\renewcommand{\algorithmiccomment}[1]{\bgroup\hfill~\texttt{/* #1 */}\egroup}
\newlist{abbrv}{itemize}{1}
\setlist[abbrv,1]{label=,labelwidth=1in,align=parleft,itemsep=0.1\baselineskip,leftmargin=!}
 \newcommand{\mytitle}{On Generalized Surrogate Duality in Mixed-Integer Nonlinear Programming}
\newcommand{\name}[1]{\mbox{#1}\xspace}
\newcommand{\scip}{\name{SCIP}}
\newcommand{\cplex}{\name{CPLEX}}
\newcommand{\cppad}{\name{CppAD}}
\newcommand{\ipopt}{\name{Ipopt}}
\newcommand{\mumps}{\name{Mumps}}
\newcommand{\bliss}{\name{bliss}}
\newcommand{\globallib}{\name{GLOBALLib}}
\newcommand{\minlplib}{\name{MINLPLib}}
\newcommand{\MINLP}{\name{MINLP}}
\newcommand{\MINLPs}{\name{MINLPs}}
\newcommand{\MILP}{\name{MILP}}
\newcommand{\MILPs}{\name{MILPs}}
\newcommand{\NLP}{\name{NLP}}
\newcommand{\LP}{\name{LP}}
\newcommand{\LPs}{\name{LPs}}
\newcommand{\RLT}{\name{RLT}}
\newcommand{\SDP}{\name{SDP}}
\newcommand{\T}{\mathsf{T}}
\newcommand{\mnorm}[1]{\left\|#1\right\|}
\newcommand{\R}{\mathbb{R}}
\newcommand{\Z}{\mathbb{Z}}
\newcommand{\N}{\mathbb{N}}
\newcommand{\dash}{---}
\newcommand{\st}{\text{s.t.}}
\newcommand{\fa}{\text{ for all }}
\newcommand{\tabledefline}[2]{\multicolumn{1}{l}{\rlap{#1\ \dash\ #2}}\\}
\definecolor{plum}{rgb}{0.8, 0.6, 0.8}
\definecolor{applegreen}{rgb}{0.55, 0.71, 0.0}
\definecolor{apricot}{rgb}{0.98, 0.81, 0.69}
\definecolor{amber}{rgb}{1.0, 0.49, 0.0}
\definecolor{americanrose}{rgb}{1.0, 0.01, 0.24}
\newcommandx{\change}[2][1=]{\todo[linecolor=americanrose,backgroundcolor=americanrose!25,bordercolor=americanrose,#1]{#2}\,}
\newcommandx{\improvement}[2][1=]{\todo[linecolor=amber,backgroundcolor=amber!25,bordercolor=amber,#1]{#2}\,}
\newcommandx{\unsure}[2][1=]{\todo[linecolor=apricot,backgroundcolor=apricot!25,bordercolor=apricot,#1]{#2}\,}
\newcommandx{\info}[2][1=]{\todo[linecolor=applegreen,backgroundcolor=applegreen!25,bordercolor=applegreen,#1]{#2}\,}
\newcommandx{\missing}[2][1=]{\todo[linecolor=blue,backgroundcolor=blue!25,bordercolor=blue,#1]{#2}\,}
\newcommand{\psd}{\succeq}
\DeclareMathOperator*{\argmin}{argmin}
\newcommand{\norm}[1]{\left\|#1\right\|}
\newcommand{\nlconssidx}{\mathcal{M}}
\newcommand{\nvars}{n}
\newcommand{\nintvars}{p}
\newcommand{\nnlconss}{m}
\newcommand{\minlpfeasset}{\mathcal{F}}
\newcommand{\minlprelaxset}{X}
\newcommand{\sfct}{S}
\newcommand{\sset}{S}
\newcommand{\gsfct}[1]{S^{#1}}
\newcommand{\gsset}[1]{S^{#1}}
\newcommand{\pointset}{\mathcal{X}}
\newcommand{\GC}{GC}
\newcommand{\colinstances}{\# instances}
\newcommand{\colcons}{cons}
\newcommand{\colobjsense}{obj}
\newcommand{\colprimal}{best primal}
\newcommand{\coldb}{DB}
\newcommand{\colmilp}{\MILP}
\newcommand{\colsurrogate}{S}
\newcommand{\coliter}{iter}
\newcommand{\colgapclosed}{gc}
\newcommand{\colrgapclosed}{rgc}
\newcommand{\colwins}{M}
\newcommand{\colloses}{L}
\newcommand{\floatsub}[2]{
    \FPsub\myres{#1}{#2}\FPeval\myres{round(myres:1)}\myres
}
\newcommand{\experimentRoot}{\texttt{ROOTGAP}}
\newcommand{\experimentAlgo}{\texttt{BENDERS}}
\newcommand{\experimentDual}{\texttt{DUALBOUND}}
\def\rootGapWorseKOneKTwo{173}
\def\rootGapWorseKTwoKThree{105}
\def\rootAllSize{633}
\def\rootGapAllKOne{18.4}
\def\rootGapAllKTwo{21.4}
\def\rootGapAllKThree{23.4}
\def\rootAffectedSize{469}
\def\rootGapAffectedKOne{35.0}
\def\rootGapAffectedKTwo{42.2}
\def\rootGapAffectedKThree{46.9}
\def\rootNconssTenSize{528}
\def\rootGapNconssTenKOne{14.6}
\def\rootGapNconssTenKTwo{16.9}
\def\rootGapNconssTenKThree{18.4}
\def\rootNconssTwentySize{391}
\def\rootGapNconssTwentyKOne{10.7}
\def\rootGapNconssTwentyKTwo{12.3}
\def\rootGapNconssTwentyKThree{13.5}
\def\rootNconssFiftySize{229}
\def\rootGapNconssFiftyKOne{7.1}
\def\rootGapNconssFiftyKTwo{7.9}
\def\rootGapNconssFiftyKThree{8.5}
\def\rootNconssTenAfSize{370}
\def\rootGapNconssTenAfKOne{30.1}
\def\rootGapNconssTenAfKTwo{36.0}
\def\rootGapNconssTenAfKThree{40.1}
\def\rootNconssTwentyAfSize{244}
\def\rootGapNconssTwentyAfKOne{26.2}
\def\rootGapNconssTwentyAfKTwo{31.5}
\def\rootGapNconssTwentyAfKThree{35.4}
\def\rootNconssFiftyAfSize{115}
\def\rootGapNconssFiftyAfKOne{23.9}
\def\rootGapNconssFiftyAfKTwo{28.0}
\def\rootGapNconssFiftyAfKThree{30.8}
\def\rootGapNotaffected{164}
\def\rootGapNoGapAllKOneKTwo{21}
\def\rootGapNoGapAllKTwoKThree{11}
\def\algoGroupSizeAll{457}
\def\algoWinsPlainAll{100}
\def\algoLosesPlainAll{36}
\def\algoRelgapPlainAll{86.9}
\def\algoWinsNotrustAll{40}
\def\algoLosesNotrustAll{31}
\def\algoRelgapNotrustAll{98.3}
\def\algoWinsNosuppAll{41}
\def\algoLosesNosuppAll{27}
\def\algoRelgapNosuppAll{98.4}
\def\algoWinsNoearlyAll{94}
\def\algoLosesNoearlyAll{40}
\def\algoRelgapNoearlyAll{88.2}
\def\algoGroupSizeNconssTen{346}
\def\algoWinsPlainNconssTen{90}
\def\algoLosesPlainNconssTen{29}
\def\algoRelgapPlainNconssTen{84.1}
\def\algoWinsNotrustNconssTen{34}
\def\algoLosesNotrustNconssTen{27}
\def\algoRelgapNotrustNconssTen{98.7}
\def\algoWinsNosuppNconssTen{38}
\def\algoLosesNosuppNconssTen{24}
\def\algoRelgapNosuppNconssTen{98.5}
\def\algoWinsNoearlyNconssTen{85}
\def\algoLosesNoearlyNconssTen{33}
\def\algoRelgapNoearlyNconssTen{85.4}
\def\algoGroupSizeNconssTwenty{222}
\def\algoWinsPlainNconssTwenty{72}
\def\algoLosesPlainNconssTwenty{10}
\def\algoRelgapPlainNconssTwenty{77.2}
\def\algoWinsNotrustNconssTwenty{25}
\def\algoLosesNotrustNconssTwenty{17}
\def\algoRelgapNotrustNconssTwenty{98.5}
\def\algoWinsNosuppNconssTwenty{32}
\def\algoLosesNosuppNconssTwenty{20}
\def\algoRelgapNosuppNconssTwenty{98.2}
\def\algoWinsNoearlyNconssTwenty{65}
\def\algoLosesNoearlyNconssTwenty{12}
\def\algoRelgapNoearlyNconssTwenty{79.7}
\def\algoGroupSizeNconssFifty{107}
\def\algoWinsPlainNconssFifty{35}
\def\algoLosesPlainNconssFifty{1}
\def\algoRelgapPlainNconssFifty{74.2}
\def\algoWinsNotrustNconssFifty{13}
\def\algoLosesNotrustNconssFifty{7}
\def\algoRelgapNotrustNconssFifty{98.9}
\def\algoWinsNosuppNconssFifty{14}
\def\algoLosesNosuppNconssFifty{12}
\def\algoRelgapNosuppNconssFifty{98.4}
\def\algoWinsNoearlyNconssFifty{32}
\def\algoLosesNoearlyNconssFifty{2}
\def\algoRelgapNoearlyNconssFifty{76.3}
 \newcommand{\algoDefault}{\texttt{DEFAULT}}
\newcommand{\algoPlain}{\texttt{PLAIN}}
\newcommand{\algoNoTR}{\texttt{NOSTAB}}
\newcommand{\algoNoSupp}{\texttt{NOSUPP}}
\newcommand{\algoNoEarly}{\texttt{NOEARLY}}
\def\treeExpNinstances{209}
\def\treeExpNinstancesBetter{53}
\def\treeExpAvgGapSCIP{284.3}
\def\treeExpAvgGapBenders{142.8}  
\newtheorem{proposition}{Proposition}
\newtheorem{theorem}{Theorem}
\newtheorem{remark}{Remark}
\newtheorem{definition}{Definition}
\newtheorem{example}{Example}
\newcommand{\myorcidlink}[1]{\,\href{https://orcid.org/#1}{\raisebox{-0.45ex}{\includegraphics[width=1.8ex]{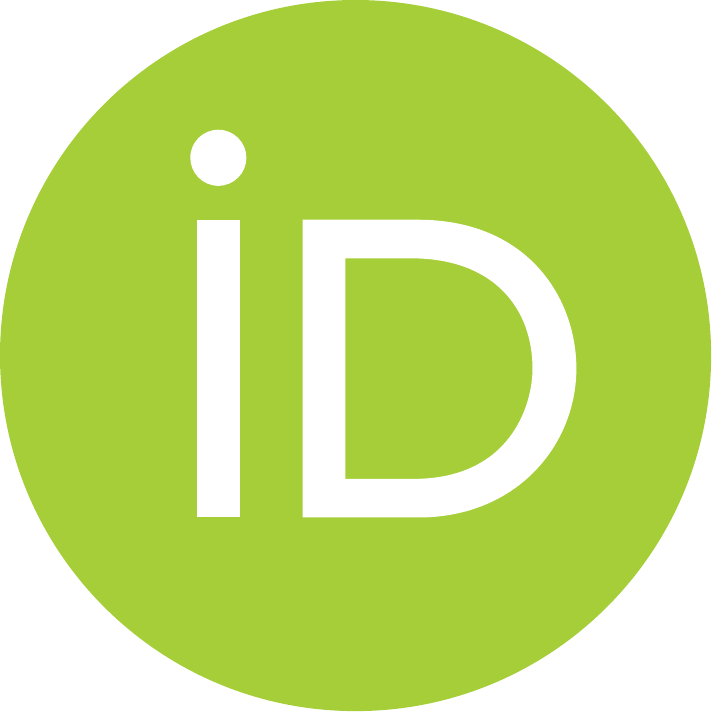}}}}
\newcommand{\myorcid}[1]{ORCID \href{https://orcid.org/#1}{#1}}
\begin{document}

\ZTPAuthor{
  Benjamin Müller\protect\myorcidlink{0000-0002-4463-2873},
  Gonzalo Mu\~{n}oz\protect\myorcidlink{0000-0002-9003-441X},
  Maxime Gasse\protect\myorcidlink{0000-0001-6982-062X},
  Ambros Gleixner\protect\myorcidlink{0000-0003-0391-5903},
  Andrea Lodi\protect\myorcidlink{0000-0001-9269-633X},
  Felipe Serrano\protect\myorcidlink{0000-0002-7892-3951}
}

\ZTPTitle{\bf \mytitle}
\ZTPInfo{This work has been supported by the Research Campus MODAL
  \emph{Mathematical Optimization and Data Analysis Laboratories} funded by the
  Federal Ministry of Education and Research (BMBF Grant~05M14ZAM).  All
  responsibility for the content of this publication is assumed by the authors.}
\ZTPNumber{19-55}
\ZTPMonth{December}
\ZTPYear{2019}

\title{\bf \mytitle}

\author{Benjamin Müller%
  \thanks{\itshape Zuse Institute Berlin, Berlin, Germany,
    E-mail \nolinkurl{benjamin.mueller@zib.de}, \newline\myorcid{0000-0002-4463-2873}\vspace{0.5ex}}
  \and
  Gonzalo Mu\~{n}oz%
  \thanks{\itshape Universidad de O’Higgins, Rancagua, Chile,
    E-mail \nolinkurl{gonzalo.munoz@uoh.cl}, \newline\myorcid{0000-0002-9003-441X}\vspace{0.5ex}}
  \and
  Maxime Gasse%
  \thanks{\itshape Polytechnique Montr\'{e}al, Montr\'{e}al, Canada,
    E-mail \nolinkurl{maxime.gasse@polymtl.ca}, \newline\myorcid{0000-0001-6982-062X}\vspace{0.5ex}}
  \and
  Ambros Gleixner%
  \thanks{\itshape Zuse Institute Berlin, Berlin, Germany,
    E-mail \nolinkurl{gleixner@zib.de}, \newline\myorcid{0000-0003-0391-5903}\vspace{0.5ex}}
  \and
  Andrea Lodi%
  \thanks{\itshape Polytechnique Montr\'{e}al, Montr\'{e}al, Canada,
    E-mail \nolinkurl{andrea.lodi@polymtl.ca}, \newline\myorcid{0000-0001-9269-633X}\vspace{0.5ex}}
  \and
  Felipe Serrano%
  \thanks{\itshape Zuse Institute Berlin, Berlin, Germany,
    E-mail \nolinkurl{serrano@zib.de}, \newline\myorcid{0000-0002-7892-3951}}
}


\hypersetup{pdftitle={\mytitle},
  pdfauthor={Benjamin Müller, Gonzalo Mu\~{n}oz, Maxime Gasse, Ambros Gleixner, Andrea Lodi, Felipe Serrano}}

\zibtitlepage
\maketitle

\begin{abstract}
  
The most important ingredient for solving mixed-integer nonlinear programs (\MINLPs) to global $\epsilon$-optimality
with spatial branch and bound is a tight, computationally tractable relaxation. Due to both theoretical and practical
considerations, relaxations of \MINLPs are usually required to be convex. Nonetheless, current optimization solver can often
successfully handle a moderate presence of nonconvexities, which opens the door for the use of potentially tighter
nonconvex relaxations.
In this work, we exploit this fact and make use of a nonconvex relaxation obtained via aggregation of constraints: a
\emph{surrogate} relaxation. These relaxations were actively studied for linear integer programs in the 70s and 80s, but
they have been scarcely considered since. We revisit these relaxations in an \MINLP setting and show
the computational benefits and challenges they can have. Additionally, we study a generalization of such
relaxation that allows for multiple aggregations simultaneously and present the first algorithm that is capable of
computing the best set of aggregations. We propose a multitude of computational enhancements for improving its practical
performance and evaluate the algorithm's ability to generate strong dual bounds through extensive computational
experiments.

 \end{abstract}

\section{Introduction}
\label{section:intro}

We consider a mixed-integer nonlinear program (\MINLP) of the form
\begin{equation}
\label{eq:minlp}
  \min_{x \in \minlprelaxset} \; \left\{ c^\T x \mid g_i(x) \le 0 \fa i \in \nlconssidx \right\},
\end{equation}
where~$\minlprelaxset := \{x \in \R^{\nvars-\nintvars} \times \Z^{\nintvars} \mid Ax \le b\}$ is a compact mixed-integer linear set, each
$g_i :\R^\nvars \to \R$ is a factorable continuous function~\cite{McCormick1976}, and $\nlconssidx := \{1,\ldots,\nnlconss\}$ denotes the index
set of nonlinear constraints. Such a problem is called \emph{nonconvex} if at least one $g_i$ is nonconvex, and \emph{convex} otherwise. Many real-world applications
are inherently nonlinear, and can be formulated as a~\MINLP. See, e.g.,~\cite{GrossmanSahinidis2002} for an overview.

The state-of-the-art algorithm for solving nonconvex \MINLPs to $\epsilon$-global optimality is spatial
branch and bound, see, e.g.,~\cite{HorstTuy1996,QuesadaGrossmann1995,RyooSahinidis1995}. The performance of spatial
branch and bound mainly depends on the tightness of the relaxations used, which are typically convex relaxations
constructed from the convexification of the nonlinear and integrality constraints. These convex relaxations are refined by branching,
cutting planes, and variable bound tightening, e.g., feasibility- and optimality-based bound
tightening~\cite{QuesadaGrossmann1993,BelottiCafieriLeeLiberti2012TR}.
As a result of the rapid progress during the last decades, current solvers can often handle a moderate presence of nonconvex constraints
efficiently. This opens the door for a practical use of potentially  tighter \emph{nonconvex} relaxations in \MINLP solvers.
One example are \MILP relaxations, see~\cite{Zhou2017,Misener2014,Burlacu2019}.
In this paper we go one step further and explore the concept of \emph{surrogate relaxations}~\cite{Glover1965}.
Even more aggressively, this leads to \MINLP relaxations that may contain both discrete and continuous nonlinearities.

\begin{definition}[Surrogate relaxation]
For a given $\lambda \in \R^\nnlconss_+$, we call the optimization problem
\begin{equation}
\label{eq:surrogate:opt}
\sfct(\lambda) := \min_{x \in \minlprelaxset} \; \left\{ c^\T x \mid \sum_{i \in \nlconssidx} \lambda_i g_i(x) \le 0 \right\}
\end{equation}
a \emph{surrogate relaxation} of~\eqref{eq:minlp}.
\end{definition}

Consider $\minlpfeasset \subseteq \R^\nvars$ the feasible region of~\eqref{eq:minlp}, and $\sset_\lambda \subseteq \R^\nvars$
that of~\eqref{eq:surrogate:opt}. Throughout the whole paper we assume that $\minlpfeasset$ is not empty. Clearly,
$\minlpfeasset \subseteq S_\lambda$ holds for every $\lambda \in \R^\nnlconss_+$, and as such~\eqref{eq:surrogate:opt}
provides a valid lower bound of~\eqref{eq:minlp}.
Moreover, solving~\eqref{eq:surrogate:opt} might be computationally more
convenient than solving the original problem~\eqref{eq:minlp}, since there is
only one nonconvex constraint in $\sfct(\lambda)$.
Note that one may turn $\sset_\lambda$ into a continuous relaxation of~\eqref{eq:minlp} by removing
the integrality restrictions. However, in this work we purposely choose to retain integrality in the relaxation
and compare directly to the optimal value of~\eqref{eq:minlp} as opposed to its continuous relaxation.

The quality of the bound provided by $\sfct(\lambda)$ may be highly
dependent on the value of $\lambda$, and therefore it is natural to
consider the \emph{surrogate dual} problem in order to obtain the tightest
surrogate relaxation.
\begin{definition}[Surrogate dual]
We call the optimization problem
\begin{equation}
\label{eq:surrogate:dual}
  \sup_{\lambda \in \R^\nnlconss_+} \; \sfct(\lambda) \\
\end{equation}
the \emph{surrogate dual} of~\eqref{eq:minlp}.
\end{definition}
\noindent The function $\sfct$ is lower semi-continuous~\cite{Greenberg1970}, which means that for every
sequence $\{\lambda^t\}_{t \in \N} \subseteq \R^\nnlconss_+$ with $\lim_{t \rightarrow \infty} \lambda^t = \lambda^*$, the inequality
\begin{equation*}
  \sfct(\lambda^*) \le \liminf_{t \rightarrow \infty} \sfct(\lambda^t)
\end{equation*}
holds. Thus, there might be no $\lambda \in \R^\nnlconss_+$
such that $\sfct(\lambda)$ is equal to~\eqref{eq:surrogate:dual}, as can be observed in Figure~\ref{fig:surrogate:plot}. Therefore,
it is not possible to replace the supremum in~\eqref{eq:surrogate:dual} by a maximum.

The surrogate dual is closely related to the well-known \emph{Lagrangian dual} $ \max_{\lambda \in \R^\nnlconss_+} L(\lambda)$, where
\begin{equation}
 L(\lambda) := \min_{x \in \minlprelaxset} \left\{ c^\T x + \sum_{i \in \nlconssidx} \lambda_i g_i(x) \right\}\text{,}
\end{equation}
but always results in a bound that is at least as good as the Lagrangian one~\cite{Greenberg1970,Karwan1979}, i.e.,
\begin{align}
 \sfct(\lambda) \ge L(\lambda) & \quad \fa \lambda \in \R^\nnlconss_+.
\end{align}

Figure~\ref{fig:surrogate:plot} shows the difference between $\sfct(\lambda)$ and $L(\lambda)$ on the two-dimensional
instance of Example~\ref{example:xy}. In contrast to $L : \R^\nnlconss_+ \rightarrow \R$, which is a continuous and concave
function, $\sfct : \R^\nnlconss_+ \rightarrow \R$ is only quasi-concave \cite{Greenberg1970} (i.e., the set
$\{\lambda \in \R^\nnlconss_+ \mid \sfct(\lambda) \geq \alpha\}$ is convex for all $\alpha$) and in general is discontinuous.
As it can be seen in Figure~\ref{fig:surrogate:plot}, the main difficulty in optimizing $\sfct(\lambda)$ for nonconvex \MINLPs is that
the function is most of the time ``flat'', meaning that it leads to nontrivial dual bounds for only a small subset of the $\lambda$-space.
To the best of our knowledge, this aspect has not received much attention in the development of algorithms that solve~\eqref{eq:surrogate:dual}
for general \MINLPs.

\begin{example} \normalfont \label{example:xy}
Consider the following nonconvex problem
\begin{equation*}
  \begin{aligned}
    \min \quad & -y \\
    \st \quad  & 2xy + x^2 - y^2 - x \le 0  , \\
               & -xy - 0.3x^2 - 0.2y^2 - 0.5x + 1.5y \le 0  , \\
               & (x,y)\in [0, 1]^2  ,
  \end{aligned}
\end{equation*}
which attains its optimal value $-0.37$ at $(x^*,y^*) \approx (0.52,0.37)$. The surrogate dual problem reads as
\begin{equation*}
  \sup_{(\lambda_1,\lambda_2) \in \R_+^2} \left\{\begin{aligned}
  \min \quad & -y \\
  \st \quad  & \lambda_1 (2xy + x^2 - y^2 - x) \\ & + \lambda_2 (-xy - 0.3x^2 - 0.2y^2 - 0.5x + 1.5y) \le 0 \\
             & (x,y)\in [0, 1]^2
  \end{aligned}
  \right\}  ,
\end{equation*}
whereas the Lagrangian dual problem is
\begin{equation*}
  \sup_{(\lambda_1,\lambda_2) \in \R_+^2} \left\{\begin{aligned}
  \min \quad & -y + \lambda_1 (2xy + x^2 - y^2 - x) \\ & + \lambda_2 (-xy - 0.3x^2 - 0.2y^2 - 0.5x + 1.5y) \\
  \st \quad  & (x,y)\in [0, 1]^2
  \end{aligned}
  \right\} .
\end{equation*}
Here the optimal solution value of the surrogate dual is $\sfct(0.56,0.44) \approx -0.38$, which is stronger
than that of the Lagrangian dual $L(0.67,0.82) \approx -0.78$. Note that for this
problem neither the surrogate nor the Lagrangian dual proves global optimality of $(x^*,y^*)$.
\end{example}

\begin{figure}[tb]
 \begin{center}
  \hspace{10ex}\includegraphics[width=0.55\textwidth]{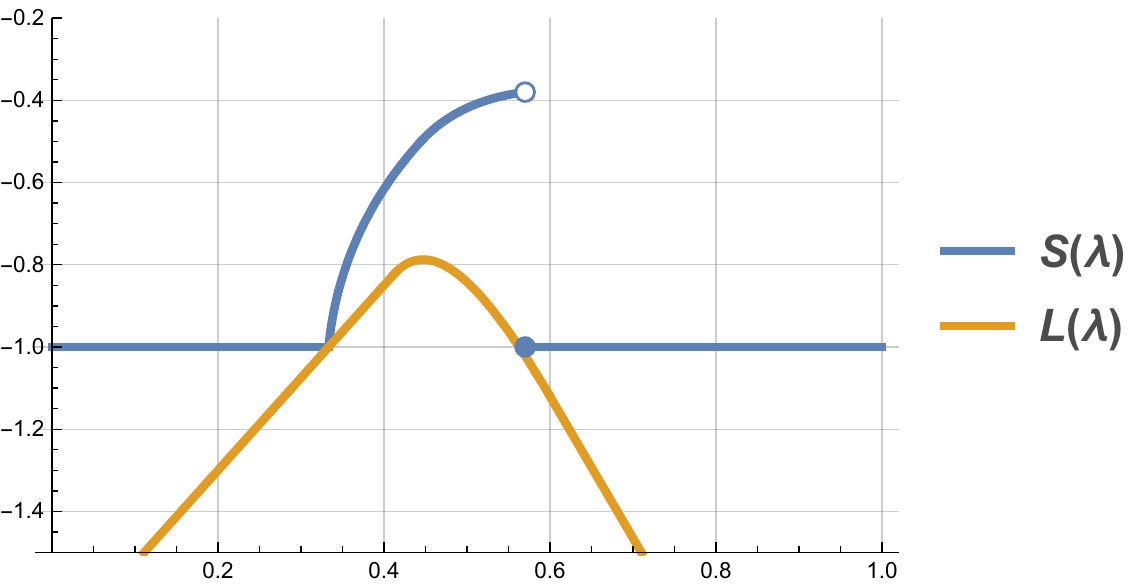}
     \caption{Plot of the surrogate and Lagrangian relaxations values for the \MINLP detailed in Example~\ref{example:xy}.
     Note that for display purposes we plot both relaxations with respect to
     $\lambda_1$ only. Since the surrogate relaxation is invariant to scaling, we can add a normalization constraint
     $\mnorm{\lambda}_1 = \alpha$ such that $\lambda_2 = \alpha - \lambda_1$ is not a free parameter any more. Additionally,
     we choose $\alpha$ such that $\max_\lambda L(\lambda)$ is attained.}
  \label{fig:surrogate:plot}
 \end{center}
\end{figure}

\paragraph{Contribution.}

In this paper, we revisit surrogate duality in the context of mixed-integer nonlinear programming. To the best of our knowledge,
surrogate relaxations have never been considered in practice for solving general \MINLPs.

The first contribution of the paper is an experimental study of a generalization of surrogate duality in the nonconvex
setting that allows for multiple aggregations of the nonlinear constraints. Second, based on a row-generation method, we present the first algorithm
to solve the corresponding generalized surrogate dual problem and prove its convergence.
Third, we present several computational enhancements to make the algorithm practical, which includes an effective way to integrate a \MINLP solver
into our algorithm. Our developed algorithm shows that the quality of the generalized surrogate relaxation can be significantly stronger than that of the
classic one. Finally, we provide a detailed computational analysis on publicly available benchmark instances.

\paragraph{Structure.}

The rest of the paper is organized as follows. In Section~\ref{section:background}, we present a literature review of
surrogate duality.
Section~\ref{section:simpledual} discusses an algorithm from the literature for solving the classic surrogate dual
problem and our new computational enhancements.
In Section~\ref{section:gendual}, we review a generalization of surrogate relaxations from the literature.
Afterwards, in Section~\ref{section:generalalgo}, we adapt an algorithm for the classic surrogate dual problem to the
general case and prove its convergence.
An exhaustive computational study using the \MINLP solver \scip on publicly available benchmark instances
is given in Section~\ref{section:experiments}.
Afterwards, Section~\ref{section:bandb} presents ideas for future work that exploits surrogate relaxations
in the tree search of spatial branch and bound.
Section~\ref{section:conclusion} presents concluding remarks.

\section{Background}
\label{section:background}

Surrogate constraints were first introduced by Glover~\cite{Glover1965} in the context of zero-one linear integer
programming problems. He defined the \emph{strength} of a surrogate constraint according to the dual bound achieved by
it ---the same notion we use in \eqref{eq:surrogate:dual} and throughout our work. He also showed how to obtain the best
multipliers for~\eqref{eq:surrogate:dual} in the case of two inequalities. Balas \cite{Balas1967} and Geoffrion
\cite{Geoffrion1967} extended the use of surrogate relaxations in zero-one linear programming. Their definitions
of strength of a surrogate relaxation, however, differed from that of Glover. Furthermore, their notions of strength
ignored integrality conditions. This allowed them to compute the best surrogate relaxation using a linear program. Later
on, Glover \cite{Glover1968} provided a unified view on the aforementioned approaches to surrogate relaxations
and proposed a generalization where only a subset of constraints are used for producing an aggregation, leaving the rest
explicitly enforced by the surrogate relaxation. We consider this variant via the set $\minlprelaxset$ in
\eqref{eq:surrogate:opt}.

A theoretical analysis of surrogate duality in a nonlinear setting was presented by Greenberg and
Pierskalla~\cite{Greenberg1970}. They showed that finding the best multipliers amounts to optimizing a quasi-concave and
in general discontinuous function and that the surrogate dual problem is at least as strong as the Lagrangian dual. They
also proposed a generalization using multiple disjoint aggregation constraints. A similar generalization allowing
multiple aggregations was later studied by Glover \cite{Glover1975} along with the \emph{composite dual}: a combination
of surrogate and Lagrangian relaxations. These generalizations were proposed without a computational evaluation.

Regarding the link between surrogate and Lagrangian duality, Karwan and Rardin~\cite{Karwan1979} presented necessary conditions
for having no gap between the Lagrangian and surrogate duals. They also gave empirical evidence on why having no such
gap is unlikely. As for the duality gap provided by the surrogate dual, and much like in Lagrangian duality, conditions
that ensure that the surrogate dual equals the optimal solution value (e.g., constraint qualification conditions) were
exhaustively studied, see \cite{Glover1975,Penot2004,Suzuki2011} and the references therein.

The first algorithmic method for finding the optimal value of~\eqref{eq:surrogate:dual} is attributed to
Banerjee~\cite{Banerjee1971}. In the context of integer linear programming, he proposed a Benders-type approach that
alternates between solving a linear program (the \emph{master} problem) and an integer linear program with a single
constraint (the \emph{sub-problem}). This approach is the one considered by us, which we describe in full detail in
Section \ref{section:simpledual} adapted to the \MINLP context, along with its convergence
guarantees. Karwan~\cite{Karwan1976} expanded on this approach, including a refinement of that of Banerjee and
subgradient-based methods. Independently, Dyer~\cite{Dyer1980} proposed similar methods to those of Karwan. Karwan and
Rardin~\cite{Karwan1979} argued in favor of Benders-based approaches for the search of multipliers, as opposed to
subgradient methods, by showing that a subgradient may not provide an ascent direction for the surrogate
dual. Nonetheless, a subgradient-like search procedure was proposed by Karwan and Rardin~\cite{Karwan1984} with
positive results in packing problems. The latter search method may also be viewed as a variant of the Benders approach
of Banerjee, with the LP master problem being replaced by a computationally more efficient multiplier update. Sarin et
al.~\cite{Sarin1987} then proposed a different multiplier search procedure based on consecutive Lagrangian dual searches
and tested it on randomly generated packing problems. Gavish and Pirkul~\cite{Gavish1985} proposed a heuristic
to find useful multipliers based on a sequential search over each multiplier separately while keeping the others
constant.  They presented computational experiments for their heuristic on packing instances as well. Kim and
Kim~\cite{Kim1998} built upon the approach by Sarin et al., and developed a more efficient exact algorithm for
finding the optimal multipliers. However, the guarantees of the latter hold only when the feasible set is finite.

From a different perspective, Karwan and Rardin~\cite{Karwan1981} described the interplay between the branch-and-bound
trees of an integer programming problem and its surrogate relaxations, to efficiently incorporate surrogate duals in
branch and bound. Later on, Sarin et al.~\cite{Sarin1988} showed how to integrate their Lagrangian-based multiplier
search proposed in~\cite{Sarin1987} into branch and bound.

From an application point of view, surrogate constraints were used in various ways. In~\cite{Glover1977}, Glover
presented a class of surrogate constraint primal heuristics for integer programming problems. Djerdjour et
al.~\cite{Djerdjour1988} presented a surrogate relaxation-based algorithm for knapsack problems with a quadratic
objective function. Fisher et al.~\cite{Fisher1983} used surrogate relaxations to construct algorithms that improve the
dual and primal bounds for the job shop problem. Narciso and Lorena~\cite{Narciso1999} used a surrogate
relaxation approach for tackling generalized assignment problems. We refer the reader to
\cite{Glover2003,Alidaee2014} for reviews on surrogate duality methods, including other applications and
alternative methods for generating surrogate constraints not based on aggregations.

To the best of our knowledge, the efforts for practical implementations of multiplier search methods have mainly focused
on \emph{linear} integer programs. This can be explained by the maturity of the computational optimization tools
available at the time most of these implementations were developed. We are only aware of two exceptions. First, the
entropy approach to nonlinear programming (see \cite{Templeman1987,Xingsi1991}) which uses a single
aggregation-based constraint to tackle nonlinear problems, but uses an entropy-based reformulation instead of a
weighted sum of the constraints. And second, the work by Nakagawa~\cite{Nakagawa2003} who considered
\emph{separable} nonlinear integer programming and presented a novel algorithm for solving the surrogate dual. However,
the author's approach is tailored for a limited family of nonlinear problems. Additionally, the algorithm relies
on performing, at each step, a potentially expensive enumerative procedure.

Regarding the generalization of the surrogate dual which considers multiple aggregated constraints (discussed in detail
in Section \ref{section:gendual}), we are not aware of any work considering a multiplier search method with provable
guarantees or a computational implementation of a heuristic approach for it. We are only aware of the discussion by
Karwan and Rardin~\cite{Karwan1980} regarding the searchability of multipliers for the surrogate dual generalizations
proposed by Greenberg and Pierskalla~\cite{Greenberg1970} and Glover \cite{Glover1975}. They argued that the lack of
desirable structures (such as quasi-concavity) may impair search procedures which are directly based on the original
surrogate dual. They showed, however, that simple heuristics can perform empirically well; although only for the
composite dual. The target of Section \ref{section:generalalgo} is to show that a Benders approach for the case of
multiple aggregations can also be used. Moreover, we prove that such an approach has similar convergence guarantees to
those of the single-aggregation surrogate dual.

\section{Surrogate duality in \MINLPs}
\label{section:simpledual}

While surrogate duality in its broader definition can be applied in theory to any \MINLP, to the best of our
knowledge, only mixed-integer \emph{linear} programming problems have been considered for practical applications. Much less
attention has been given to the general \MINLP case, due to the potential nonconvexity of the resulting problems.
Figure~\ref{fig:surrogate:example} illustrates the possible drawbacks and benefits of a nonconvex surrogate
relaxation, namely, potentially tight relaxations and potentially convex (Figure~\ref{fig:surrogate:example:3}),
nonconvex (Figure~\ref{fig:surrogate:example:1} and~\ref{fig:surrogate:example:4}) and even disconnected
(Figure~\ref{fig:surrogate:example:2}) feasible regions.

\begin{figure}[t]
 \begin{center}
  \begin{minipage}[t]{0.23\textwidth}
   \centering
   \includegraphics[width=1\textwidth]{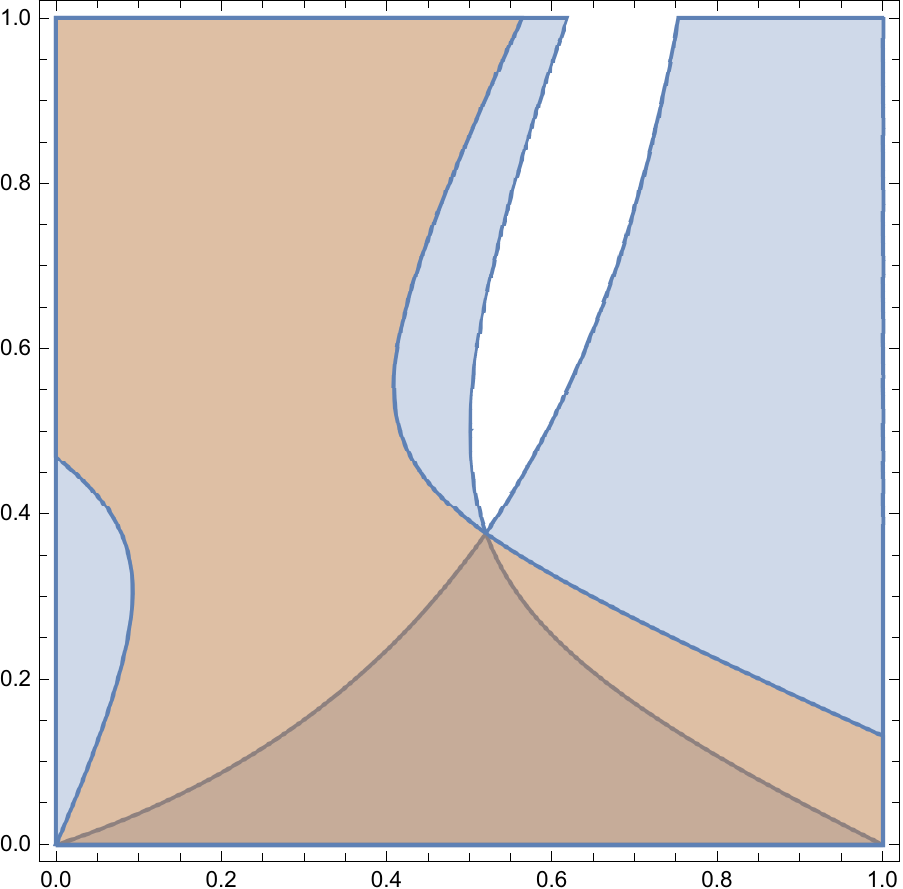}
   $\lambda = (\frac{3}{4},\frac{1}{4})$
   \subcaption{$\sset_{\lambda}$ nonconvex}
   \label{fig:surrogate:example:1}
  \end{minipage}
  \hspace{0.5ex}
  \begin{minipage}[t]{0.23\textwidth}
   \centering
   \includegraphics[width=1\textwidth]{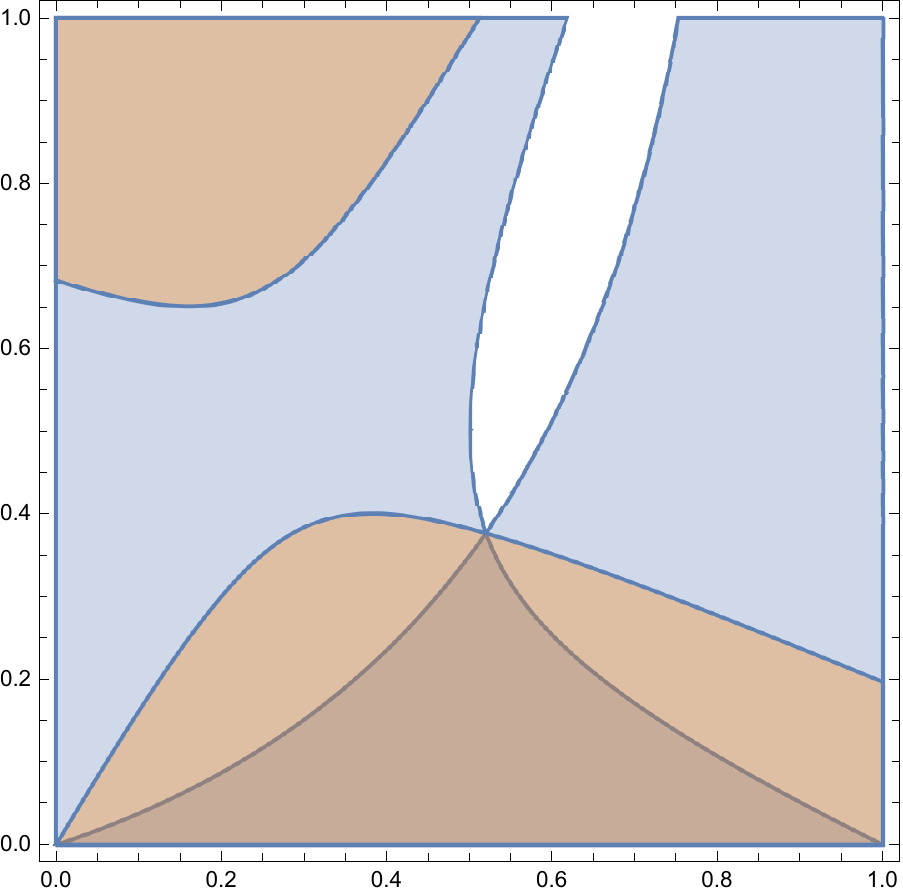}
   $\lambda = (\frac{2}{3},\frac{1}{3})$
   \subcaption{$\sset_{\lambda}$ disconnected}
   \label{fig:surrogate:example:2}
  \end{minipage}
  \hspace{0.5ex}
  \begin{minipage}[t]{0.23\textwidth}
   \centering
   \includegraphics[width=1\textwidth]{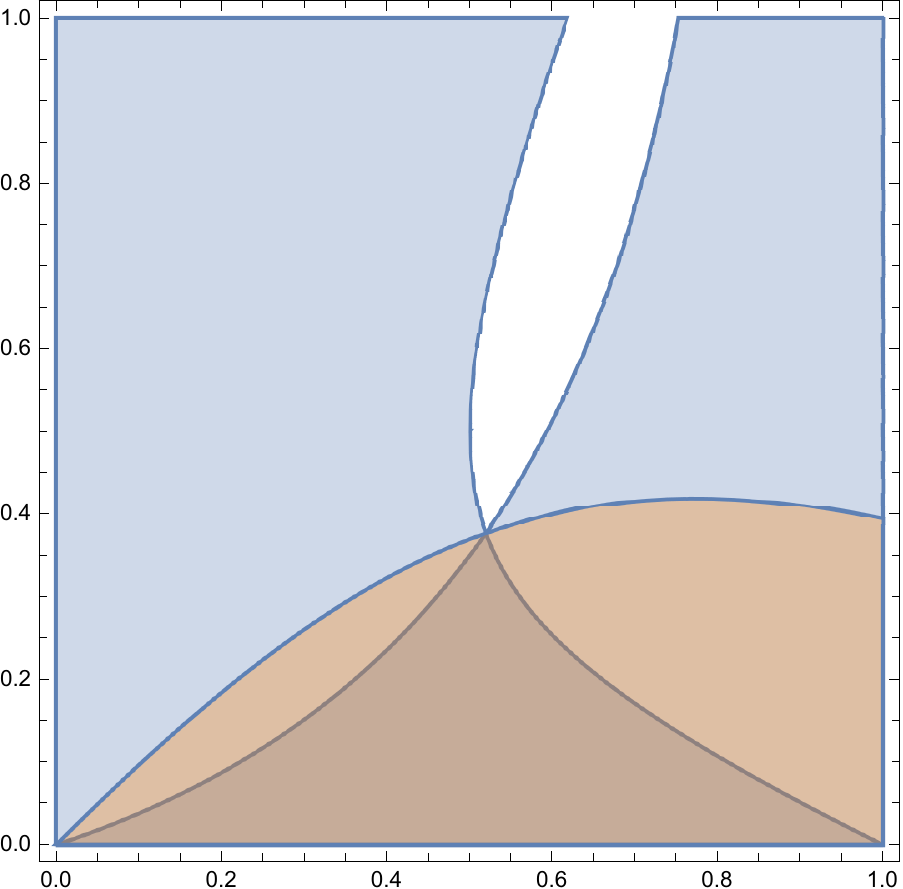}
   $\lambda = (\frac{1}{2},\frac{1}{2})$
   \subcaption{$\sset_{\lambda}$ convex}
   \label{fig:surrogate:example:3}
  \end{minipage}
  \hspace{0.5ex}
  \begin{minipage}[t]{0.23\textwidth}
   \centering
   \includegraphics[width=1\textwidth]{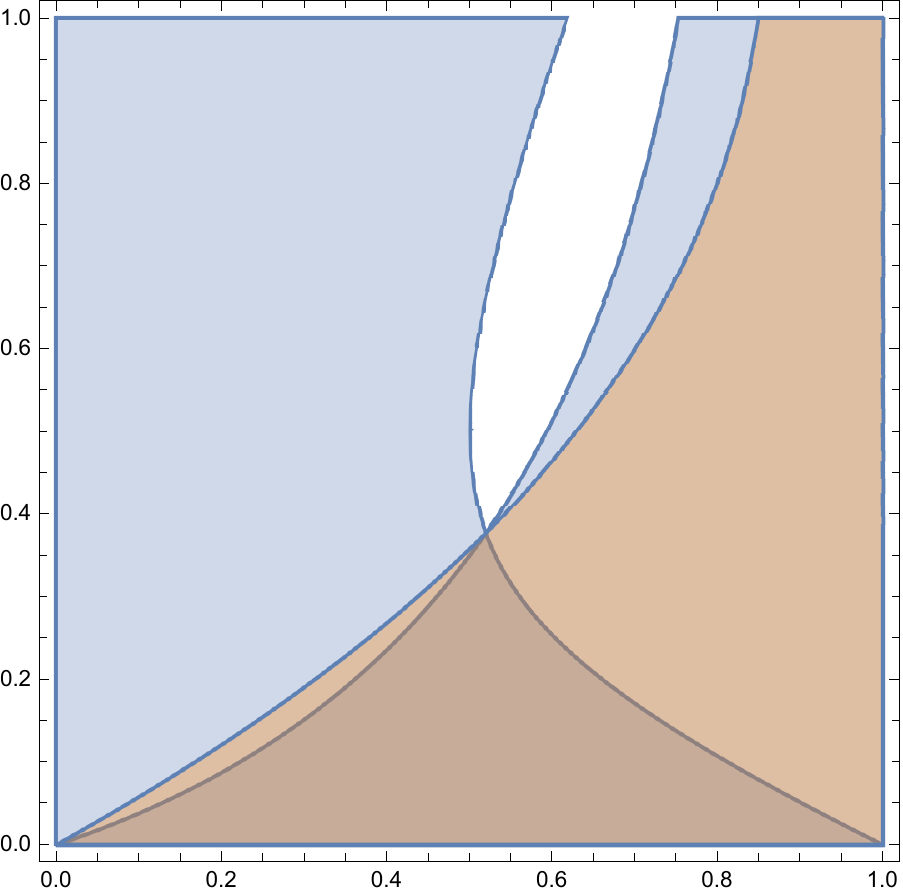}
   $\lambda = (\frac{1}{4},\frac{3}{4})$
   \subcaption{$\sset_{\lambda}$ complement of a convex set}
   \label{fig:surrogate:example:4}
  \end{minipage}
 \end{center}
 \caption{Different surrogate relaxations $\sset_{\lambda}$ for the nonconvex optimization problem in Example~\ref{example:xy}. The feasible region
  defined by each nonlinear constraint is blue and the set $\sset_{\lambda}$ is orange. Depending on the choice of $\lambda$, $\sset_{\lambda}$
  might be nonconvex, disconnected, convex, or reverse convex.}
 \label{fig:surrogate:example}
\end{figure}

We investigate the trade-off between the computational effort required to solve surrogate
relaxations and the quality of the resulting dual bounds. In this section, we show how to overcome the computational
difficulties faced when solving the surrogate dual with a Benders-type algorithm. This type of algorithm was presented
independently by Banerjee~\cite{Banerjee1971}, Karwan~\cite{Karwan1976}, and Dyer~\cite{Dyer1980}.

As we mentioned in Section \ref{section:background}, other algorithms for solving the surrogate dual exist, such as subgradient-based
algorithms~\cite{Karwan1976,Dyer1980,Sarin1987}. However, we use the Benders-type approach because its extension to the generalized surrogate dual problem (which we
discuss in Section~\ref{section:gendual}) is straightforward. It is unclear whether the subgradient-based algorithms
can be extended to work for the generalization, and if their convergence guarantees can be carried over.

\subsection{Solving the surrogate dual via Benders} \label{section:Benders}

In order to solve~\eqref{eq:surrogate:dual} (or at least find a good $\lambda$ multiplier),
we follow a known Benders-type algorithm, see~\cite{Karwan1976,Dyer1980}, which we
review here. The Benders algorithm is an iterative approach that alternates between solving a, so-called, master- and
sub-problem. The master problem searches for the next $\lambda$ aggregation and the sub-problem solves
$\sfct(\lambda)$. Note that the value of an optimal solution $\bar x$ of $\sfct(\lambda)$, i.e., $c^\T \bar x$, is a
valid dual bound for~\eqref{eq:minlp}. To ensure that the point $\bar x$ is not considered in later iterations, i.e.,
$\bar x \not\in \sset_\lambda$, the Benders algorithm uses the
master problem to compute a new vector $\lambda$ that ensures $\sum_{i \in \nlconssidx} \lambda_i g_i(\bar x) > 0$. This
can be done by maximizing constraint violation. More precisely, given $\pointset \subset \R^\nvars$ the set of
previously generated points of the sub-problems, the master problem reads as
\begin{equation}\label{eq:benders:master}
  \begin{aligned}
   \max \; & \Psi \\ \st & \sum_{i \in \nlconssidx} \lambda_i g_i(\bar x) \ge \Psi & & \fa \bar x \in \pointset, \\
   & \norm{\lambda}_1 \le 1, \\
   & \lambda \in \R^\nnlconss_+, \\
   & \Psi \in \R_+ .
  \end{aligned}
 \end{equation}
Due to the fact that each aggregation constraint is scaling invariant, it is necessary to add a
normalization, e.g., $\mnorm{\lambda}_1 \le 1$, to the master problem.
The resulting scheme, formalized in Algorithm~\ref{algo:benders}, terminates once the solution value
of~\eqref{eq:benders:master} is smaller than a fixed value $\epsilon > 0$. An illustration of the algorithm
for the nonconvex problem in Example~\ref{example:xy} is given in Figure~\ref{fig:benders}.

\begin{algorithm}[bt]
  \caption{Benders algorithm for the surrogate dual.}
  \label{algo:benders}
   \begin{algorithmic}[1]
   \REQUIRE{\MINLP of the form~\eqref{eq:minlp}, threshold $\epsilon > 0$}
   \ENSURE{optimal value $D \in \R$ of the surrogate dual}
   \STATE{initialize $\lambda \leftarrow 0 \in \R^{\nnlconss}_+$, $\Psi \leftarrow \infty$, $\pointset \leftarrow \emptyset$, $D \leftarrow -\infty$}
   \WHILE{$\Psi \ge \epsilon$}
     \STATE{$\bar x \leftarrow \argmin_x \{ c^\T x \mid x \in \sset_\lambda\}$ \label{step:subprobSol}}
     \STATE{$D \leftarrow \max \{ D,c^\T \bar x \}$ \label{step:dualBound}}
     \STATE{$\pointset \leftarrow \pointset \cup \{\bar x\}$}
     \STATE{$(\lambda,\Psi) \leftarrow $ optimal solution of~\eqref{eq:benders:master} for $\pointset$ \label{step:masterSol}}
   \ENDWHILE
   \RETURN{$D$}
   \end{algorithmic}
 \end{algorithm}

\begin{remark}
  Instead of finding an aggregation vector that maximizes the violation of all points in $\pointset$,
  Dyer~\cite{Dyer1980} uses an interior point for the polytope that is given by the so far found inequalities. This can
  be achieved by scaling $\Psi$ in each constraint of~\eqref{eq:benders:master} depending on the values $g_i(\bar x)$
  for each $i \in \nlconssidx$. In our experiments, however, we have observed that maximizing the violation significantly
  improved the quality of the computed dual bounds.
 \end{remark}

Although originally proposed for linear integer programming problems, Algorithm~\ref{algo:benders} can be attributed to
Banerjee~\cite{Banerjee1971}. Using his analysis, Karwan~\cite{Karwan1976} proved the following theorem for the
case of linear constraints.

\begin{theorem}\label{theorem:convergence}
 Denote by $\{(\lambda^t, \Psi^t)\}_{t \in \N}$ the sequence of values obtained in Step~\ref{step:masterSol} of Algorithm~\ref{algo:benders}
 for $\epsilon = 0$. If all $g_i$ are linear for all $i \in \nlconssidx$ then Algorithm~\ref{algo:benders} either
 \begin{itemize}
  \item terminates in $T$ steps, in which case $\max_{1 \le t \le T} \sfct(\lambda^t)$ is equal to~\eqref{eq:surrogate:dual}, or
  \item the sequence $\{\sfct(\lambda^t)\}_{t \in \N}$ has a sub-sequence converging to~\eqref{eq:surrogate:dual}.
 \end{itemize}
\end{theorem}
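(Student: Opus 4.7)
The plan is to split into two cases according to whether the non-increasing sequence $\{\Psi^t\}_{t \in \N}$ eventually reaches zero at some iteration $T$ or stays strictly positive forever, and in both cases rely on two simple observations. First, if $\bar x \in \pointset$ and $\lambda \in \R^\nnlconss_+$ satisfies $\sum_{i \in \nlconssidx} \lambda_i g_i(\bar x) \le 0$, then $\bar x$ is feasible for~\eqref{eq:surrogate:opt} and hence $\sfct(\lambda) \le c^\T \bar x$. Second, $\{\Psi^t\}_{t \in \N}$ is non-increasing (each master problem~\eqref{eq:benders:master} only accumulates constraints) and bounded below by $0$ by the constraint $\Psi \in \R_+$, so it converges to a limit $\Psi^\infty \ge 0$.

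In the termination case, $\Psi^T = 0$. Writing $\Delta := \{\lambda \in \R^\nnlconss_+ \mid \mnorm{\lambda}_1 \le 1\}$, this means that every $\lambda \in \Delta$ admits some $\bar x^s \in \pointset$ with $\sum_{i \in \nlconssidx} \lambda_i g_i(\bar x^s) \le 0$, and the first observation gives $\sfct(\lambda) \le c^\T \bar x^s \le D$. Since $\sfct$ is invariant under positive scaling, the supremum over $\R^\nnlconss_+$ reduces to the supremum over $\Delta$, so $D$ equals the value of the surrogate dual.

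In the infinite case, I would use compactness of $\Delta$ and $\minlprelaxset$ to extract subsequences along which $\lambda^{t_k} \to \lambda^*$ and $\bar x^{t_k} \to x^*$. An iterated-limit argument applied to the master constraints $\sum_{i \in \nlconssidx} \lambda_i^{t_k} g_i(\bar x^{t_j}) \ge \Psi^{t_k}$ (valid for $t_j < t_k$), taking first $k \to \infty$ and then $j \to \infty$ together with continuity of the $g_i$, gives $\sum_{i \in \nlconssidx} \lambda_i^* g_i(x^*) \ge \Psi^\infty$. Passing to the limit in the subproblem feasibility $\sum_{i \in \nlconssidx} \lambda_i^{t_k} g_i(\bar x^{t_k}) \le 0$ yields the reverse inequality $\sum_{i \in \nlconssidx} \lambda_i^* g_i(x^*) \le 0$, forcing $\Psi^\infty = 0$. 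Now for any fixed $\lambda \in \Delta$ with $\mnorm{\lambda}_1 = 1$, its feasibility in every master problem implies $\min_{s \le t} \sum_{i \in \nlconssidx} \lambda_i g_i(\bar x^s) \le \Psi^{t+1} \to 0$: either this infimum is attained at some index $s$, giving $\sfct(\lambda) \le c^\T \bar x^s$ by the first observation, or a further subsequence $\{\bar x^{s_k}\}$ converges to some $\tilde x \in \minlprelaxset$ with $\sum_{i \in \nlconssidx} \lambda_i g_i(\tilde x) = 0$ by continuity, yielding $\sfct(\lambda) \le c^\T \tilde x \le \limsup_t c^\T \bar x^t$. Taking the supremum over $\lambda$ and using $\sfct(\lambda^t) = c^\T \bar x^t$ then shows that $\limsup_t \sfct(\lambda^t)$ equals the value of the surrogate dual, producing the desired convergent subsequence.

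I expect the main obstacle to be the iterated limit used to conclude $\Psi^\infty = 0$: the indices of $\lambda^{t_k}$ and of the iterate $\bar x^{t_j}$ fed into the aggregation must be ordered and passed to the limit in the right sequence, since one cannot in general expect $\lambda^{t_k}$ and $\lambda^{t_k+1}$ to share a common limit along an arbitrary subsequence. The linearity hypothesis on $g_i$ enters here cleanly, as it guarantees joint continuity of $\sum_{i \in \nlconssidx} \lambda_i g_i(x)$ in $(\lambda, x)$ on the bounded set $\Delta \times \minlprelaxset$ and ensures that the subproblem optimum in Step~\ref{step:subprobSol} is always attained, so that the iterates $\bar x^t$ are well-defined throughout the recursion.
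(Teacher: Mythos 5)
Your proof is correct and follows essentially the same route as the paper, which establishes this result as the $K=1$ special case of Theorem~\ref{theorem:generalalgo}: the same split into finite termination versus an infinite run, extraction of convergent subsequences of $(\lambda^t,\Psi^t,\bar x^t)$ to force $\Psi^t \to 0$, and comparison of a (near-)optimal normalized multiplier against the generated points, yielding the dual value as $\sup_t \sfct(\lambda^t)$. Your treatment of the unattained-infimum case via a further convergent subsequence is in fact slightly more careful than the paper's, and since your argument only uses continuity of the $g_i$ (not linearity), it is consistent with the paper's remark that the stronger, nonlinear version holds.
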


We prove a stronger version of this theorem in Section~\ref{section:generalalgo} that also works for nonlinear constraints.
Note that the convergence of the algorithm only relies on the solution of an LP and a nonconvex problem $S(\lambda)$, and
does not make any assumption on the nature of $S(\lambda)$.

\begin{figure}[t]
 \begin{center}
  \includegraphics[width=0.22\textwidth]{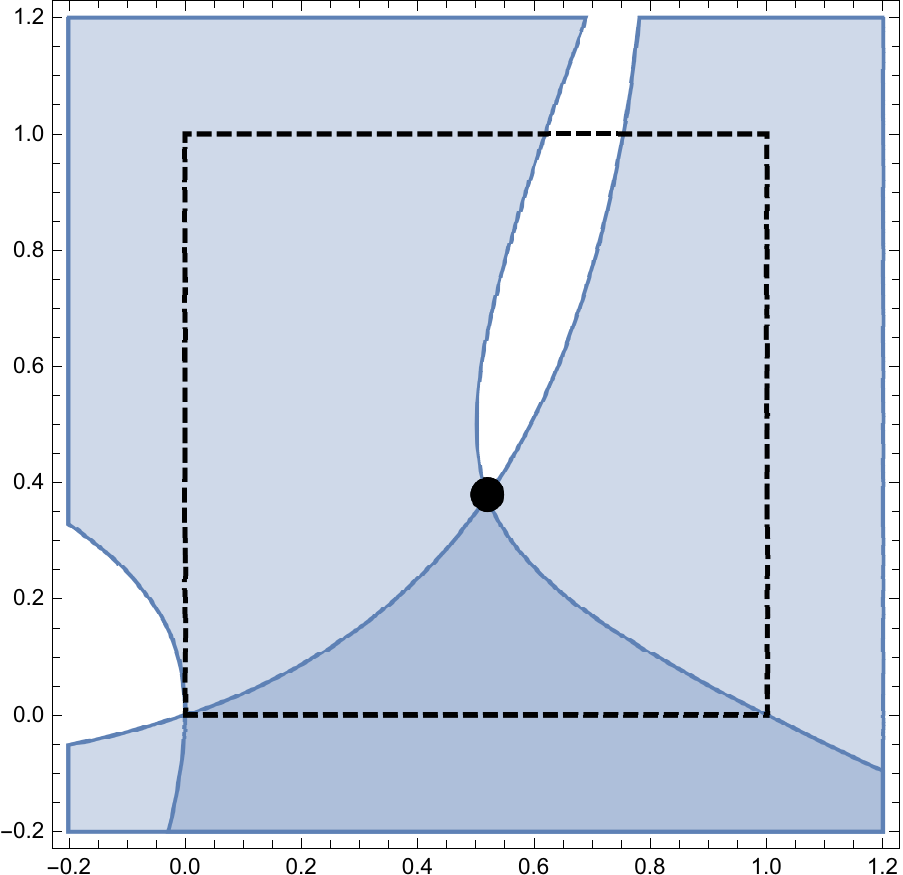}\hspace{2ex}
  \includegraphics[width=0.22\textwidth]{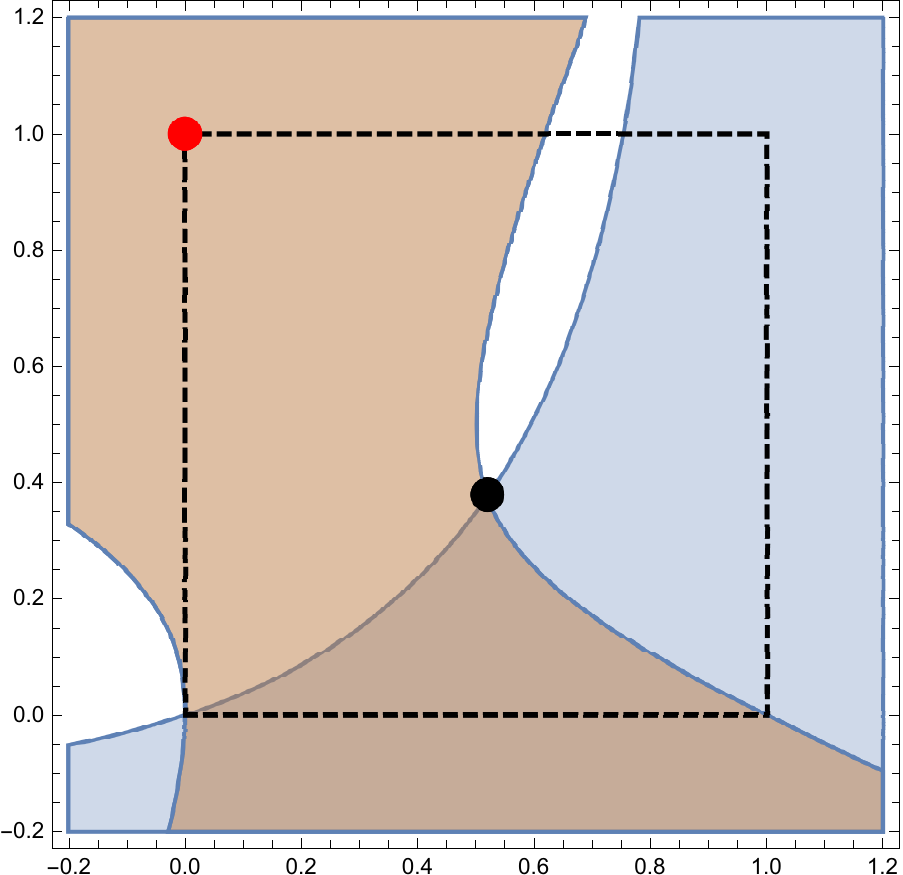}\hspace{2ex}
  \includegraphics[width=0.22\textwidth]{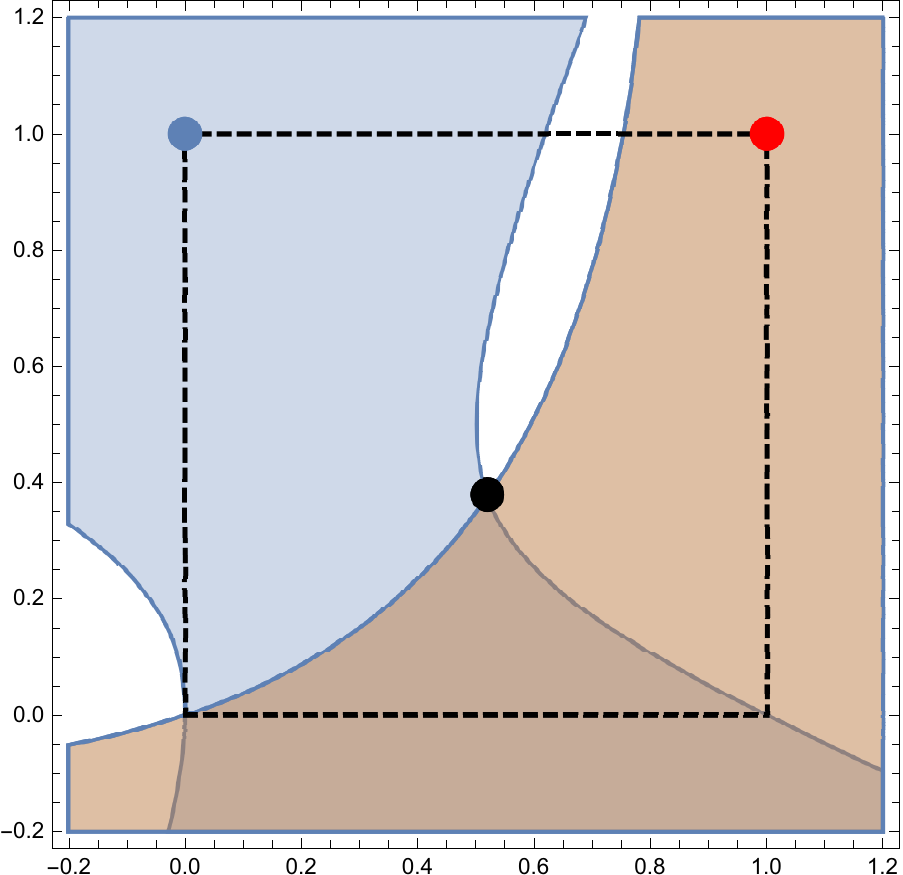}\hspace{2ex}
  \includegraphics[width=0.22\textwidth]{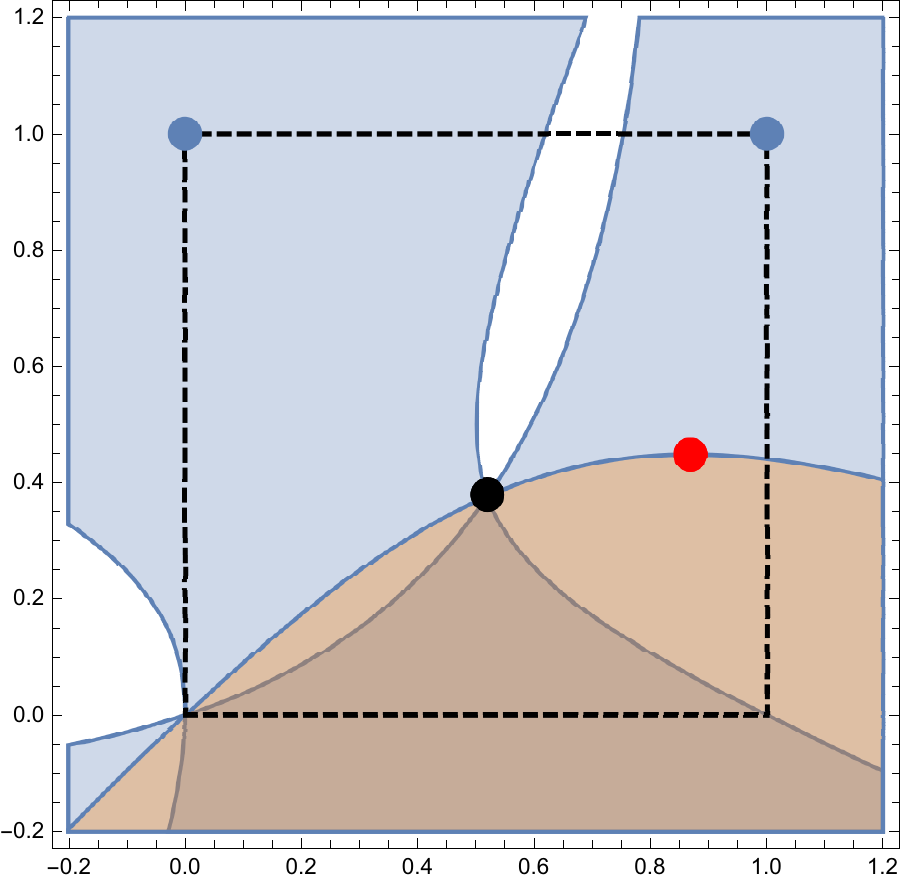}
 \end{center}

 \begin{center}
  \includegraphics[width=0.22\textwidth]{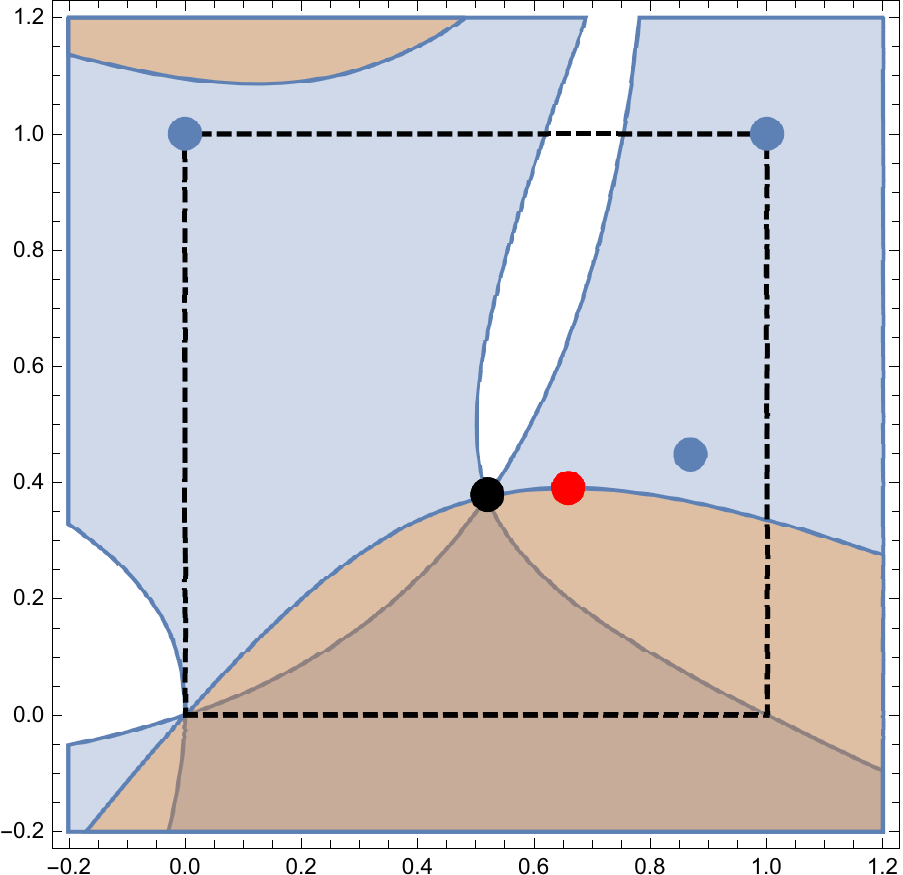}\hspace{2ex}
  \includegraphics[width=0.22\textwidth]{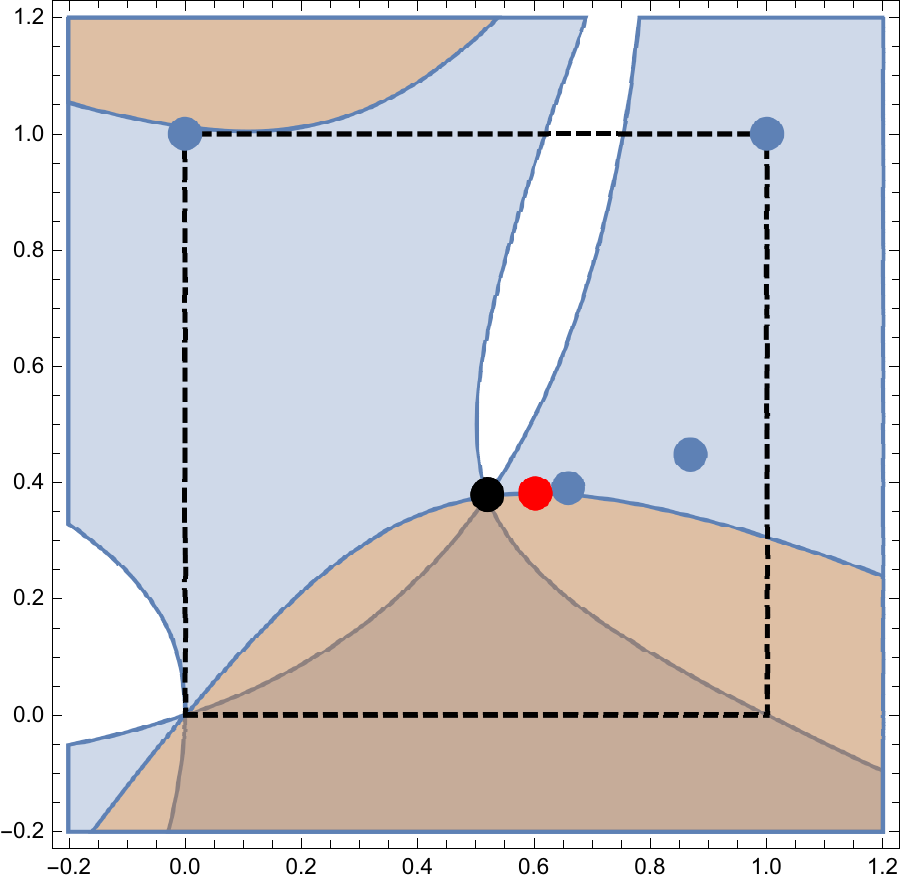}\hspace{2ex}
  \includegraphics[width=0.22\textwidth]{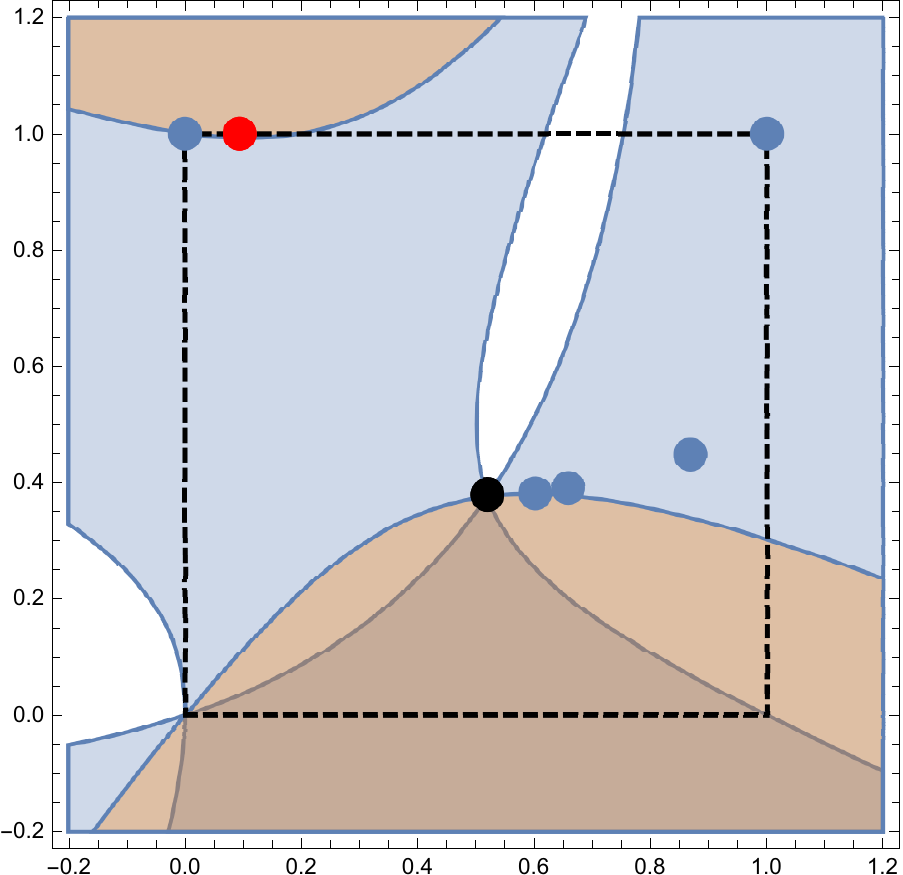}\hspace{2ex}
  \includegraphics[width=0.22\textwidth]{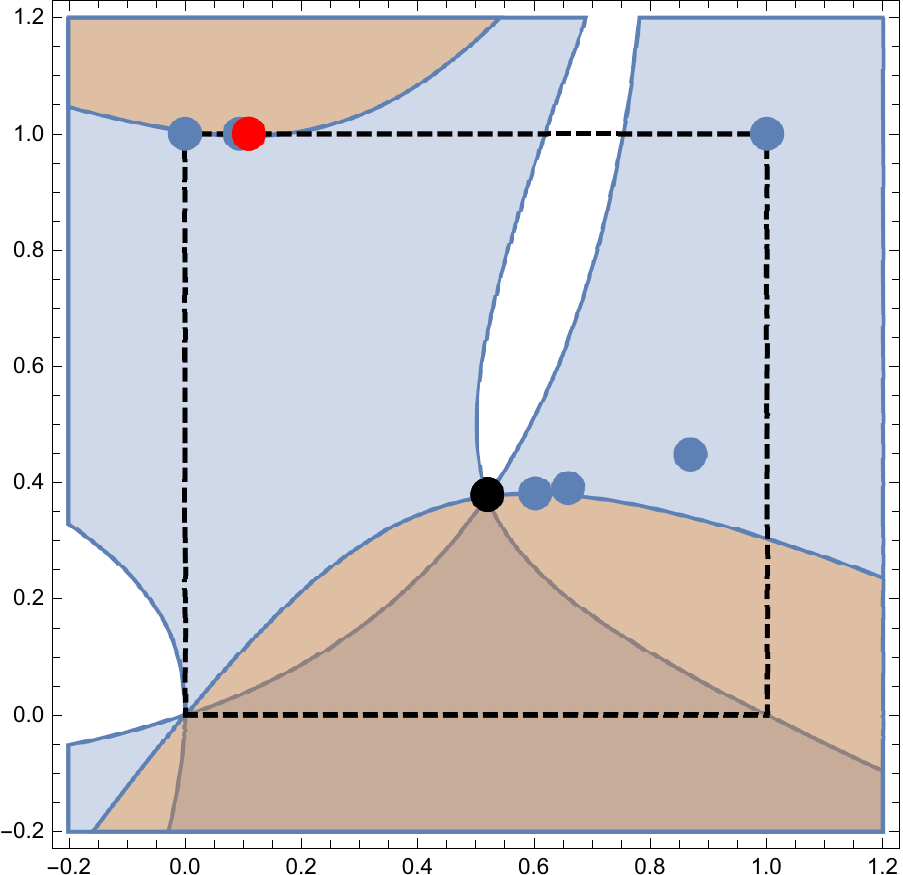}
 \end{center}

 \caption{An illustration of the Benders algorithm for the nonconvex optimization problem of Example~\ref{example:xy}. The black point
    is the optimal solution to the original problem, the dashed lines correspond to the variable bounds, the light blue regions are the
    feasible sets of each nonlinear constraint, and the orange region is $\sset_{\lambda}$ for the different values of
    $\lambda \in \R_+^2$ that are computed during the Benders algorithm. The red points are the optimal solutions at each iteration and any
	red point becomes blue in the next iteration as part of the set $\pointset$ of points to be separated. The algorithm converges after
	seven iteration (from left to write in both rows of the figure), whereas the best
    multiplier $\lambda^* \approx (0.56, 0.44)$ was found after five iterations.}
 \label{fig:benders}
\end{figure}

\subsection{Algorithmic enhancements}

In this section, we present computational enhancements that speed up Algorithm~\ref{algo:benders} and improve the
quality of the dual bound that can be achieved from~\eqref{eq:surrogate:dual}. For the sake of completeness, we also
include techniques that have been tested but did not improve the quality of the computed dual bounds significantly.

\subsubsection{Refined \MILP relaxation}\label{section:refinedmilp}

Instead of only using the initial linear constraints $Ax\le b$ of~\eqref{eq:minlp}, we exploit a linear programming
(\LP) relaxation of~\eqref{eq:minlp} that is available in \LP-based spatial branch and bound. This relaxation contains
$Ax \le b$ but also linear constraints that have been derived from, e.g., integrality restrictions of variables (e.g.,
MIR cuts~\cite{Nemhauser1990} and Gomory cuts~\cite{Gomory1960TR}), gradient cuts~\cite{Kelley1960}, \RLT
cuts~\cite{Sherali1999}, \SDP cuts~\cite{SheraliFraticelli2002}, or other valid underestimators for each $g_i$ with $i \in \nlconssidx$.
Using a linear relaxation $A'x \le b'$ with
\begin{equation}
  \minlprelaxset' := \{ x \in \minlprelaxset \mid A'x \le b' \}
\end{equation}
in the definition of $\sset_{\lambda}$ improves the value of~\eqref{eq:surrogate:dual} because a relaxed version of the
nonlinear constraint $g_i(x) \le 0$ is captured in $\sset_\lambda$ even if $\lambda_i$ is zero.

Another way to further strengthen the linear relaxation is to make use of objective cutoff information that is available
in spatial branch and bound. Suppose that there is a feasible, but not necessarily optimal, solution $x^*$
to~\eqref{eq:minlp}. Then, the linear relaxation can be strengthened by adding the inequality $c^\T x \le c^\T
x^*$. Adding this inequality preserves all optimal solutions of~\eqref{eq:minlp} and might
improve the optimal value of~\eqref{eq:surrogate:dual}.

In our experiments, we observed that utilizing the \LP relaxation that has been constructed in spatial branch and bound
is the most crucial ingredient to obtain strong dual bounds with surrogate relaxations, while the objective cutoff has
only a negligible impact on the quality of the computed dual bounds but helps in solving $\sfct(\lambda)$ faster.

\subsubsection{Dual objective cutoff in the sub-problem}

There is an undesired phenomenon present in Algorithm~\ref{algo:benders}: the sequence of dual bounds provided by
$c^\T x^*$ in Step~\ref{step:subprobSol} might not be monotone, i.e., the algorithm can spend several iterations
generating points that will not lead to an improvement in the dual bound $D$.

One way to overcome this problem is to add a \emph{dual objective cutoff} $c^\T x \geq D$ to the sub-problem $S(\lambda)$.
This enforces the sequence of dual bounds to be monotone. Adding such a constraint does not change the convergence/correctness
guarantees of Algorithm~\ref{algo:benders} and it can improve the
progress of the subsequent dual bounds. Moreover, such a cutoff can be used to filter
the set $\pointset$ and thus reduce the size of the \LP~\eqref{eq:benders:master}. Consider Figure~\ref{fig:benders} for
the effect of such a cutoff: the best dual bound is found at iteration five, meaning that the two last iterations
could be avoided. We also observed this behavior in other experiments, confirming the quality increase in the dual bounds
provided throughout the algorithm.

The dual objective cutoff has an unfortunate drawback. Adding a constraint that is parallel to the objective function
increases degeneracy. The degeneracy affects essential components of a branch-and-bound solver, e.g., pseudocost
branching~\cite{Benichou1971}, which typically makes the problem harder to solve. In the case of the Benders algorithm,
adding this cutoff significantly increases the time for solving the sub-problem, resulting in an overall negative
effect on the algorithm. We confirmed this with extensive computational experiments and decided not to include this
feature in our final implementation.

Fortunately, we can still carry dual information through different iterations and improve the performance of the
algorithm, without having to resort to a strict objective cutoff. We discuss this next.

\subsubsection{Early stopping in the sub-problem} \label{section:earlystop}

One important ingredient to speed up Algorithm~\ref{algo:benders}, proposed by Karwan~\cite{Karwan1976}
and Dyer~\cite{Dyer1980} independently, is an early stopping criterion while solving $S(\lambda)$. In our
setting, problem $S(\lambda)$ is the bottleneck of Algorithm~\ref{algo:benders}. This makes any technique
that can speed up the solving process of $S(\lambda)$ a crucial feature for Algorithm~\ref{algo:benders}.

Assume that Algorithm~\ref{algo:benders} proved a dual bound $D$ in some previous iteration. It is possible to stop the solving
process of $\sfct(\lambda)$ if a point $\bar x \in \sset_{\lambda}$ with $c^\T \bar x \le D$ has been found. The point
$\bar x$ both provides a new inequality for~\eqref{eq:benders:master} violated by $\lambda$ (as $\bar x \in
\sset_{\lambda}$) and shows $\sfct(\lambda) \le D$, i.e., $\lambda$ will not lead to a better dual bound. All
convergence and correctness statements regarding Theorem~\ref{theorem:convergence} remain valid after this
modification.

Furthermore, we can apply the same idea with any choice of $D$. In this scenario, $D$ would
act as a \emph{target} dual bound that we want to prove.
Due to the fact that the Benders-type algorithm is computationally expensive, one might require a minimum improvement
in the dual bound. Empirically, we observed that solving $\sfct(\lambda)$ to global optimality for difficult \MINLPs
requires a lot of time. However, finding a good quality solution for $\sfct(\lambda)$ is usually fast. This allows us
to early stop most of the sub-problems and only spend time on those sub-problems that will likely result in a dual
bound that is at least as good as the target value $D$.

In our computational study presented in Section~\ref{section:experiments}, we show that the early stopping technique is crucial
to prove significantly better dual bounds than the best known dual bounds in the literature on difficult \MINLPs.

\subsection{Empirical observations}

For the implementation of Algorithm~\ref{algo:benders}, we use the \MINLP solver \scip
\begin{enumerate}
\item to construct a linear relaxation $A'x \le b'$ for~\eqref{eq:minlp},
\item to find an objective cutoff $c^\T x \le c^\T x^*$, and
\item to use it as a black box to solve each $\sfct(\lambda)$ sub-problem.
\end{enumerate}
We provide more details of our implementation and the results in
Section~\ref{section:experiments}, but in order to provide an overall notion of the empirical impact of this algorithm
to the reader, we briefly summarize some important observations.

Our proposed algorithmic enhancements proved to be key for obtaining a practical algorithm for the
surrogate dual, especially the use of a refined \MILP relaxation. The achieved dual bounds by only using the initial
linear relaxation in Algorithm~\ref{algo:benders} were almost always dominated by the dual bounds obtained by the
refined \MILP relaxation. Thus, utilizing the refined \MILP relaxation seems mandatory for obtaining strong
surrogate relaxations.  Our computational study in Section~\ref{section:experiments} shows that our algorithmic
enhancements for Algorithm~\ref{algo:benders} allows us to compute dual bounds that close on average
\rootGapAffectedKOne\% more gap (w.r.t. the best known primal bound) than the dual bounds obtained by the refined
\MILP relaxations, i.e., $S(0)$, on \rootAffectedSize{} affected instances.

While the overall impact of this ``classic'' surrogate duality is positive, we observed that the dual bound
deteriorates with increasing number of nonlinear constraints. The reason is somewhat intuitive: aggregating a \emph{large}
number of nonconvex constraints into a single constraint may not capture the structure of the underlying \MINLP. For
this reason, we propose in the next Section to use generalized surrogate relaxations for solving \MINLPs, which include
multiple aggregation constraints. Even though the discussed relaxations are in general more difficult to solve,
they can provide significantly better dual bounds.

\section{Generalized surrogate duality}
\label{section:gendual}

In the following, we discuss a generalization of surrogate relaxations that has been introduced by~\cite{Glover1975}.
Instead of a single aggregation, it allows for $K \in \N$ aggregations of the nonlinear constraints of~\eqref{eq:minlp}.
The nonnegative vector
\begin{equation}
 \lambda = (\lambda^1, \lambda^2, \ldots, \lambda^K) \in \R^{K \nnlconss}_+
\end{equation}
encodes these $K$ aggregations
\begin{equation}
 \sum_{i \in \nlconssidx} \lambda^k_i g_i(x) \le 0, \quad k \in \{1,\ldots,K\}
\end{equation}
of the nonlinear constraints. Similar to~$\sset_\lambda$, for a vector $\lambda \in \R^{K \nnlconss}_+$ the feasible
region of the \emph{$K$-surrogate relaxation} is given by the intersection
\begin{equation}
 \gsset{K}_\lambda := \bigcap_{k=1}^{K} \sset_{\lambda^k} ,
\end{equation}
where $\sset_{\lambda^k}$ is the feasible region of the surrogate relaxation~$\sfct(\lambda^k)$ for $\lambda^k \in
\R^\nnlconss_+$. It clearly follows that $\gsset{K}_\lambda$ is a relaxation for~\eqref{eq:minlp}.
The best dual bound for~\eqref{eq:minlp} generated by a $K$-surrogate relaxation is given by
\begin{equation}\label{eq:gsd}
 \sup_{\lambda \in \R^{K \nnlconss}_+} \gsfct{K}(\lambda) ,
\end{equation}
which we call the \emph{$K$-surrogate dual}. Note that scaling each $\lambda^k \in \R^\nnlconss_+$ individually by a
positive scalar does not affect the value of $\gsfct{K}(\lambda)$, i.e.,
\begin{equation*}
  \gsfct{K}(\ldots, \lambda^k, \ldots) = \gsfct{K}(\ldots, \alpha \lambda^k, \ldots)
\end{equation*}
for any $\alpha > 0$. Therefore, it is possible to impose additional normalization constraints $\norm{\lambda^k}_1 \le 1$
for each $k \in \{1, \ldots, K\}$.

In~\cite{Greenberg1970}, a related generalization was proposed, although not computationally tested. The
paper considers a partition of constraints which are aggregated; equivalently, the support of sub-vectors $\lambda^k$
are assumed to be fixed and disjoint. Glover's generalization \cite{Glover1975} does not make any assumption on the structure of the
$\lambda^k$ sub-vectors. As we will see, this makes a significant difference for two reasons: (a) selecting
the ``best'' partition of constraints \emph{a-priori} is a challenging task and (b) restricting the support of sub-vectors
$\lambda^k$ to be disjoint can weaken the bound given by~\eqref{eq:gsd}. The reason is that the optimal
$\lambda$ multipliers might have to use the same constraints in multiple aggregations.

The function $\gsfct{K}$ remains lower semi-continuous for any choice of $K$. The idea of the proof
of the following proposition is similar to the one given by Glover~\cite{Glover1975} for the case of $K=1$.

\begin{proposition} \label{proposition:lsc}
  If $g_i$ is continuous for every $i \in \nlconssidx$ and $\minlprelaxset$ is compact then
  $\gsfct{K}: \R^{K \nnlconss}_+ \rightarrow \R$ is lower semi-continuous for any choice of $K$.
\end{proposition}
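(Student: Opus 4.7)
The plan is to verify lower semi-continuity directly from the sequential definition. Fix a sequence $\{\lambda^t\}_{t\in\N}\subseteq \R^{K\nnlconss}_+$ with $\lim_t \lambda^t = \lambda^*$, and denote by $\lambda^{t,k} \in \R^\nnlconss_+$ the $k$-th block of $\lambda^t$; I want to show $\gsfct{K}(\lambda^*) \le \liminf_t \gsfct{K}(\lambda^t)$. First, I would dispose of the trivial case in which $\liminf_t \gsfct{K}(\lambda^t) = +\infty$ (including infeasible $\gsset{K}_{\lambda^t}$), where the inequality holds automatically. Otherwise, passing to a subsequence (still indexed by $t$), I may assume that $\gsfct{K}(\lambda^t)$ is finite for every $t$ and converges to $\liminf_t \gsfct{K}(\lambda^t)$.

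Next, for each such $t$ the set $\gsset{K}_{\lambda^t}\subseteq\minlprelaxset$ is compact: it is closed by continuity of the $g_i$ together with closedness of $\minlprelaxset$, and bounded by compactness of $\minlprelaxset$. Hence the minimum defining $\gsfct{K}(\lambda^t)$ is attained at some $x^t \in \gsset{K}_{\lambda^t}$. By compactness of $\minlprelaxset$, I can extract a further subsequence along which $x^t \to x^*$. The delicate point, and the only place where the mixed-integer structure of $\minlprelaxset$ enters, is that the integer components of the $x^t$ range over a bounded subset of $\Z^\nintvars$, hence over a finite set; along a further subsequence these components are constant, so $x^* \in \R^{\nvars-\nintvars}\times \Z^\nintvars$. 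Combined with $Ax^* \le b$ obtained by continuity, this yields $x^* \in \minlprelaxset$.

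Finally, I would pass each aggregation inequality to the limit: for every $k \in \{1,\ldots,K\}$ one has $\sum_{i\in\nlconssidx} \lambda^{t,k}_i g_i(x^t) \le 0$, and by continuity of the $g_i$ together with the joint convergence $\lambda^t \to \lambda^*$ and $x^t \to x^*$ this gives $\sum_{i\in\nlconssidx} \lambda^{*,k}_i g_i(x^*) \le 0$. Hence $x^* \in \gsset{K}_{\lambda^*}$, and therefore
\[
  \gsfct{K}(\lambda^*) \;\le\; c^\T x^* \;=\; \lim_t c^\T x^t \;=\; \liminf_t \gsfct{K}(\lambda^t),
\]
which is what was required. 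The main obstacle I anticipate is the construction of the limit point $x^*$ inside $\minlprelaxset$: continuity of the data alone is not enough to keep integer coordinates integer in the limit, so one must exploit compactness in the specific form that the integer slice of $\minlprelaxset$ has only finitely many values, allowing a stabilization argument on a subsequence. Once $x^* \in \minlprelaxset$ is secured, the remainder is a routine continuity argument, and the structure is essentially identical to the $K=1$ case treated by Glover~\cite{Glover1975}.
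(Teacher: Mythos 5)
Your proof is correct and follows essentially the same route as the paper's: pass to a subsequence realizing the $\liminf$, extract a convergent subsequence of optimal solutions $x^t$ via compactness of $\minlprelaxset$, use continuity of the $g_i$ to show the limit point is feasible for $\gsfct{K}(\lambda^*)$, and conclude $\gsfct{K}(\lambda^*)\le c^\T x^*=\liminf_t \gsfct{K}(\lambda^t)$. Your extra care about attainment of the minima, the $+\infty$ case, and the stabilization of integer components are just explicit verifications of what the paper's standing assumptions ($\minlpfeasset\neq\emptyset$ and compactness, hence closedness, of $\minlprelaxset$) already provide.
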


\begin{proof}
  Let $\{\lambda_t\}_{t \in \N} \subseteq \R^{K \nnlconss}_+$ a sequence that converges to $\lambda^*$ and denote with
  $x^t \in \minlprelaxset$ an optimal solution of $\gsfct{K}(\lambda^t)$.
  We need to show that $\gsfct{K}(\lambda^*) \leq \liminf_{t \to \infty} \gsfct{K}(\lambda^t)$.
  By definition, there exists a subsequence $\{\lambda^\tau\}_{\tau \in \N}$ of $\{\lambda^t\}_{t \in \N}$ such that
  $\gsfct{K}(\lambda^\tau) \to \liminf_{t \to \infty} \gsfct{K}(\lambda^t)$.
  Since $\minlprelaxset$ is compact, there exists a subsequence $\{x^l\}_{l \in \N}$ of
  $\{x^\tau\}_{\tau \in \N}$ such that $\lim_{l \rightarrow \infty} x^l = x^*$. 
  As $\{\lambda^l\}_{l \in \N}$ is a subsequence of $\{\lambda^\tau\}_{\tau \in \N}$, we have that
  $\lim_{l \rightarrow \infty} \gsfct{K}(\lambda^l) = \liminf_{t \to \infty} \gsfct{K}(\lambda^t)$. From
  $x^l \in \gsset{K}_{\lambda^l}$ it follows that
  \begin{equation*}
    \sum_{i \in \nlconssidx} \lambda_{k \nnlconss + i}^l \, g_i(x^l) \le 0
  \end{equation*}
  for every $k \in \{1, \ldots, K\}$, which is equivalent to
  \begin{equation*}
    \max_{1 \le k \le K} \sum_{i \in \nlconssidx} \lambda_{k \nnlconss + i}^l \, g_i(x^l) \le 0 \, .
  \end{equation*}
  Because the $g_i$ are continuous and the maximum of continuous functions is still continuous, it follows that
  \begin{equation*}
    \max_{1 \le k \le K} \sum_{i \in \nlconssidx} \lambda_{k \nnlconss
    + i}^* \, g_i(x^*) = \lim_{l \rightarrow \infty} \max_{1 \le k \le K}
    \sum_{i \in \nlconssidx} \lambda_{k \nnlconss + i}^l \, g_i(x^l) \le 0.
  \end{equation*}
  Hence, $x^*$ is feasible but not necessarily optimal for $\gsfct{K}(\lambda^*)$. Therefore,
  \begin{equation*}
    \gsfct{K}(\lambda^*) \le c^\T x^* = \lim_{l \rightarrow \infty} c^\T x^l = \lim_{l \rightarrow \infty} \gsfct{K}(\lambda^l) = \liminf_{t \rightarrow \infty} \gsfct{K}(\lambda^t) \, .
  \end{equation*}
\end{proof}

One important difference to the classic surrogate dual is that $\gsfct{K}(\lambda)$ is no longer quasi-concave. The
following example shows this even for the case of $K=2$ and two linear constraints.

\begin{example}\normalfont\label{example:counterexample}
 Let $K=2$ and consider the linear program
 \begin{equation*}
  \begin{aligned}
   \min \quad & y \\
   \st \quad  & 4x - 8y + 3.2 \le 0 , \\
              & 5x - y - 1.5 \le 0 , \\
              & (x,y) \in [0,1]^2 ,
  \end{aligned}
 \end{equation*}
 which contains two variables and two linear constraints. Due to the symmetry of the generalized surrogate dual,
 $\gsfct{2}(\lambda) = \gsset{2}(\mu) \approx 0.30$ holds for the aggregation vectors $\lambda := ((0.7,0.3),(0.3,0.7))$ and
 $\mu := ((0.3,0.7),(0.7,0.3))$. However, using the convex combination $\lambda / 2 + \mu / 2$ we have that
 $\gsfct{2}(\lambda / 2 + \mu / 2) \approx 0.19$, which is smaller than $\gsfct{2}(\lambda)$ and $\gsfct{2}(\mu)$ and
 thereby shows that $\gsfct{2}$ is not quasi-concave.
 See Figure~\ref{fig:counterexample} for an illustration of the counterexample.
\end{example}

\begin{figure}[t]
 \centering
 \begin{minipage}{0.30\textwidth}
  \centering
  \includegraphics[width=\textwidth]{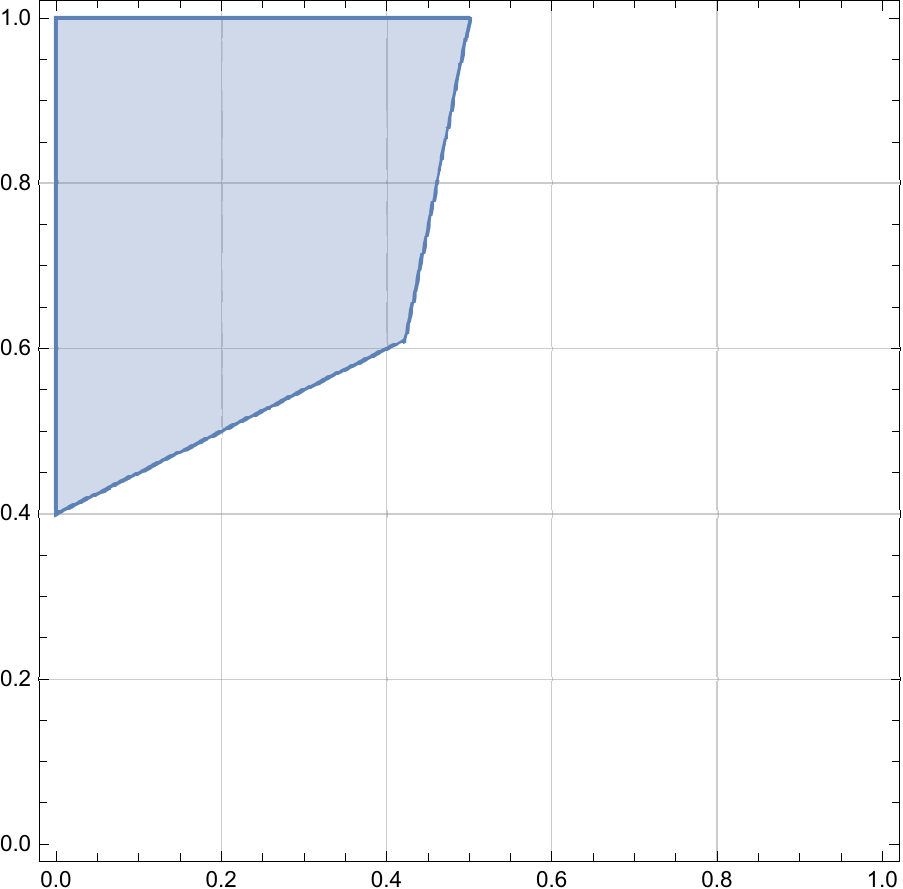}
 \end{minipage}
 \hfill
 \begin{minipage}{0.30\textwidth}
  \centering
  \includegraphics[width=\textwidth]{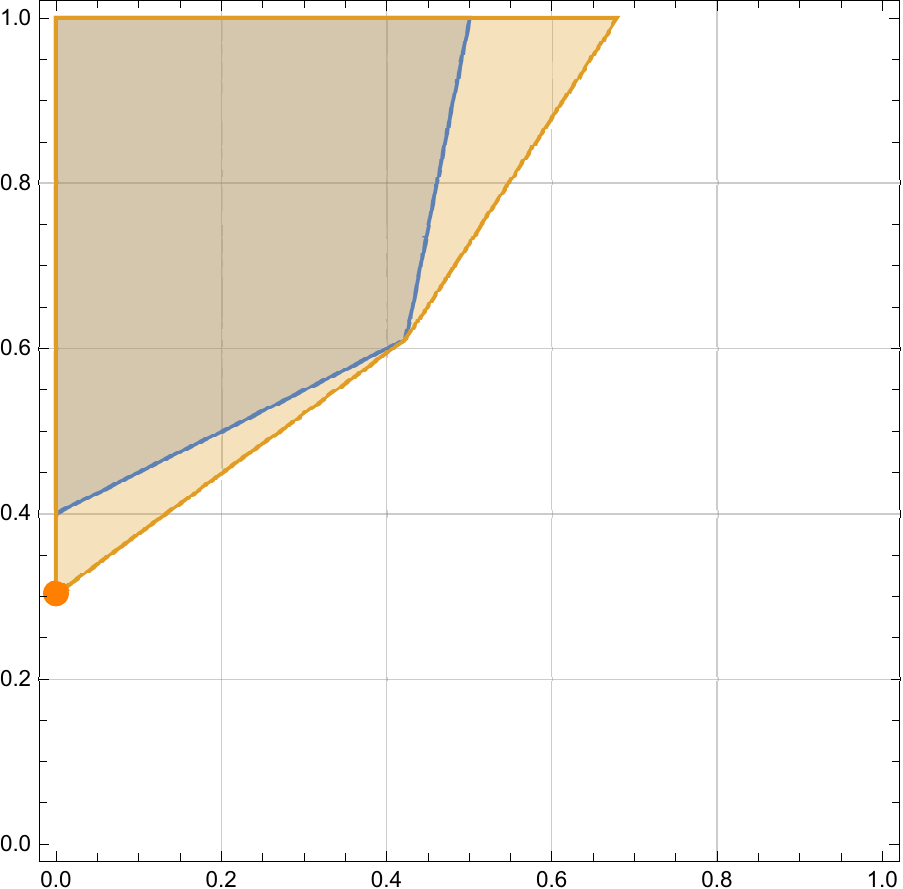}
 \end{minipage}
 \hfill
 \begin{minipage}{0.30\textwidth}
  \centering
  \includegraphics[width=\textwidth]{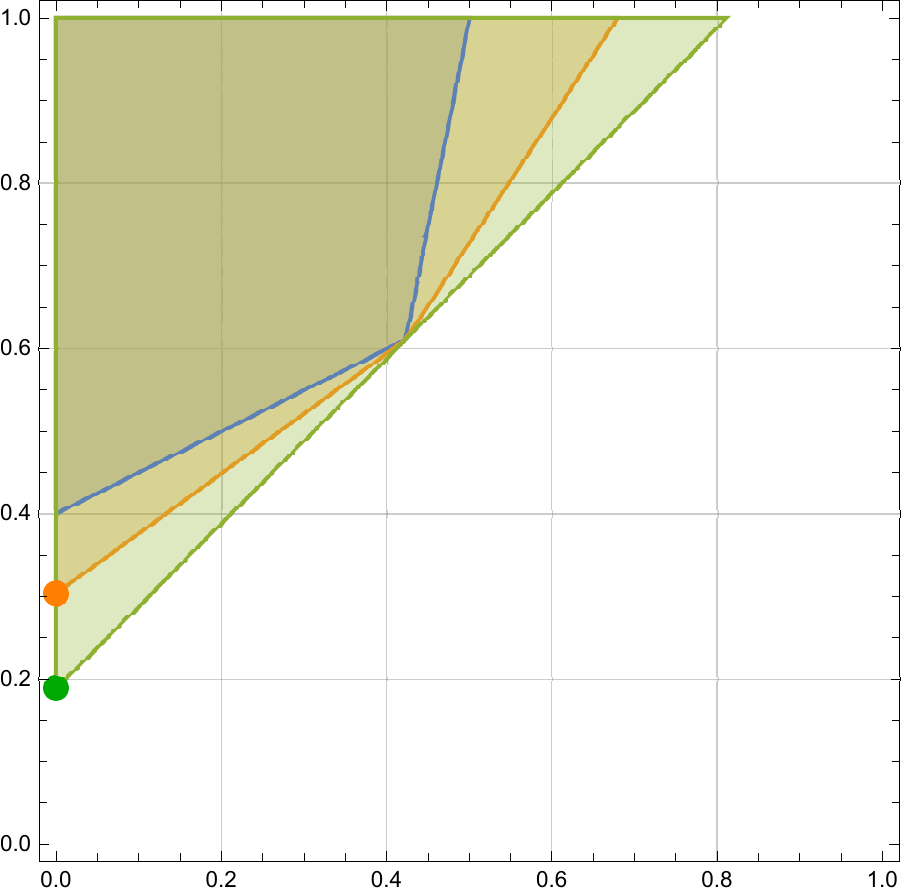}
 \end{minipage}
 \caption{A visualization of Example~\ref{example:counterexample} that shows that $\gsfct{K}$ is in general not
   quasi-concave for $K > 1$. The blue region is the feasible set defined by two original inequalities. The orange
   region depicts both $\gsset{2}_\lambda$ and $\gsset{2}_\mu$, while the green region is their convex combination
   $\gsset{2}_{(\lambda+\mu) / 2}$. The example shows $\gsfct{2}(\lambda) = \gsfct{2}(\mu) > \gsfct{2}((\lambda + \mu)/2)$,
   which proves that $\gsfct{2}$ is not quasi-concave.}
 \label{fig:counterexample}
\end{figure}

Due to the fact $\gsfct{K}$ is in general not quasi-concave, gradient descent-based algorithms for
optimizing~\eqref{eq:surrogate:dual}, as in~\cite{Karwan1976}, do not solve~\eqref{eq:gsd} to global
optimality.
Even though~\eqref{eq:gsd} is substantially more difficult to solve than~\eqref{eq:surrogate:dual}, the following
theorem shows that it might be beneficial to consider larger $K$ to obtain tight relaxations for~\eqref{eq:minlp}.

\begin{theorem}\label{theorem:gsd}
 The inequality
 \begin{equation}
  \sup_{\lambda \in \R^{K \nnlconss}_+} \gsfct{K}(\lambda) \le \sup_{\lambda \in \R^{(K+1) \nnlconss}_+} \gsfct{K+1}(\lambda)
 \end{equation}
 holds for any $K \in \N$. Furthermore, $\sup_{\lambda \in \R^{\nnlconss^2}_+} \gsfct{\nnlconss}(\lambda)$ is equal to the
 optimal solution value of~\eqref{eq:minlp}.
\end{theorem}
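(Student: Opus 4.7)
The plan is to handle the two claims separately, both by exhibiting explicit multiplier vectors that realize the desired relationships between feasible regions. The main observation is that adding an aggregation to a $K$-surrogate relaxation can only shrink (weakly) the feasible region $\gsset{K}_\lambda$, and that with $K=\nnlconss$ aggregations we have enough ``slots'' to recover each original nonlinear constraint individually.

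For the first part, take any $\lambda = (\lambda^1,\ldots,\lambda^K) \in \R^{K\nnlconss}_+$ and lift it to $\tilde\lambda := (\lambda^1,\ldots,\lambda^K, 0) \in \R^{(K+1)\nnlconss}_+$ by appending the zero vector as the $(K{+}1)$-th aggregation. The extra constraint $\sum_{i \in \nlconssidx} 0 \cdot g_i(x) \le 0$ is trivially satisfied by every $x \in \minlprelaxset$, so $\sset_0 = \minlprelaxset$ and
\[
\gsset{K+1}_{\tilde\lambda} \;=\; \gsset{K}_\lambda \cap \sset_0 \;=\; \gsset{K}_\lambda .
\]
Hence $\gsfct{K+1}(\tilde\lambda) = \gsfct{K}(\lambda)$, and taking the supremum over $\lambda$ on both sides yields the desired inequality.

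For the second part I would prove both directions. The ``$\le$'' direction is immediate from the fact that $\gsset{K}_\lambda$ is always a relaxation of $\minlpfeasset$: for every $k$, each feasible point of~\eqref{eq:minlp} satisfies $g_i(x) \le 0$ for all $i$, hence satisfies $\sum_i \lambda^k_i g_i(x) \le 0$ for any $\lambda^k \in \R^\nnlconss_+$. Thus $\minlpfeasset \subseteq \gsset{K}_\lambda$ and $\gsfct{K}(\lambda)$ does not exceed the optimum of~\eqref{eq:minlp} for any $K$ and any $\lambda$. For the ``$\ge$'' direction, specialize to $K = \nnlconss$ and pick $\lambda^* \in \R^{\nnlconss^2}_+$ whose $k$-th block equals the $k$-th standard basis vector $e_k \in \R^\nnlconss$. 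Then the $k$-th aggregated constraint reduces to $g_k(x) \le 0$, so
\[
\gsset{\nnlconss}_{\lambda^*} \;=\; \bigcap_{k=1}^{\nnlconss} \{\, x \in \minlprelaxset \mid g_k(x) \le 0 \,\} \;=\; \minlpfeasset ,
\]
giving $\gsfct{\nnlconss}(\lambda^*)$ equal to the optimum of~\eqref{eq:minlp}, and therefore the supremum in~\eqref{eq:gsd} is at least this value.

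I do not expect any real obstacle here: both parts reduce to exhibiting the correct multiplier and a one-line set-containment argument, using only the definitions of $\gsset{K}_\lambda$ and the nonnegativity of the multipliers. The only minor point to be careful about is that the normalization $\|\lambda^k\|_1 \le 1$ mentioned after~\eqref{eq:gsd} is optional (the problem is invariant under positive rescaling of each block), so we are free to pick the zero vector in part one and the scaled basis vectors in part two without loss of generality.
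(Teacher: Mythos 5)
Your proposal is correct and takes essentially the same route as the paper: padding with a zero aggregation block to get $\gsfct{K}(\lambda)=\gsfct{K+1}(\lambda,0)$ for the monotonicity claim, and choosing $\lambda=(e_1,\ldots,e_\nnlconss)$ to recover the original constraints for the exactness claim. Your explicit treatment of the ``$\le$'' direction (that every $K$-surrogate relaxation lower-bounds the optimum) is left implicit in the paper but is a harmless addition.
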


\begin{proof}
 Note that $\gsfct{K}(\lambda) = \gsfct{K+1}(\lambda,0)$ holds for any $\lambda \in \R^{K \nnlconss}_+$. The result follows from
 \begin{equation*}
  \sup_{\lambda \in \R^{K \nnlconss}_+} \gsfct{K+1}(\lambda,0) \le \sup_{\lambda \in \R^{(K+1) \nnlconss}_+} \gsfct{K+1}(\lambda) \ .
 \end{equation*}
 To prove the second part it is enough to see that the aggregation constraints for
 \begin{equation*}
  \lambda = (e_1, e_2, \ldots, e_\nnlconss) \in \R^{\nnlconss^2}_+,
 \end{equation*}
 with $e_i$ being the $i$-th $\nnlconss$-dimensional unit vector, are equal to the constraints of~\eqref{eq:minlp}.
\end{proof}

Theorem~\ref{theorem:gsd} shows the potential of generalized surrogate duality. Using a large enough $K$ implies that the
value of~\eqref{eq:gsd} is equal to the optimal value of the \MINLP. The following example shows that
going from $K=1$ to $K=2$ can have a tremendous impact on the quality of the surrogate relaxation:

\begin{example}\normalfont Consider the following \NLP with four nonlinear constraints and four unbounded variables:
 \begin{align*}
  \min \quad & -x - y \\
  \st \quad  & x^3 - z \le 0 \\
             & x^3 + z \le 0 \\
             & y^3 + w \le 0 \\
             & y^3 - w \le 0 \\
             & (x,y,z,w) \in \R^4
\end{align*}
 It is easy to see that $(0,0,0,0)$ is the optimal solution.
 First, note that the classic surrogate dual, i.e., when only a single aggregation is allowed, is unbounded. For an
 aggregation $\lambda \in \R^4$, the sole constraint in the corresponding surrogate relaxation is
 \[(\lambda_1 + \lambda_2)x^3 + (-\lambda_1 + \lambda_2)z + (\lambda_3 + \lambda_4)y^3 + (\lambda_3 - \lambda_4)w \leq
 0.\] If either $\lambda_1 \neq \lambda_2$ or $\lambda_3 \neq \lambda_4$, then the relaxation is clearly unbounded, as
 $z$ and $w$ are free variables. If $\lambda_1 = \lambda_2$ and $\lambda_3 = \lambda_4$, the aggregation reads
 $2\lambda_1 x^3 + 2\lambda_3 y^3 \le 0$, which also yields an unbounded surrogate relaxation.

 Consider the two aggregation vectors $\lambda = (\lambda^1,\lambda^2)$ with $\lambda^1 = (1/2,1/2,0,0)$ and $\lambda^2 =
 (0,0,1/2,1/2)$.  Using the $2$-surrogate relaxation obtained from $\lambda$ immediately implies tighter variable
 bounds $x \le 0$ and $y \le 0$, which proves optimality of $(0,0,0,0)$.
\end{example}

\section{An algorithm for the $K$-surrogate dual}
\label{section:generalalgo}

Even though~\eqref{eq:gsd} yields a strong relaxation for sufficiently large $K$, it is computationally
more challenging to solve than~\eqref{eq:surrogate:dual}. To the best of our knowledge, there is no algorithm in the literature known that can solve~\eqref{eq:gsd}.
Due to the missing quasi-concavity property of $\gsfct{K}$, it is not
possible to adjust each of the $K$ aggregation vectors independently and thus an alternating-type method based on the $K=1$ case could provide weak bounds.

In this section, we present the first algorithm for solving~\eqref{eq:gsd}. The idea of the algorithm is the
same as before: a master problem will generate an aggregation vector $(\lambda^1, \ldots, \lambda^K)$ and the sub-problem will solve the $K$-surrogate relaxation
corresponding to $(\lambda^1, \ldots, \lambda^K)$. The only differences to Algorithm~\ref{algo:benders} are that we replace the \LP master problem by a \MILP
master problem and solve $\gsfct{K}(\lambda^1,\ldots,\lambda^K)$ instead of $\sfct(\lambda)$.

\paragraph{Generalizing the Benders-type algorithm.}

Assume that we have found a solution $\bar x$ after solving $\gsfct{K}(\lambda^1, \ldots, \lambda^K)$.  In the next
iteration, we need to make sure that the point $\bar x$ is infeasible for at least one of the aggregated constraints.
This can be written as a disjunctive constraint
\begin{equation} \label{eq:gsd:logicor}
  \bigvee_{k=1}^K \left( \sum_{i \in \nlconssidx} \lambda^k_i g_i(\bar x) > 0 \right)
 \end{equation}
that contains $K$ many inequalities. As in~\eqref{eq:benders:master}, we replace the strict inequality by maximizing the
activity of $\sum_{i \in \nlconssidx} \lambda^k_i g_i(\bar x)$ for all $k \in \{1,\ldots,K\}$. The master problem for
the generalized Benders algorithm then reads as
\begin{equation}\label{eq:gbenders:master:disj}
  \begin{aligned}
   \max \quad & \Psi \\
   \st \quad & \bigvee_{k=1}^K \left( \sum_{i \in \nlconssidx} \lambda^k_i g_i(\bar x) \ge \Psi \right) && \fa \bar x \in \pointset  , \\
   & \norm{\lambda^k}_1 \le 1,\, \lambda^k \in \R_+^\nnlconss && \fa k \in \{1,\ldots,K\}  ,
  \end{aligned}
 \end{equation}
where $\pointset \subseteq \minlprelaxset$ is the set of generated points of the sub-problems.
One way to exactly solve~\eqref{eq:gbenders:master:disj} is to enumerate and solve all possible \LPs that are being
encoded by the disjunctions. Each \LP is constructed by choosing exactly one of the linear constraints of each
disjunction. However, following this approach is clearly prohibitively expensive because there are $K^{|\pointset|}$
many \LPs.

Instead,
we present an equivalent \MILP formulation that enables us to solve~\eqref{eq:gbenders:master:disj} more efficiently by
exploiting heuristics and symmetry breaking techniques that have been exclusively developed for \MILPs.

\paragraph{Solving the master problem.}

Modeling the master problem with a, so-called, big-M formulation solves orders of magnitudes faster. An
equivalent \MILP formulation of~\eqref{eq:gbenders:master:disj} reads as
\begin{equation}\label{eq:gbenders:master:milp}
  \begin{aligned}
   \max \quad & \Psi \\
   \st \quad  & \sum_{i \in \nlconssidx} \lambda^k_i g_i(\bar x) \ge \Psi - M (1 - z_k^{\bar x}) && \fa k \in \{1,\ldots,K\},\, \bar x \in \pointset  , \\
              & \sum_{k = 1}^{K} z_k^{\bar x} = 1 && \fa \bar x \in \pointset  , \\
              & z_k^{\bar x} \in \{0,1\} && \fa k \in \{1,\ldots,K\}, \, \bar x \in \pointset  , \\
              & \norm{\lambda^k}_1 \le 1,\, \lambda^k \in \R_+^\nnlconss && \fa k \in \{1,\ldots,K\}  , \\
  \end{aligned}
\end{equation}
where $M$ is a large constant. A binary variable $z_k^{\bar x}$ indicates if the $k$-th disjunction
of~\eqref{eq:gbenders:master:disj} is used to cut off the point $\bar x \in \pointset$. Due to the normalization
$\norm{\lambda^k}_1 \le 1$, it is possible to bound $M$ by $\max_{i \in \nlconssidx} |g_i(\bar x)|$. Even more,
since the optimal $\Psi$ values of~\eqref{eq:gbenders:master:milp} are non-increasing, we could use the optimal
$\Psi_{prev}$ of the previous iteration as a bound on $M$. Thus, it is possible to bound $M$ by $\min\{\max_{i \in
\nlconssidx} |g_i(\bar x)|, \Psi_{prev}\}$.

\begin{remark}
 Big-M formulations are typically not considered strong in \MILPs, given their usual weak \LP
 relaxations. Other formulations in extended spaces can yield better theoretical guarantees when solving problems
 like~\eqref{eq:gbenders:master:milp}, see, e.g.,~\cite{Balas1998},~\cite{Vielma2018b}, and~\cite{Bonami2015}.
 The drawback of these extended formulations is that they
 require to add copies of the
 $\lambda$ variables depending on the number of disjunctions. In~\cite{Vielma2018}, the author proposes an
 alternative that does not create variable copies, but that can be costly to construct unless special structure is
 present. In our case, however, as we will discuss in Section~\ref{section:generalalgo:earlystop}, we do not require a tight
 \LP relaxation of~\eqref{eq:gbenders:master:disj} and thus we opted to use~\eqref{eq:gbenders:master:milp}.
\end{remark}

The whole algorithm for the $K$-surrogate dual problem is stated in Algorithm~\ref{algo:genbenders}. Even
though~\eqref{eq:gbenders:master:milp} is more difficult to solve than~\eqref{eq:benders:master}, the following
example shows that Algorithm~\ref{algo:genbenders} can compute significantly better dual bounds than
Algorithm~\ref{algo:benders}.

\begin{algorithm}[bt]
  \caption{Benders algorithm for the $K$-surrogate dual.}
  \label{algo:genbenders}
   \begin{algorithmic}[1]
   \REQUIRE{\MINLP of the form~\eqref{eq:minlp}, threshold $\epsilon > 0$, $K \in \N$ aggregations}
   \ENSURE{optimal value $D \in \R$ of the $K$-surrogate dual}
   \STATE{initialize $\lambda \leftarrow 0 \in \R^{K \nnlconss}_+$, $\Psi \leftarrow \infty$, $\pointset \leftarrow \emptyset$, $D \leftarrow -\infty$}
   \WHILE{$\Psi \ge \epsilon$}
     \STATE{$\bar x \leftarrow \argmin_x \{ c^\T x \mid x \in \gsset{K}_\lambda\}$}
     \STATE{$D \leftarrow \max \{ D,c^\T \bar x \}$}
     \STATE{$\pointset \leftarrow \pointset \cup \{\bar x\}$}
     \STATE{$(\lambda,\Psi) \leftarrow $ optimal solution of~\eqref{eq:gbenders:master:disj} for $\pointset$}
   \ENDWHILE
   \RETURN{$D$}
   \end{algorithmic}
 \end{algorithm}

\begin{example}\label{ex:gbenders:example}\normalfont We briefly discuss the results of Algorithm~\ref{algo:genbenders} for the instance
 \texttt{genpooling\_lee1} from the \minlplib. The instance consists of 20 nonlinear, 59 linear constraints, 9 binary, and 40 continuous
 variables after preprocessing. The classic surrogate dual, i.e., $K=1$, could be solved to optimality, whereas for
 $K=2$ and $K=3$ the algorithm hit the iteration limit. Nevertheless, the dual bound $-5064.2$ achieved for $K=2$ and
 the dual bound $-4973.2$ for $K=3$ are significantly better than the dual bound of $-5246.0$ for $K=1$, see
 Figure~\ref{fig:genpooling_lee1}.
\end{example}

\begin{figure}[t]
 \centering
 \begin{minipage}{0.45\textwidth}
  \centering
  \includegraphics[height=3.5cm]{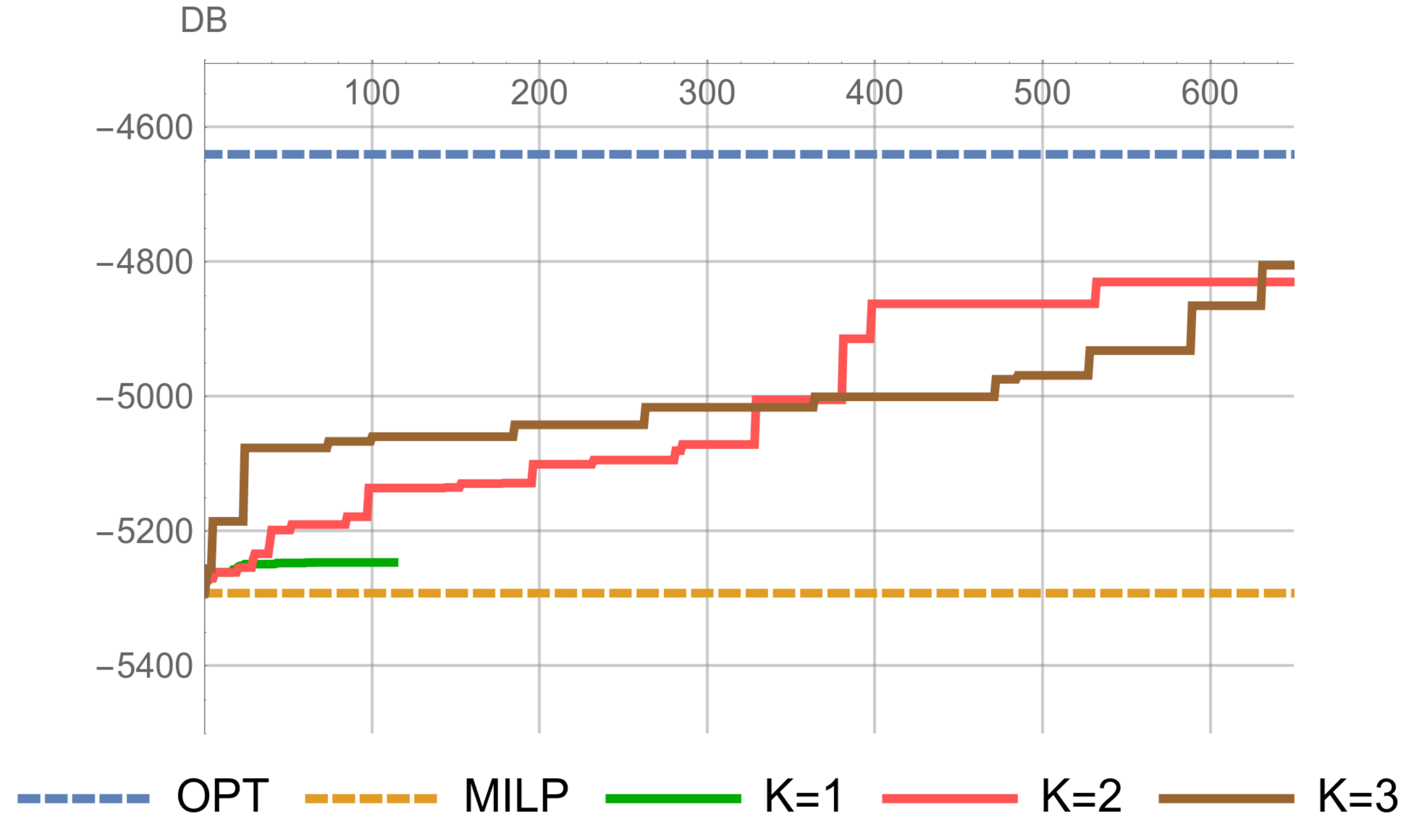}
 \end{minipage}
 \begin{minipage}{0.45\textwidth}
  \centering
  \includegraphics[height=3.5cm]{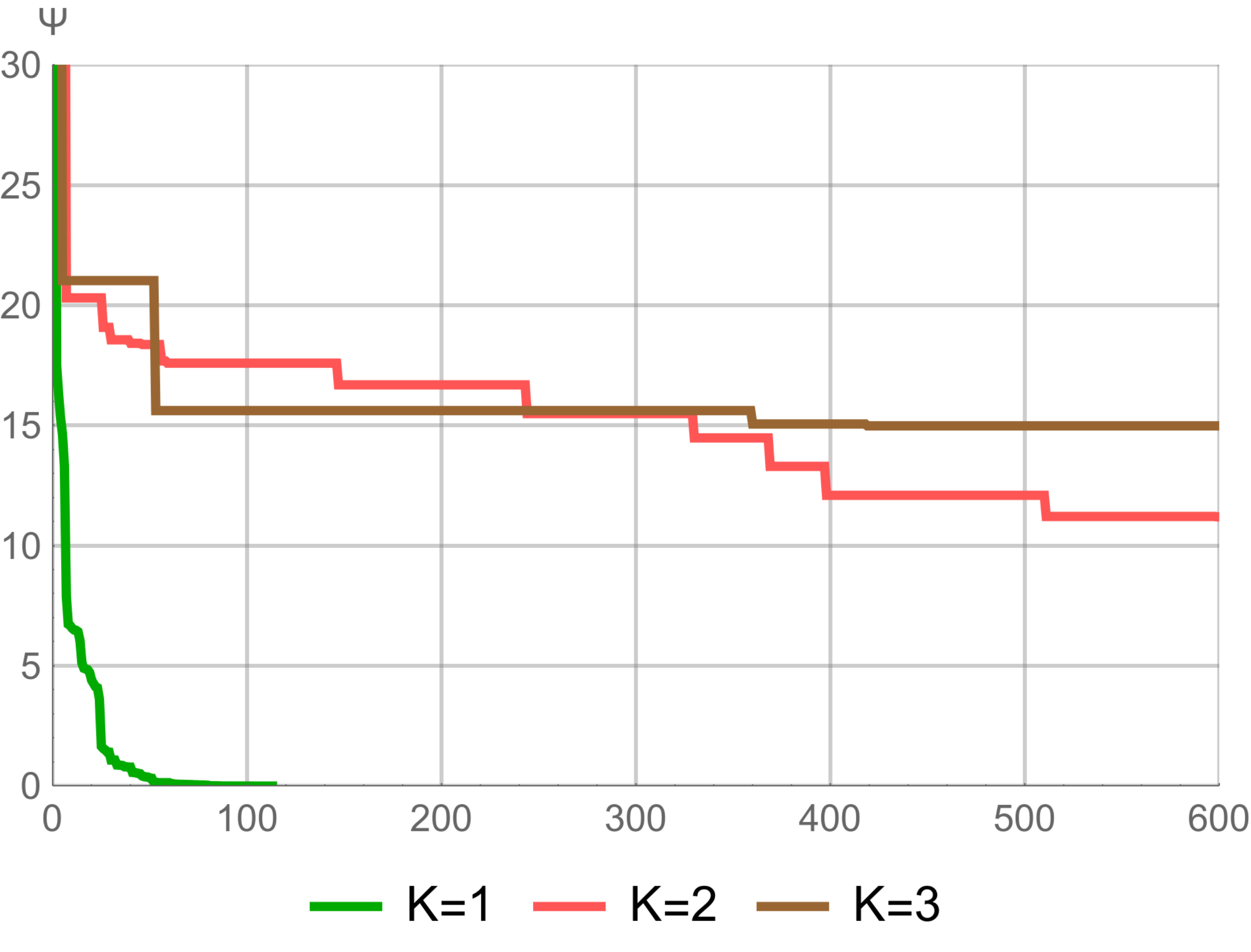}
 \end{minipage}
 \caption{Results of Algorithm~\ref{algo:genbenders} for the instance \texttt{genpooling\_lee1} for different choices
  of $K$. The first plot shows the progress of the proved dual bound and the second plot the value of $\Psi$ for the first $600$
  iterations. The blue line is the optimal solution value of the \MINLP and the yellow line that of the \MILP
  relaxation.}
 \label{fig:genpooling_lee1}
\end{figure}

\subsection{Convergence}

In the following, we show that the dual bounds obtained by Algorithm~\ref{algo:genbenders} converges to the optimal
value of the $K$-surrogate dual. The idea of the proof is similar to the one presented by~\cite{Karwan1984} for
the case of $K=1$ and linear constraints.

\begin{theorem} \label{theorem:generalalgo}
  Denote by $\{(\lambda^t, \Psi^t)\}_{t \in \N}$ the sequence of values obtained after
  solving~\eqref{eq:gbenders:master:disj} in Algorithm~\ref{algo:genbenders} for $\epsilon = 0$. The algorithm either
  \begin{itemize}
   \item[\textbf{(a)}] terminates in $T$ steps, i.e., $\Psi^T = 0$, in which case $\max_{1 \le t \le T}
     \gsfct{K}(\lambda^t)$ is equal to~\eqref{eq:gsd}, or
   \item[\textbf{(b)}] $\sup_{t \geq 1} \gsfct{K}(\lambda^t)$ is equal to~\eqref{eq:gsd}.
  \end{itemize}
\end{theorem}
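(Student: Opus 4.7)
The plan is to treat cases (a) and (b) separately, with (b) being the technical heart. For case (a), note that $\Psi^T = 0$ certifies that no vector $\lambda = (\lambda^1,\ldots,\lambda^K)$ with $\|\lambda^k\|_1 \le 1$ can strictly separate every $\bar x \in \pointset = \{x^0,\ldots,x^{T-1}\}$ through the $K$ disjuncts of~\eqref{eq:gbenders:master:disj}. Equivalently, for every such $\lambda$ there exists $\bar x \in \pointset$ with $\sum_i \lambda^k_i g_i(\bar x) \le 0$ for \emph{all} $k$, i.e., $\bar x \in \gsset{K}_\lambda$, so $\gsfct{K}(\lambda) \le c^\T \bar x \le D$. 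Scale invariance of $\gsfct{K}$ lifts this to all of $\R^{K\nnlconss}_+$, yielding $\sup_\lambda \gsfct{K}(\lambda) \le D = \max_{1 \le t \le T} \gsfct{K}(\lambda^t) \le \sup_\lambda \gsfct{K}(\lambda)$, hence equality.

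For case (b) I will first show $\Psi^t \to 0$ and then use this to prove $\sup_t \gsfct{K}(\lambda^t) = \sup_\lambda \gsfct{K}(\lambda)$. Since every iteration only appends constraints to~\eqref{eq:gbenders:master:disj}, the sequence $\Psi^t$ is nonincreasing and converges to some $\Psi^\infty \ge 0$. To show $\Psi^\infty = 0$, exploit compactness of $\minlprelaxset$ and of the normalized aggregation domain $\{\lambda \ge 0 : \|\lambda^k\|_1 \le 1\ \forall k\}$ (a product of simplices) to extract a subsequence with $x^{s_l} \to x^\infty$ and $\lambda^{s_l} \to \lambda^\infty$, writing $\lambda^{s_l,k}$ for the $k$-th block. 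Feasibility $x^{s_l} \in \gsset{K}_{\lambda^{s_l}}$ gives $\sum_i \lambda^{s_l,k}_i g_i(x^{s_l}) \le 0$ for each $k$, so by continuity of the $g_i$, $\max_k \sum_i \lambda^{\infty,k}_i g_i(x^\infty) \le 0$. On the other hand, the master constraint generated by $x^{s_{l_0}}$ at iteration $s_l$ (with $l_0 < l$) yields $\max_k \sum_i \lambda^{s_l,k}_i g_i(x^{s_{l_0}}) \ge \Psi^{s_l}$; letting $l \to \infty$ first (continuity of the maximum of finitely many continuous functions) gives $\max_k \sum_i \lambda^{\infty,k}_i g_i(x^{s_{l_0}}) \ge \Psi^\infty$, and then letting $l_0 \to \infty$ gives $\max_k \sum_i \lambda^{\infty,k}_i g_i(x^\infty) \ge \Psi^\infty$. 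Combining both inequalities forces $\Psi^\infty \le 0$, so $\Psi^\infty = 0$.

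With $\Psi^t \to 0$ established, take an arbitrary $\lambda^* \in \R^{K\nnlconss}_+$; by scale invariance we may assume $\|\lambda^{*,k}\|_1 \le 1$. Then $(\lambda^*, \Psi)$ with $\Psi = \min_{\bar x \in \pointset_t}\max_k \sum_i \lambda^{*,k}_i g_i(\bar x)$ is feasible for every master problem, so $\Psi^{t+1} \ge \min_{\bar x \in \pointset_t}\max_k \sum_i \lambda^{*,k}_i g_i(\bar x)$; since this minimum is nonincreasing in $t$ and $\Psi^{t+1} \to 0$, we obtain $\inf_s \max_k \sum_i \lambda^{*,k}_i g_i(x^s) \le 0$. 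If the infimum is already achieved at some finite $s$, then $x^s \in \gsset{K}_{\lambda^*}$ and $\gsfct{K}(\lambda^*) \le c^\T x^s \le \sup_t \gsfct{K}(\lambda^t)$; otherwise, extract a subsequence $x^{s_l} \to x^\infty$ along which the maximum decreases to zero from above, so by continuity $\max_k \sum_i \lambda^{*,k}_i g_i(x^\infty) = 0$ places $x^\infty \in \gsset{K}_{\lambda^*}$, and $\gsfct{K}(\lambda^*) \le c^\T x^\infty = \lim_l c^\T x^{s_l} \le \sup_t \gsfct{K}(\lambda^t)$. Taking the supremum over $\lambda^*$ and combining with the trivial reverse inequality finishes (b).

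The main obstacle is the step $\Psi^\infty = 0$. Because $\gsfct{K}$ is only lower semi-continuous (Proposition~\ref{proposition:lsc}), limits cannot be passed through the objective $c^\T x$ directly, so the argument must instead be driven by the constraints themselves via a nested limit $l \to \infty$ before $l_0 \to \infty$ in a diagonal-style extraction. A secondary subtlety is that the index $k$ attaining the disjunctive maximum can change along the sequence; this is handled because $\lambda \mapsto \max_k \sum_i \lambda^k_i g_i(\bar x)$ is continuous as a pointwise maximum of finitely many continuous functions, which is precisely what makes the limit pass cleanly through the disjunctions of~\eqref{eq:gbenders:master:disj}.
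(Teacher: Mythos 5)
Your proof is correct and follows essentially the same route as the paper's: case (a) via the non-separability certificate $\Psi^T=0$, and case (b) by first driving $\Psi^t\to 0$ through compactness of $\minlprelaxset$ and the normalized multiplier set together with continuity of $\lambda \mapsto \max_k \sum_i \lambda^k_i g_i(\bar x)$, then using feasibility of any normalized $\lambda^*$ in the master problem to extract a point of $\gsset{K}_{\lambda^*}$ whose objective is bounded by $\sup_t \gsfct{K}(\lambda^t)$. The only quibble is that in the unattained-infimum branch the limit value need not equal zero (the infimum could be negative), but since it is $\le 0$ by continuity your conclusion $x^\infty \in \gsset{K}_{\lambda^*}$ stands unchanged.
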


\begin{proof}
  Let $OPT$ be the optimal value of~\eqref{eq:gsd} and let $x^t \in \minlprelaxset$ be an optimal solution obtained from solving
  $\gsfct{K}(\lambda^t)$ at iteration $t$.

  \begin{itemize}[listparindent=0.6cm]
    \item[\textbf{(a)}] If the algorithm terminates after $T$ iterations, i.e., $\Psi^T = 0$,
    then there is at least one point $x^1,\ldots,x^T$ that is feasible for $\gsfct{K}(\lambda)$ for any choice
    $\lambda \in \R^{K \nnlconss}$. This implies $OPT=\max_{1 \le t \le T} \{ c^\T x^t \}$.
    \item[\textbf{(b)}] Now assume that the algorithm does not converge in
      a finite number of steps, i.e., $\Psi^t > 0$ for all $t \ge 1$. Then,
    there are converging subsequences
    \begin{itemize}
      \item $\{\Psi^l\}_{l \in \N} \subseteq \{ \Psi^t \}_{t \in \N}$ such that $\lim_{l \rightarrow \infty} \Psi^l = \Psi^* \ge 0$ because $\Psi^t \ge \Psi^{t+1}$ and $\Psi^t \ge 0$ hold for all $t$,
      \item $\{ \lambda^l \}_{l \in \N} \subseteq \{ \lambda^t \}_{t \in \N}$ such that $\lim_{l \rightarrow \infty} \lambda^l = \lambda^*$ because $\norm{\lambda^t}_1 \le 1$, and
      \item $\{ x^l \}_{l \in \N} \subseteq \{ x^t \}_{t \in \N}$ such that $\lim_{l \rightarrow \infty} x^l = x^*$ because $\{x^t\} \subseteq \minlprelaxset$, which is assumed to be compact.
    \end{itemize}

    First, we show $\Psi^* = 0$. Note that $x^l$ is an optimal solution to $\gsfct{K}(\lambda^l)$. This means that $x^l$ satisfies all
    aggregation constraints, i.e., $\sum_{i \in \nlconssidx} \lambda_{k\nnlconss + i}^l \, g_i(x^l) \le 0$ for all $k = 1, \ldots, K$,
    which is equivalent to the inequality $\max_{1 \le k \le K} \sum_{i \in \nlconssidx} \lambda_{k\nnlconss + i}^l \, g_i(x^l) \le 0$. After
    solving~\eqref{eq:gbenders:master:disj}, we know that $\Psi^l$ is equal to the minimum violation of the disjunction constraints for
    the points $x^1,\ldots,x^{l-1}$. This implies the inequality
    \begin{equation*}
      \Psi^l = \min_{1 \le t \le l-1} \max_{1 \le k \le K} \sum_{i \in \nlconssidx} \lambda_{k\nnlconss + i}^l \, g_i(x^t) \le \max_{1 \le k \le K} \sum_{i \in \nlconssidx} \lambda_{k\nnlconss + i}^l \, g_i(x^{l-1})  ,
    \end{equation*}
    which uses the fact that the minimum over all points $x^1,\ldots,x^{l-1}$ is bounded by the value for $x^{l-1}$. Both
    inequalities combined show that
    \begin{equation*}
      \max_{1 \le k \le K} \sum_{i \in \nlconssidx} \lambda_{k\nnlconss + i}^l \, g_i(x^l) \le 0 < \psi^l \le \max_{1 \le k \le K} \sum_{i \in \nlconssidx} \lambda_{k\nnlconss + i}^l \, g_i(x^{l-1})
    \end{equation*}
    for all $l \ge 0$. Using the continuity of $g_i$ and the fact that the maximum of finitely many continuous functions is
    continuous, we obtain
    \begin{equation*}
      \max_{1 \le k \le K} \sum_{i \in \nlconssidx} \lambda_{k\nnlconss + i}^* \, g_i(x^*) \le 0 < \psi^* \le \max_{1 \le k \le K} \sum_{i \in \nlconssidx} \lambda_{k\nnlconss + i}^* \, g_i(x^*) \,
    \end{equation*}
    which shows $\Psi^* = 0$.

    Next, we show that $\sup_{t \geq 1} \gsfct{K}(\lambda^t) = OPT$.
    Clearly, $\sup_{t \geq 1} \gsfct{K}(\lambda^t) \leq OPT$.
    Let us now prove that $\sup_{t \geq 1} \gsfct{K}(\lambda^t) \geq OPT$.

    Take any $\epsilon > 0$ and let $\bar \lambda$ be such that $\gsfct{K}(\bar
    \lambda) \ge OPT - \epsilon$ and $\|\bar \lambda^k\| \leq 1$ for all $k \in
    \{1, \ldots, K\}$.
    By definition,
    \begin{equation*}
      \Psi^l \geq \min_{1 \le t \le l-1} \max_{1 \le k \le K} \sum_{i \in
      \nlconssidx} \bar \lambda_{k\nnlconss + i} \, g_i(x^t).
    \end{equation*}
    Computing the limit when $l$ goes to infinity, we obtain
    \begin{equation*}
      0 \geq \inf_{1 \le t} \max_{1 \le k \le K} \sum_{i \in
      \nlconssidx} \bar \lambda_{k\nnlconss + i} \, g_i(x^t).
    \end{equation*}
    Let $\bar x$ be $x^{t_0}$ if the infimum is achieved at $t_0$ or $x^*$ if
    the infimum is not achieved.
    Notice that
    \begin{equation*}
      \max_{1 \le k \le K} \sum_{i \in \nlconssidx} \bar
      \lambda_{k\nnlconss + i} \, g_i(\bar x) \leq 0.
    \end{equation*}
    This last inequality implies that $\bar x$ is feasible for $\gsfct{K}(\bar \lambda)$.
    Hence,
    \[
      OPT - \epsilon \leq \gsfct{K}(\bar \lambda) \leq c^\T \bar x \leq \sup_{t \geq 1} \gsfct{K}(\lambda^t).
    \]
    Since $\epsilon > 0$ is arbitrary, we conclude that $\sup_{t \geq 1} \gsfct{K}(\lambda^t) \geq OPT$.
  \end{itemize}
\end{proof}

The proof of Theorem~\ref{theorem:generalalgo} shows that $\{\Psi^t\}_{t \in \N}$ always converge to zero. A direct
consequence of this fact is that the Algorithm~\ref{algo:genbenders} converges in finite steps for any $\epsilon > 0$.

We now discuss computational enhancements meant for improving the performance of the proposed algorithm to solve the
$K$-surrogate dual. As in the case $K=1$, we also report techniques that we did not include in our final implementation.

\subsection{Multiplier symmetry breaking}
\label{section:generalalgo:symmetry}

One difficulty of optimizing the $K$-surrogate dual is that~\eqref{eq:gbenders:master:disj}
and~\eqref{eq:gbenders:master:milp} might contain many equivalent solutions.
For example, any permutation $\pi$ of the set $\{1,\ldots,K\}$ implies that the sub-problem $\gsfct{K}(\lambda)$ with
$\lambda = (\lambda^1,\ldots,\lambda^K)$ is equivalent to $\gsfct{K}(\lambda^{\pi})$ with $\lambda^\pi =
(\lambda^{\pi_1},\ldots,\lambda^{\pi_K})$. This symmetry slows down Algorithm~\ref{algo:genbenders}, as it heavily
impacts the solution time of the master problem. We refer to~\cite{Margot2009} for an overview of symmetry in integer
programming.

One way to overcome the problem of equivalent solutions is to explicitly break symmetry in the
$\lambda^1,\ldots,\lambda^K$ vectors. One way is to add the constraints
\begin{equation}\label{eq:master:lexorder}
 \lambda^1 \succeq_{\text{lex}} \lambda^2 \succeq_{\text{lex}} \ldots \succeq_{\text{lex}} \lambda^K
\end{equation}
that enforce a lexicographical order on $\lambda^1,\ldots,\lambda^K$ in~\eqref{eq:gbenders:master:milp}.

Enforcing a lexicographical order $\lambda^1 \succeq_{\text{lex}} \lambda^2$ on continuous vectors $\lambda^1$ and
$\lambda^2$ can be modeled using the following constraints
\begin{equation}\label{eq:master:lexorder:symcons1}
 \begin{aligned}
   & \lambda^1_1 \ge \lambda^2_1 \text{,} \\
   \left(\lambda^1_1 = \lambda^2_1\right) \;\Rightarrow\; & \lambda^1_2 \ge \lambda^2_2 \text{,} \\
   \left(\lambda^1_1 = \lambda^2_1 \land \lambda^1_2 = \lambda^2_2\right) \;\Rightarrow\; & \lambda^1_3 \ge \lambda^2_3 \text{,} \\
   \ldots
 \end{aligned}
\end{equation}
which can be reformulated linearly with additional binary variables and big-M constraints. However, we observed that
adding~\eqref{eq:master:lexorder:symcons1} increases the complexity of~\eqref{eq:gbenders:master:milp} so much that it
is not possible anymore to solve it in a reasonable amount of time.
For this reason, we use only simple linear inequalities to partially break symmetry in the master problem. We propose
two alternative ways. First, the constraints
\begin{equation}\label{eq:master:lexorder:symcons2}
 \lambda^1_1 \ge \lambda^2_1 \ge \ldots \ge \lambda^K_1
\end{equation}
enforce that $\lambda^1,\ldots,\lambda^K$ are sorted with respect to the first component, i.e., the first nonlinear
constraint. The drawback of this sorting is that if $\lambda^k_1 = 0$ for all $k \in \{1,\ldots,K\}$, i.e., if all
aggregations in a given iteration ignore the first constraint, then~\eqref{eq:master:lexorder:symcons2} does not break
any of the symmetry of~\eqref{eq:gbenders:master:milp}.

Our second idea for breaking symmetry is to use
\begin{equation}\label{eq:master:lexorder:symcons3}
 \begin{aligned}
  \lambda^1_1 & \ge \lambda^k_1 & & \fa k \in \{2,\ldots,K\}  , \\
  \lambda^2_2 & \ge \lambda^k_2 & & \fa k \in \{3,\ldots,K\}  , \\
  & \ldots \\
  \lambda^{K-1}_{K-1} & \ge \lambda^K_{K-1}  ,
 \end{aligned}
\end{equation}
which has a natural interpretation if the vectors $\lambda^1,\ldots,\lambda^K$ are written as columns of a matrix
$\Lambda \subseteq \R_+^{\nnlconss \times K}$. The constraints~\eqref{eq:master:lexorder:symcons3} enforce that the
diagonal entries $\Lambda_{k,k}$ are not smaller than $\Lambda_{k,k'}$ for any $k' > k$.

In our experiments, we used the Benders algorithm for $K \in \{1,2,3\}$. We observed that for these small choices of
$K$, slightly better dual bounds could be computed when using~\eqref{eq:master:lexorder:symcons2} instead
of~\eqref{eq:master:lexorder:symcons3}.  Furthermore, we also observed that both symmetry breaking inequalities had only an
impact on the obtained dual bounds if the first nonlinear constraint was used in the best found solution of the Benders
algorithm.

\subsection{Early stopping of the master problem}
\label{section:generalalgo:earlystop}

Solving~\eqref{eq:gbenders:master:milp} to optimality in every iteration of the Benders algorithm is computationally
expensive for $K\geq 2$. On the one hand, the true optimal value of $\Psi$ is needed to decide whether the algorithm
terminated, i.e., $\Psi \le \epsilon$. On the other hand, to ensure progress of the Benders algorithm it is enough to
only compute a feasible point $(\Psi,\lambda^1,\ldots,\lambda^K)$ of~\eqref{eq:gbenders:master:milp} with $\Psi > 0$. We
balance these two opposing forces with the following early stopping method.

Given that~\eqref{eq:gbenders:master:milp} is a \MILP, we use branch and bound to solve it. During the tree search of
this algorithm, we have access to both a valid dual bound $\Psi_{d}$ and primal bound $\Psi_{p}$ such that the optimal
$\Psi$ is contained in $[\Psi_p,\Psi_d]$.
Note that the primal bound can be assumed to be nonnegative as the vector of zeros is always feasible
for~\eqref{eq:gbenders:master:milp}.
Furthermore, let $\Psi^t_d$ and $\Psi^t_p$ be the primal and dual bounds obtained from the master problem in iteration
$t$ of the Benders algorithm. We stop the master problem in iteration $t+1$ as soon as $\Psi^{t+1}_p \ge \alpha
\Psi^t_d$ holds for a fixed $\alpha \in (0,1]$. The parameter $\alpha$ controls the trade-off between proving a good
  dual bound $\Psi^{t+1}_d$ and saving time for solving the master problem. On the one hand, $\alpha = 1$ implies
\begin{equation*}
 \Psi^{t+1}_p \ge \alpha \Psi^{t}_d \ge \alpha \Psi^{t+1}_d = \Psi^{t+1}_d  ,
\end{equation*}
which can only be true if $\Psi^{t+1}_p = \Psi^{t+1}_d$ holds. This equality proves optimality of the master problem in
iteration $t+1$. On the other hand, setting $\alpha$ close to zero means that we would stop as soon as a feasible
solution to the master problem has been found. In our experiments, we observed that setting $\alpha$ to $0.2$ performs
well.

\subsection{Constraint filtering}
\label{section:generalalgo:constraintfiltering}

Even though it is not necessary to solve the master problem in every iteration to global optimality, its complexity
grows exponentially since a disjunction constraint of the form~\eqref{eq:gsd:logicor} is added in every iteration of the
algorithm. One way to alleviate this problem is to reduce the set of nonlinear constraints to only those that are needed
for a good quality solution of~\eqref{eq:gsd}. This set of constraints is unknown in advance and challenging to compute
because of the nonconvexity of the \MINLP.

We tested different filtering heuristics to preselect nonlinear constraints. We used the violation of the constraints
with respect to the \LP, \MILP, and convex \NLP relaxation of the \MINLP, as measures of ``importance'' of nonlinear
constraints. We also used the connectivity of nonlinear constraints in the variable-constraint graph\footnote{Bipartite
graph where each variable and each constraint are represented as nodes, and edges are included when a variable appears
in a constraint.} for discarding some constraints. Unfortunately, we could not identify a good filtering rule that
selects few nonlinear constraints and results in strong bounds for~\eqref{eq:gsd}.

However, we developed a way of capturing the idea of reducing the number of constraints considered in the master problem
without having to impose such a strong \emph{a-priori} filter on the constraints: an adaptive filtering, which we call
\emph{support stabilization}. This allows to improve the performance of the master problem without compromising the
quality of the generated dual bounds. We specify this next.

\subsection{Support stabilization}
\label{section:generalalgo:supportstablilization}

Direct implementations of Benders-based algorithms, much like column generation approaches, are known to suffer from
convergence issues. Deriving ``stabilization'' techniques that can avoid oscillations of the $\lambda$ variables and
tailing-off effects, among others, are a common goal for improving performance, see,
e.g.,~\cite{Merle1999},~\cite{Amor2009}, and~\cite{Ackooij2016}.

In the following, we present a \emph{support stabilization} technique to address the exponential increase in complexity
of the master problem~\eqref{eq:gbenders:master:milp} and to prevent the oscillations of the $\lambda$ variables.
Since restricting the support on the aggregation vectors allows us to solve the master problem orders of magnitudes
faster, we use the following strategy: once the Benders algorithm finds a multiplier vector that improves the overall
dual bound, we restrict the support to that of the improving dual multiplier. This restricts the search space and
improves solution times. Once stalling is detected (which corresponds to finding a local optimum of~\eqref{eq:gsd}),
we remove the support restriction until another multiplier vector that improves the dual bound is found.

This technique enables us to solve the master problem substantially faster and, at the same time, compute better bounds
on~\eqref{eq:gsd} in fewer iterations due to its \emph{stabilization} interpretation.

\subsection{Trust-region stabilization}
\label{section:generalalgo:trustregion}

In the previous section, we presented a form of stabilization for our algorithm, meant for both alleviating some of the
computational burden when solving the master problem and preventing the support of subsequent variables to
deviate. Nonetheless, the non-zero entries of the $\lambda$ vectors can (and do, in practice) vary significantly from
iteration to iteration.
To remedy this, we incorporated a classic stabilization technique: a \emph{box trust-region}
stabilization, see~\cite{Conn2000}. Given a reference solution $(\hat{\lambda}^1, \ldots, \hat{\lambda}^k)$, we impose the
following constraint in~\eqref{eq:gbenders:master:milp}
\begin{equation*}
  \| (\lambda^1, \ldots, \lambda^k) - (\hat{\lambda}^1, \ldots, \hat{\lambda}^k) \|_\infty \leq \delta
\end{equation*}
for some parameter $\delta$. This prevents the $\lambda$ variables from oscillating excessively, and carefully updating
$(\hat{\lambda}^1, \ldots, \hat{\lambda}^k)$ and $\delta$ can maintain the convergence guarantees of the algorithm
proven in Theorem~\ref{theorem:generalalgo}. In our implementation, we maintain a fixed $(\hat{\lambda}^1, \ldots,
\hat{\lambda}^k)$ until we obtain a bound improvement or the algorithm stalls. When any of this happens, we remove the
box and compute a new $(\hat{\lambda}^1, \ldots, \hat{\lambda}^k)$ with~\eqref{eq:gbenders:master:milp} without any
stabilization added.

\begin{remark}
  In our experiments, we used another stabilization technique inspired by column generation's
  \emph{smoothing} by~\cite{Wentges1997} and~\cite{Neame2010}. Let $\lambda^{best}$ be the best found primal solution so far
  and let $\lambda^{new}$ be the solution of the current master problem. Instead of using $\lambda^{new}$ as a new
  multiplier vector, we choose as next aggregation vector a convex combination between $\lambda^{best}$ and
  $\lambda^{new}$.  This way we can control the distance between the new aggregation vector and $\lambda^{best}$. While
  this stabilization technique improved the performance of the Benders algorithm with respect to the algorithm with no
  stabilization, it performed significantly worse than the trust-region stabilization. Therefore, we did not include it
  in our final implementation.
\end{remark}

\section{Computational experiments}
\label{section:experiments}

In this section, we present a computational study of the classic and generalized surrogate duality on publicly
available instances of the \minlplib~\cite{MINLPLIB}.
We conduct three main experiments to answer the following questions:
\begin{enumerate}
  \item \experimentRoot{}: How much of root gap with respect to the \MILP relaxation can be closed by using the classic
    and $K$-surrogate dual? For how many instances could the classic and generalized Benders algorithm successfully
    terminate?
  \item \experimentAlgo{}: How much do the ideas of Section~\ref{section:generalalgo} improve the performance of the
    generalized Benders algorithm?
  \item \experimentDual{}: Can the generalized Benders algorithm improve on the dual bounds obtained by the \MINLP solver
    \scip?
\end{enumerate}

Our ideas are embedded in the \MINLP solver \scip~\cite{SCIP}. We refer
to~\cite{Achterberg2007a,Vigerske2013,Vigerske2017} for an overview of the general solving algorithm and \MINLP features
of \scip.

\subsection{Experimental setup}

All three experiments use Algorithm~\ref{algo:genbenders} to compute a tighter dual bound in the root node. As
discussed in Section~\ref{section:intro}, the quality of the surrogate relaxation strongly depends on the constructed
linear relaxation of~\eqref{eq:minlp}. Therefore, the Benders algorithm is called after the root node has been
completely processed by \scip. All generated and initial linear inequalities are added to $\sset_\lambda$.

For the \experimentRoot{} experiment, we run Algorithm~\ref{algo:genbenders} for one hour for each choice of $K \in
\{1,2,3\}$. To measure how much more root gap can be closed by using $K+1$ instead of $K$, we use the best found
aggregation vector of $K$ as an initial point for $K+1$. This ensures that Algorithm~\ref{algo:genbenders} always
finds a dual bound for $K+1$ that is at least as good as the one for $K$.

In contrast to the first experiment, in the \experimentAlgo{} experiment we focus on $K=3$ and do not start with an
initial point for the aggregation vector. Considering only one $K$ allows us to more easily analyze the impact of each
component of the Benders algorithm. We compare the following settings:
\begin{itemize}
  \item \algoDefault{}: Benders algorithm applying all techniques that have been presented in
    Section~\ref{section:generalalgo}.
  \item \algoPlain{}: Plain version of the Benders algorithm. It uses none of the techniques of
    Section~\ref{section:generalalgo}.
  \item \algoNoTR{}: Same as \algoDefault{} but without using the trust-region of
    Section~\ref{section:generalalgo:trustregion} and support stabilization
    of Section~\ref{section:generalalgo:supportstablilization}.
  \item \algoNoSupp{}: Same as \algoDefault{} but without using the support stabilization.
  \item \algoNoEarly{}: Same as \algoDefault{} but without using early termination for the master problem, described in
    Section~\ref{section:generalalgo:earlystop}.
\end{itemize}
Each of the five settings uses a time limit of one hour.

Finally, in the \experimentDual{} experiment we evaluate how much the dual bounds obtained by \scip with default
settings can be improved by the Algorithm~\ref{algo:genbenders}. First, we collect the dual bounds for all instances that
could not be solved by \scip within three hours. Afterward, we apply Algorithm~\ref{algo:genbenders} for $K=3$, a
time limit of three hours, and set a target dual bound (see Section~\ref{section:earlystop}) of
\begin{equation*}
  D + (P-D) \cdot 0.2,
\end{equation*}
where $D$ is the dual bound obtained by default \scip and $P$ be the best known primal bound reported in the
\minlplib. This means that we aim for a gap closed reduction of at least $20$\% and early stop each sub-problem in
Algorithm~\ref{algo:genbenders} that will provably lead to a smaller reduction.

During all three experiments, we use a gap limit of $10^{-4}$ for each sub-problem of the Benders algorithm to reduce
the impact of tailing-off effects. Additionally, we chose a dual feasibility tolerance of $10^{-8}$ (\scip's default is
$10^{-7}$) and a primal feasibility tolerance of $10^{-7}$ (\scip's default is $10^{-6}$).

\paragraph{Implementation.}

We extended \scip by a (relaxator) plug-in that solves the $K$-surrogate dual problem after the root node has been completely
processed by \scip, i.e., no more cutting planes or variable bound tightenings could be found.

The trust-region and support stabilization have been implemented as follows. Both stabilization methods are applied once
an improving aggregation $\lambda^*$ could be found. Each entry $\lambda_i$ with $\lambda_i^* = 0$ is fixed to
zero. Otherwise, the domain of $\lambda_i$ is restricted to the interval
\begin{equation*}
  \left[\max\{0,\lambda_i^* - 0.1\},\min\{1,\lambda_i^* + 0.1\}\right].
\end{equation*}
Once a new improving solution has been found, we update the trust region accordingly. We remove the trust region and
support stabilization in case no improving solution could be found for $20$ iterations.

\paragraph{Test set.}

We used the publicly available instances of the \minlplib~\cite{MINLPLIB}, which at time of the experiments
contained~$1683$ instances.
This includes among others instances from the first \minlplib, the nonlinear programming library \globallib, and the
CMU-IBM initiative \href{www.minlp.org}{minlp.org}~\cite{MINLPDOTORG}.
We selected the instances that were available in OSiL format and consisted of nonlinear expressions that could be
handled by \scip, in total $1671$~instances.

\paragraph{Gap closed.}

We use the following measure to compare dual bounds relative to a given primal bound. Let $d_1 \in \R$ and $d_2 \in \R$
be two dual bounds for~\eqref{eq:minlp} and $p \in \R$ a reference primal bound, e.g., the optimal solution value
of~\eqref{eq:minlp}, that is reported in the \minlplib. The function $\GC : \R^3 \rightarrow [-1,1]$ defined as
\begin{equation*}
  \GC(p,d_1,d_2) :=
  \begin{cases}
    0, & \text{ if } d_1 = d_2 \\ +1 - \frac{p - d_1}{p - d_2}, & \text{ if } d_1 > d_2 \\ -1 + \frac{p - d_2}{p - d_1},
    & \text{ if } d_1 < d_2
  \end{cases}
\end{equation*}
measures the \emph{gap closed} improvement.

\paragraph{Performance evaluation.}

To evaluate algorithmic performance over a large test set of benchmark instances, we compare geometric means, which
provide a measure for relative differences. This avoids results being dominated by outliers with large absolute values
as is the case for the arithmetic mean. In order to also avoid an over-representation of differences among very small
values, we use the shifted geometric mean.
The \textit{shifted geometric mean} of values $v_1, \ldots, v_N \ge 0$ with shift $s \ge 0$ is defined as
\begin{equation*}
 \left( \prod_{i=1}^N (v_i + s) \right)^{1/N} - s.
\end{equation*}
See also the discussion in~\cite{Achterberg2007a,AchterbergWunderling2013,Hendel2014}.
As shift values we use 10~seconds for averaging over running time and $5$\% for averaging over gap closed values.

\paragraph{Hardware and software.}

The experiments were performed on a cluster of 64bit Intel Xeon X5672~CPUs at 3.2\,GHz with 12\,MB cache and 48\,GB main
memory.
In order to safeguard against a potential mutual slowdown of parallel processes, we ran only one job per node at a time.
We used a development version of \scip with \cplex~12.8.0.0 as \LP~solver~\cite{Cplex}, the algorithmic differentiation
code \cppad~20180000.0~\cite{CppAD}, the graph automorphism package \bliss~0.73~\cite{bliss} for detecting \MILP symmetry,
and \ipopt~3.12.11 with \mumps~4.10.0~\cite{Mumps} as \NLP solver~\cite{WachterBiegler2006,Ipopt}.

\subsection{Computational results}

In the following, we present results for the above described \experimentRoot{}, \experimentAlgo{}, and \experimentDual{}
experiments.

\paragraph{\experimentRoot{} Experiment.}

From all instances of \minlplib, we filter those for which \scip's \MILP relaxation proves optimality in the root node,
no primal solution is known, or \scip aborted due to numerical issues in the \LP solver. This leaves $\rootAllSize$
instances for the \experimentRoot{} experiment.

Figure~\ref{fig:root:scatter} visualizes the achieved gap closed values via scatter plots. The plots show that for the
majority of the instances we can close significantly more gap than the \MILP relaxation. There are
$\rootGapWorseKOneKTwo$ instances for which $K=2$ closes at least $1$\% more gap than $K=1$, and even more gap can be
closed using $K=3$.
There are $\rootGapNoGapAllKOneKTwo$ instances for which $K=1$ could not close any gap, but $K=2$ could close some. On
$\rootGapNoGapAllKTwoKThree$ additional instances $K=3$ could close gap, which was not possible with $K=2$.
Finally, comparing $K=2$ and $K=3$ shows that on $\rootGapWorseKTwoKThree$ instances $K=3$ could close at least $1$\%
more gap than $K=2$. Interestingly, for most of these instances $K=2$ could already close at least $50\%$ of the root
gap.

Aggregated results are reported in Table~\ref{table:root:gapclosed} and we refer to Table~\ref{table:root:detailed} in
the appendix for detailed instance-wise results. First, we observe an average gap reduction of $\rootGapAllKOne$\% for
$K=1$, $\rootGapAllKTwo$\% for $K=2$, and $\rootGapAllKThree$\% for $K=3$, respectively. The same tendency is true when
considering groups of instances that are defined by a bound on the minimum number of nonlinear constraints. For example,
for the $\rootNconssTwentySize$ instances with at least $20$ nonlinear constraints after preprocessing, $K=2$ and $K=3$
close $\floatsub{\rootGapNconssTwentyKTwo}{\rootGapNconssTwentyKOne}$\% and
$\floatsub{\rootGapNconssTwentyKThree}{\rootGapNconssTwentyKOne}$\% more gap than $K=1$,
respectively. Table~\ref{table:root:gapclosed} also reports results when filtering out the $\rootGapNotaffected$
instances for which less than $1$\% gap was closed by Algorithm~\ref{algo:genbenders}. We consider these instances
\emph{unaffected}. On the $\rootAffectedSize$ affected instances we close on average up to $\rootGapAffectedKThree$\% of
the gap, and we see that $K=3$ closes $\floatsub{\rootGapAffectedKTwo}{\rootGapAffectedKOne}$\% more gap than $K=2$ and
$\floatsub{\rootGapAffectedKThree}{\rootGapAffectedKOne}$\% more than $K=1$.

Our results show that using surrogate relaxations has a tremendous impact on reducing the root gap. Additionally, we
observe that using the generalized surrogate dual for $K=2$ and $K=3$ reduces significantly more gap in the root node
than the classic surrogate dual.

\begin{figure}[tb]
 \centering
 \begin{minipage}{0.40\textwidth}
  \centering \includegraphics[width=\textwidth]{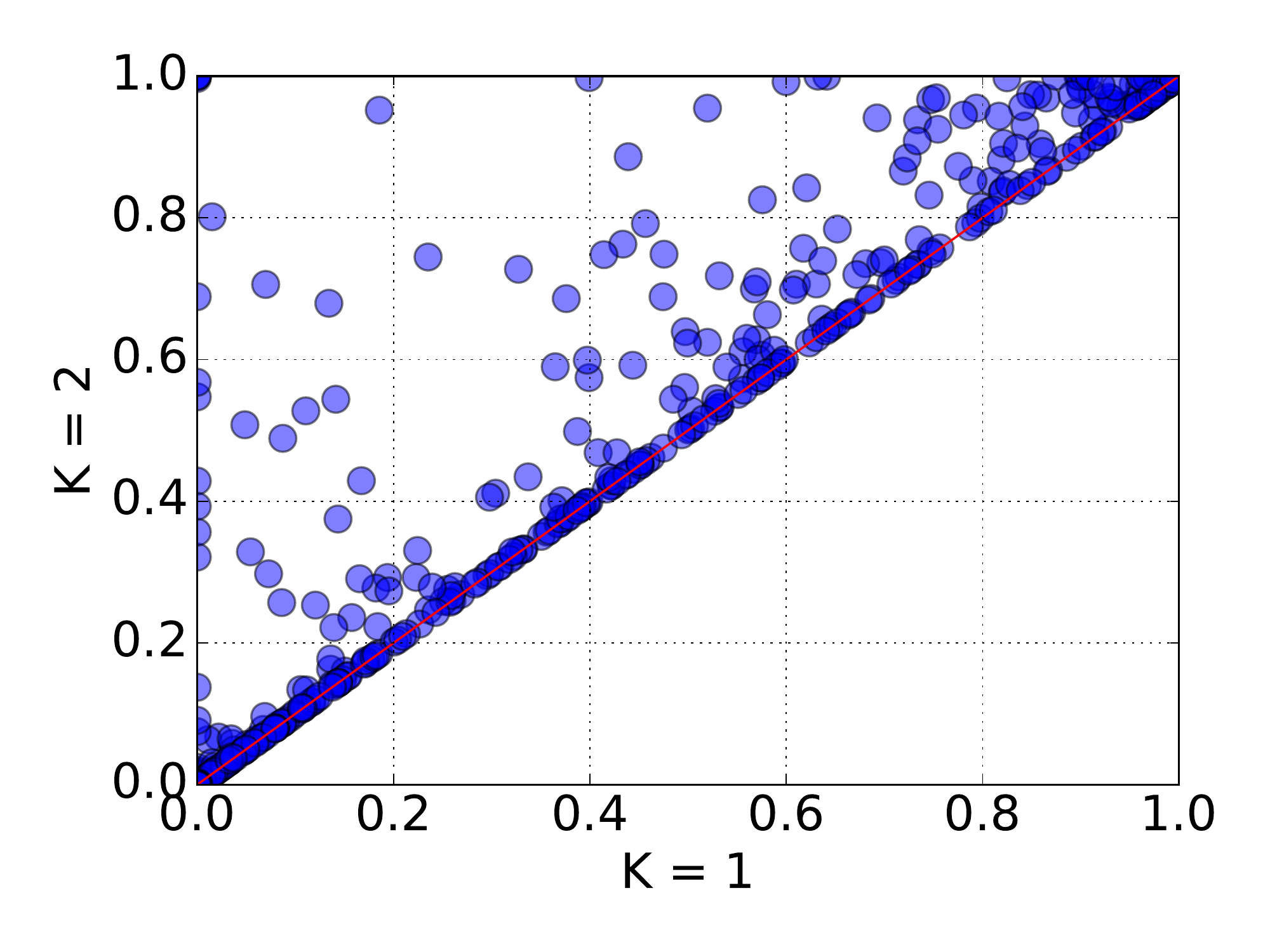}
 \end{minipage}
 \begin{minipage}{0.40\textwidth}
  \centering \includegraphics[width=\textwidth]{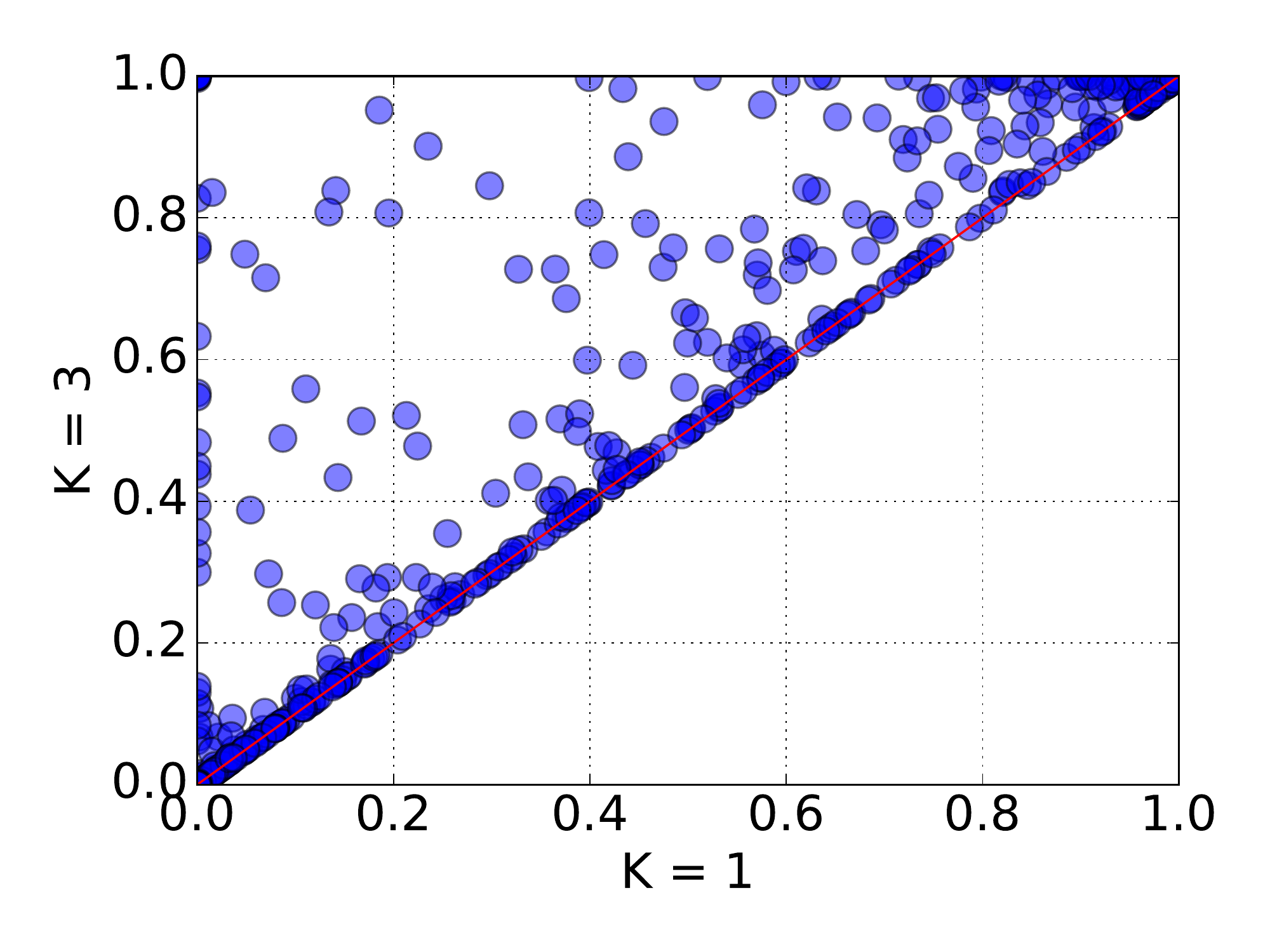}
 \end{minipage}

 \begin{minipage}{0.40\textwidth}
  \centering \includegraphics[width=\textwidth]{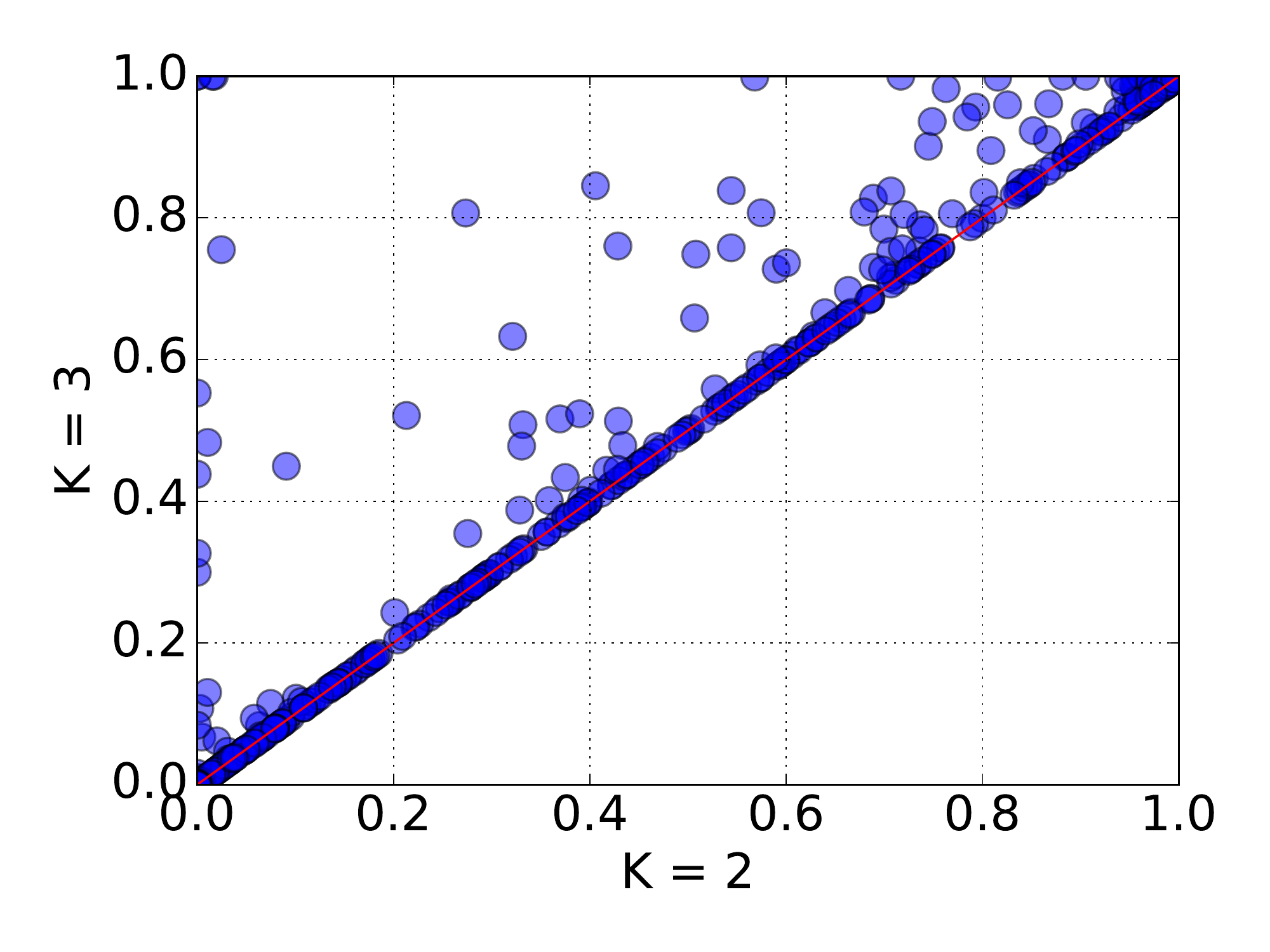}
 \end{minipage}
 \caption{Scatter plot comparing the root gap closed values of the \experimentRoot{} experiment comparing $(K=1,K=2)$,
   $(K=1,K=3)$, and $(K=2,K=3)$.  For example, each point $(x,y) \in [0,1]^2$ in the top left plot corresponds to an
   instance for which $100x$\% of the gap is closed by $K=1$ and $100y$\% closed by $K=2$.}
 \label{fig:root:scatter}
\end{figure}

\begin{table}
 \centering
 \begin{tabular*}{0.7\textwidth}{@{\extracolsep{\fill}}rrrrr}
  \toprule group & \colinstances & $K=1$ & $K=2$ & $K=3$ \\ \midrule \texttt{ALL} & $\rootAllSize$ & $\rootGapAllKOne$\%
  & $\rootGapAllKTwo$\% & $\rootGapAllKThree$\% \\ $\nnlconss \ge 10$ & $\rootNconssTenSize$ &
  $\rootGapNconssTenKOne$\% & $\rootGapNconssTenKTwo$\% & $\rootGapNconssTenKThree$\% \\ $\nnlconss \ge 20$ &
  $\rootNconssTwentySize$ & $\rootGapNconssTwentyKOne$\% & $\rootGapNconssTwentyKTwo$\% & $\rootGapNconssTwentyKThree$\%
  \\ $\nnlconss \ge 50$ & $\rootNconssFiftySize$ & $\rootGapNconssFiftyKOne$\% & $\rootGapNconssFiftyKTwo$\% &
  $\rootGapNconssFiftyKThree$\% \\ \midrule \texttt{AFFECTED} \\ \midrule \texttt{ALL} & $\rootAffectedSize$ &
  $\rootGapAffectedKOne$\% & $\rootGapAffectedKTwo$\% & $\rootGapAffectedKThree$\% \\ $\nnlconss \ge 10$ &
  $\rootNconssTenAfSize$ & $\rootGapNconssTenAfKOne$\% & $\rootGapNconssTenAfKTwo$\% & $\rootGapNconssTenAfKThree$\%
  \\ $\nnlconss \ge 20$ & $\rootNconssTwentyAfSize$ & $\rootGapNconssTwentyAfKOne$\% & $\rootGapNconssTwentyAfKTwo$\%
  & $\rootGapNconssTwentyAfKThree$\% \\ $\nnlconss \ge 50$ & $\rootNconssFiftyAfSize$ & $\rootGapNconssFiftyAfKOne$\%
  & $\rootGapNconssFiftyAfKTwo$\% & $\rootGapNconssFiftyAfKThree$\% \\ \bottomrule
 \end{tabular*}
 \caption{Aggregated results for the \experimentRoot{} experiment. A row $\nnlconss \ge x$ considers all instances
   that have at least $x$ many nonlinear constraints. The second part of the table only considers instances for which at
   least one setting closes at least $1$\% of the root gap.}
 \label{table:root:gapclosed}
\end{table}

\paragraph{\experimentAlgo{} Experiment.}

Table~\ref{table:algo:gapclosed} reports aggregated results for the \experimentAlgo{} experiment, which, similar to
Figure~\ref{fig:root:scatter}, are visualized in Figure~\ref{fig:algo:scatter}. We refer to
Table~\ref{table:algo:detailed} in the appendix for detailed instance-wise results.

First, we observe that the \algoDefault{} performs significantly better than
\algoPlain{}. Table~\ref{table:algo:gapclosed} shows that on $\algoWinsPlainAll$ of the $\algoGroupSizeAll{}$ affected
instances \algoDefault{} closes at least 1\% more gap than \algoPlain{}.  Only on $\algoLosesPlainAll$ instances
\algoPlain{} closes more gap, but over all instances it closes on average $\floatsub{100.0}{\algoRelgapPlainAll}\%$ less
gap than \algoDefault{}. On instances with a larger number of nonlinear constraints, \algoDefault{} performs even
better: on the $\algoGroupSizeNconssFifty$ instances with at least $50$ nonlinear constraints, \algoDefault{} computes
$\algoWinsPlainNconssFifty$ times a better and only $\algoLosesPlainNconssFifty$ time a worse dual bound than
\algoPlain{}.  For these $\algoGroupSizeNconssFifty$ instances, \algoPlain{} closes
$\floatsub{100.0}{\algoRelgapPlainNconssFifty}$\% less gap than \algoDefault{}. Interestingly,
Figure~\ref{fig:algo:scatter} shows that there are instances for which \algoPlain{} could not close any gap but
\algoDefault{} could. There is no instance for which the opposite is true.

Next, we analyze which components of the Benders algorithm are responsible for the significantly better performance of
\algoDefault{} compared to \algoPlain{}.  Table~\ref{table:algo:gapclosed} shows that \algoDefault{} dominates
\algoNoTR{}, \algoNoSupp{}, and \algoNoEarly{} with respect to the average gap closed and the difference between the
number of wins and the number of losses on each subset of the instances.
The most important component is the early termination of the master problem. By disabling this feature, the Benders
algorithm closes $\floatsub{100.0}{\algoRelgapNoearlyAll}$\% less gap on all instances and even
$\floatsub{100.0}{\algoRelgapNoearlyNconssFifty}$\% on those which have at least $50$ nonlinear constraints.

Even though Table~\ref{table:algo:gapclosed} suggests that the trust-region and support stabilization are not crucial
for closing a significant portion of the root gap, both techniques are important to exploit the $\lambda$ space in a
more structured way.
Once an improving $\lambda$ vector is found, it is likely that there are even better vectors in its neighborhood. The
proposed stabilization methods help us to explore this neighborhood and to converge to a local optimum. Overall, this
helps us to find better aggregation vectors faster.
To visualize this, we use the instance \texttt{genpooling\_lee1} from Example~\ref{ex:gbenders:example}.
Figure~\ref{fig:algo:genpooling_lee1} shows the achieved dual bounds and the sparsity pattern of the $\lambda$ vector in
each iteration of the Benders algorithm for \algoDefault{} and \algoNoTR{}. Both settings run with an iteration limit
of~$600$.

First, we observe that the achieved dual bound of $-4775.26$ with \algoDefault{} is significantly better than the dual
bound of $-5006.95$ when using \algoNoTR{}. The best dual bound is found after $97$ iterations by \algoDefault{} and
after $494$ iterations with \algoNoTR{}. To understand this behavior, we analyze the computed aggregation vectors in
each iteration. After \algoDefault{} finds an aggregation that improves the dual bound, it fixes the support of the
aggregation vector and tries to improve the dual bound by finding a better aggregation vector for that fixed
support. This happens at the beginning of the solving process and after iteration $62$, which is visible in the bottom
left plot of Figure~\ref{fig:algo:genpooling_lee1}. After iteration $112$ the algorithm removed the trust region and
support fixation and no further dual bound improvement could be found. Due to the nature of the Benders algorithm, the
algorithm frequently oscillates in the $\lambda$ space if no stabilization is used. This is displayed in the ``noisy''
parts of the $\lambda$ plots of \algoDefault{} and \algoNoTR{} in Figure~\ref{fig:algo:genpooling_lee1}.
In the iterations where no stabilization is used by \algoDefault{}, we do not observe any pattern indicating which of
the two settings finds a better dual bound ---the behavior seems rather random. This type of randomness and the large
time limit used explain the similar results for the achieved gap closed values for \algoDefault{} and \algoNoTR{} that
are reported in Table~\ref{table:algo:gapclosed}.
The important observation is that using the presented stabilization methods allows us to reach the final dual bound much
faster than without using stabilization.

\begin{table}[tb]
  \centering
  \footnotesize
  \begin{tabular*}{1.0\textwidth}{@{\extracolsep{\fill}}rr|rrcrrcrrcrrc}
   \toprule
   & & \multicolumn{3}{c}{\algoPlain} & \multicolumn{3}{c}{\algoNoTR} & \multicolumn{3}{c}{\algoNoSupp} & \multicolumn{3}{c}{\algoNoEarly} \\
   group & $|P|$ & \colwins & \colloses & \colrgapclosed & \colwins & \colloses & \colrgapclosed & \colwins & \colloses & \colrgapclosed & \colwins & \colloses & \colrgapclosed \\
   \midrule
   \texttt{ALL}              & $\algoGroupSizeAll$          & $\algoWinsPlainAll$          & $\algoLosesPlainAll$          & $\algoRelgapPlainAll$          & $\algoWinsNotrustAll$          & $\algoLosesNotrustAll$          & $\algoRelgapNotrustAll$          & $\algoWinsNosuppAll$          & $\algoLosesNosuppAll$          & $\algoRelgapNosuppAll$          & $\algoWinsNoearlyAll$          & $\algoLosesNoearlyAll$          & $\algoRelgapNoearlyAll$\\
   \texttt{$\nnlconss \ge 10$} & $\algoGroupSizeNconssTen$    & $\algoWinsPlainNconssTen$    & $\algoLosesPlainNconssTen$    & $\algoRelgapPlainNconssTen$    & $\algoWinsNotrustNconssTen$    & $\algoLosesNotrustNconssTen$    & $\algoRelgapNotrustNconssTen$    & $\algoWinsNosuppNconssTen$    & $\algoLosesNosuppNconssTen$    & $\algoRelgapNosuppNconssTen$    & $\algoWinsNoearlyNconssTen$    & $\algoLosesNoearlyNconssTen$    & $\algoRelgapNoearlyNconssTen$\\
   \texttt{$\nnlconss \ge 20$} & $\algoGroupSizeNconssTwenty$ & $\algoWinsPlainNconssTwenty$ & $\algoLosesPlainNconssTwenty$ & $\algoRelgapPlainNconssTwenty$ & $\algoWinsNotrustNconssTwenty$ & $\algoLosesNotrustNconssTwenty$ & $\algoRelgapNotrustNconssTwenty$ & $\algoWinsNosuppNconssTwenty$ & $\algoLosesNosuppNconssTwenty$ & $\algoRelgapNosuppNconssTwenty$ & $\algoWinsNoearlyNconssTwenty$ & $\algoLosesNoearlyNconssTwenty$ & $\algoRelgapNoearlyNconssTwenty$\\
   \texttt{$\nnlconss \ge 50$} & $\algoGroupSizeNconssFifty$  & $\algoWinsPlainNconssFifty$  & $\algoLosesPlainNconssFifty$  & $\algoRelgapPlainNconssFifty$  & $\algoWinsNotrustNconssFifty$  & $\algoLosesNotrustNconssFifty$  & $\algoRelgapNotrustNconssFifty$  & $\algoWinsNosuppNconssFifty$  & $\algoLosesNosuppNconssFifty$  & $\algoRelgapNosuppNconssFifty$  & $\algoWinsNoearlyNconssFifty$  & $\algoLosesNoearlyNconssFifty$  & $\algoRelgapNoearlyNconssFifty$\\
   \bottomrule
  \end{tabular*}
  \caption{Table shows aggregated results for the \experimentAlgo{} experiment. The column ``\colwins{}''/``\colloses{}'' reports the
    number of instances for which \algoDefault{} could close at least 1\% more/less root gap than the settings of
    the corresponding column. Column ``\colrgapclosed{}'' reports the average root gap closed relative to our default
    settings (in \%). Instances for which no setting could close at least 1\% of the root gap are filtered out.}
  \label{table:algo:gapclosed}
 \end{table}

\begin{figure}[tb]
 \centering
 \begin{minipage}{0.40\textwidth}
  \centering
  \includegraphics[width=\textwidth]{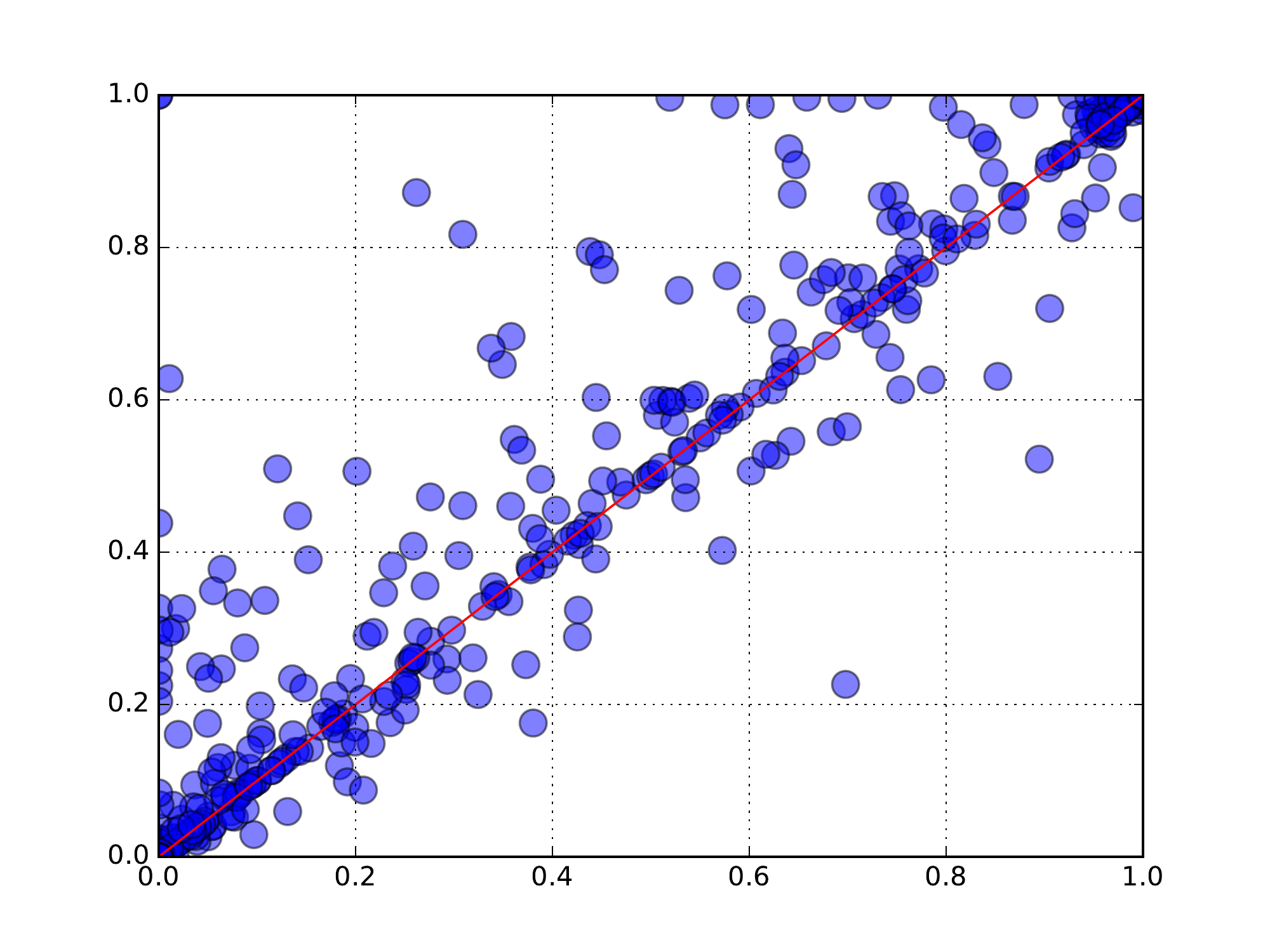}
  \subcaption{\algoPlain}
 \end{minipage}
 \begin{minipage}{0.40\textwidth}
  \centering
  \includegraphics[width=\textwidth]{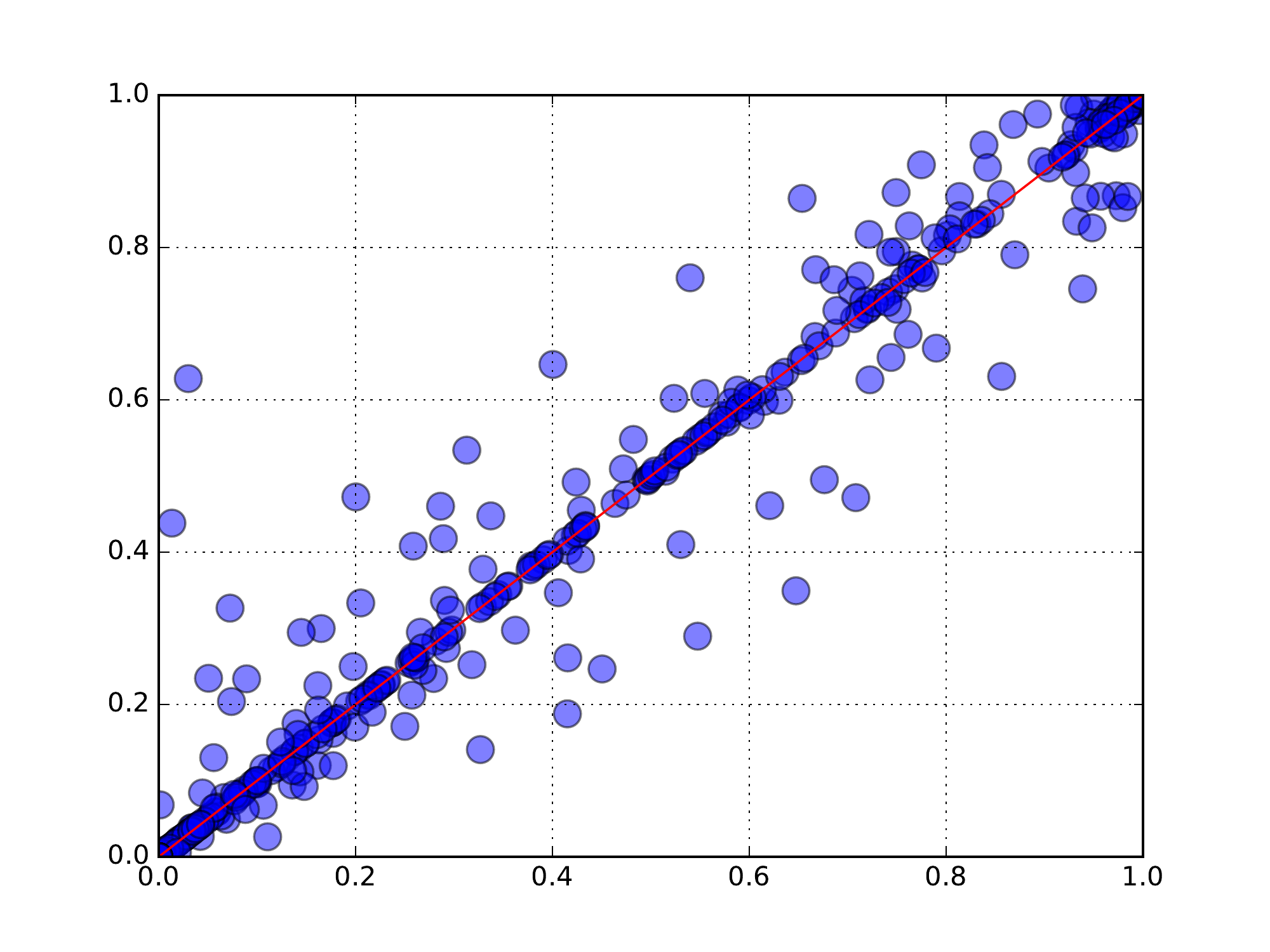}
  \subcaption{\algoNoTR}
 \end{minipage}

 \begin{minipage}{0.40\textwidth}
  \centering
  \includegraphics[width=\textwidth]{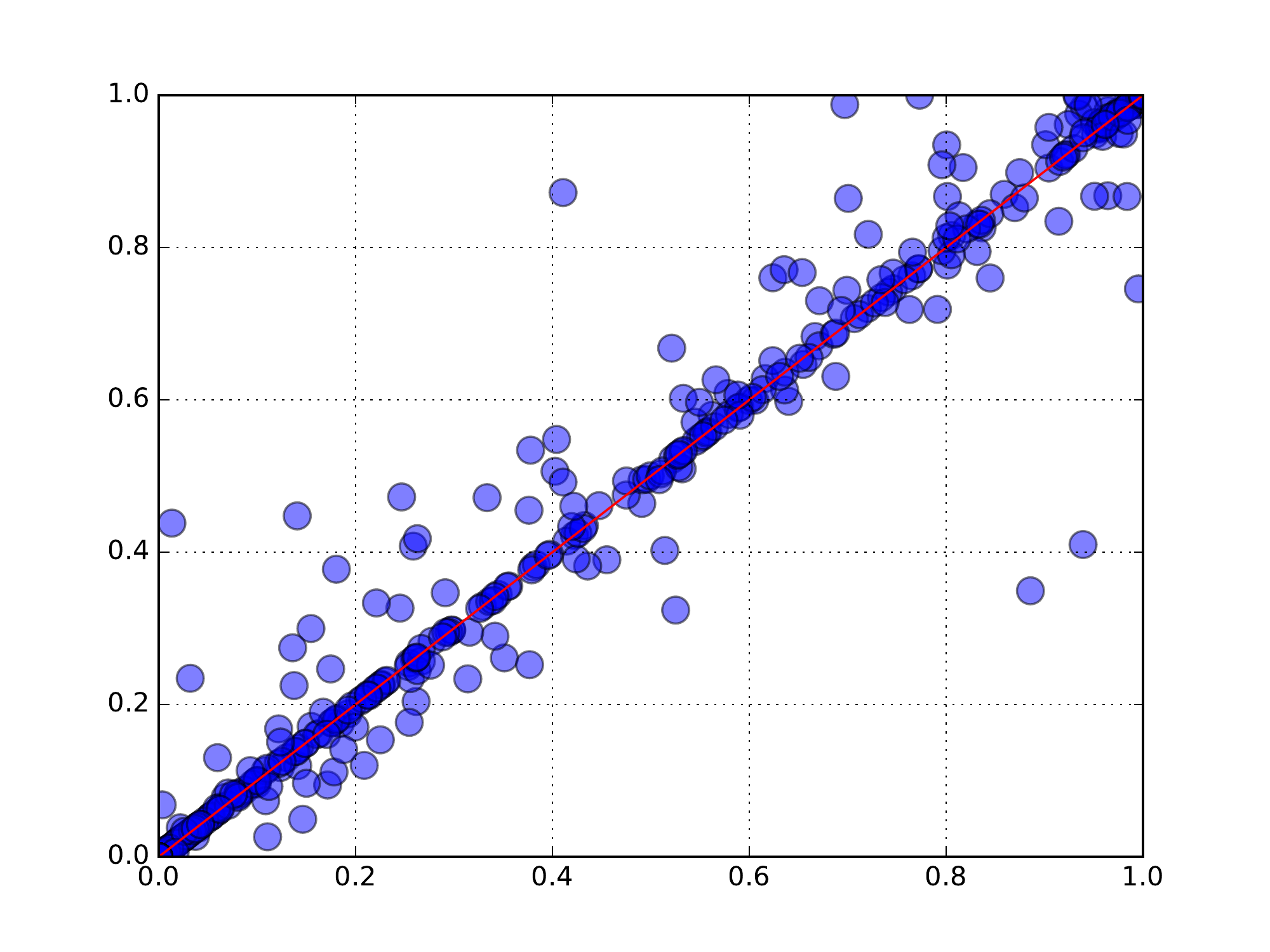}
  \subcaption{\algoNoSupp}
 \end{minipage}
 \begin{minipage}{0.40\textwidth}
  \centering
  \includegraphics[width=\textwidth]{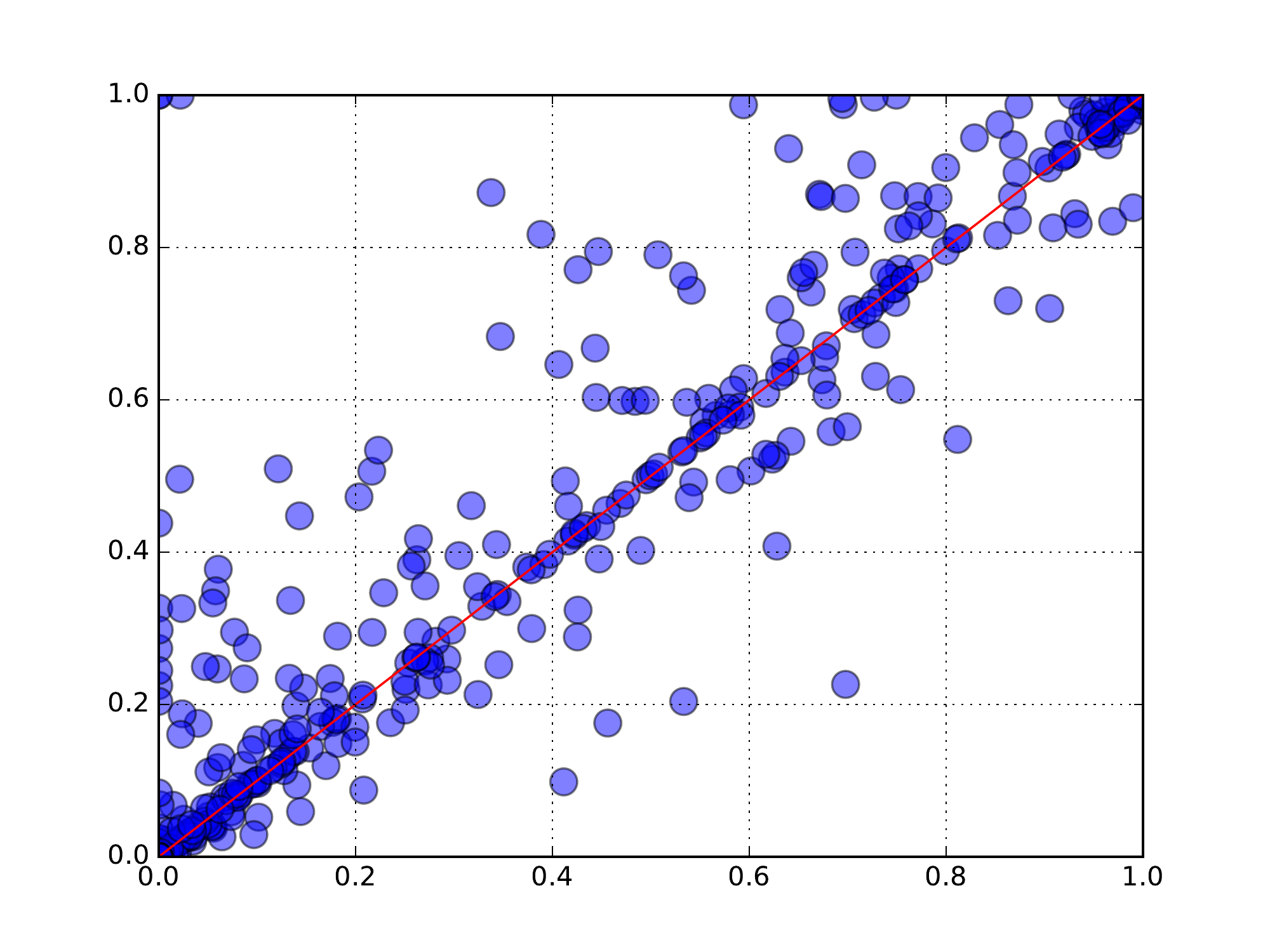}
  \subcaption{\algoNoEarly}
 \end{minipage}
 \caption{Scatter plots comparing the achieved root gap closed values for different settings of the Benders
   algorithm. The y-axis reports the gap closed values for \algoDefault{} and the x-axis the gap closed values
   for the mentioned setting.}
 \label{fig:algo:scatter}
\end{figure}

\begin{figure}[tb]
  \centering
  \begin{minipage}{0.48\textwidth}
    \centering \includegraphics[width=\textwidth]{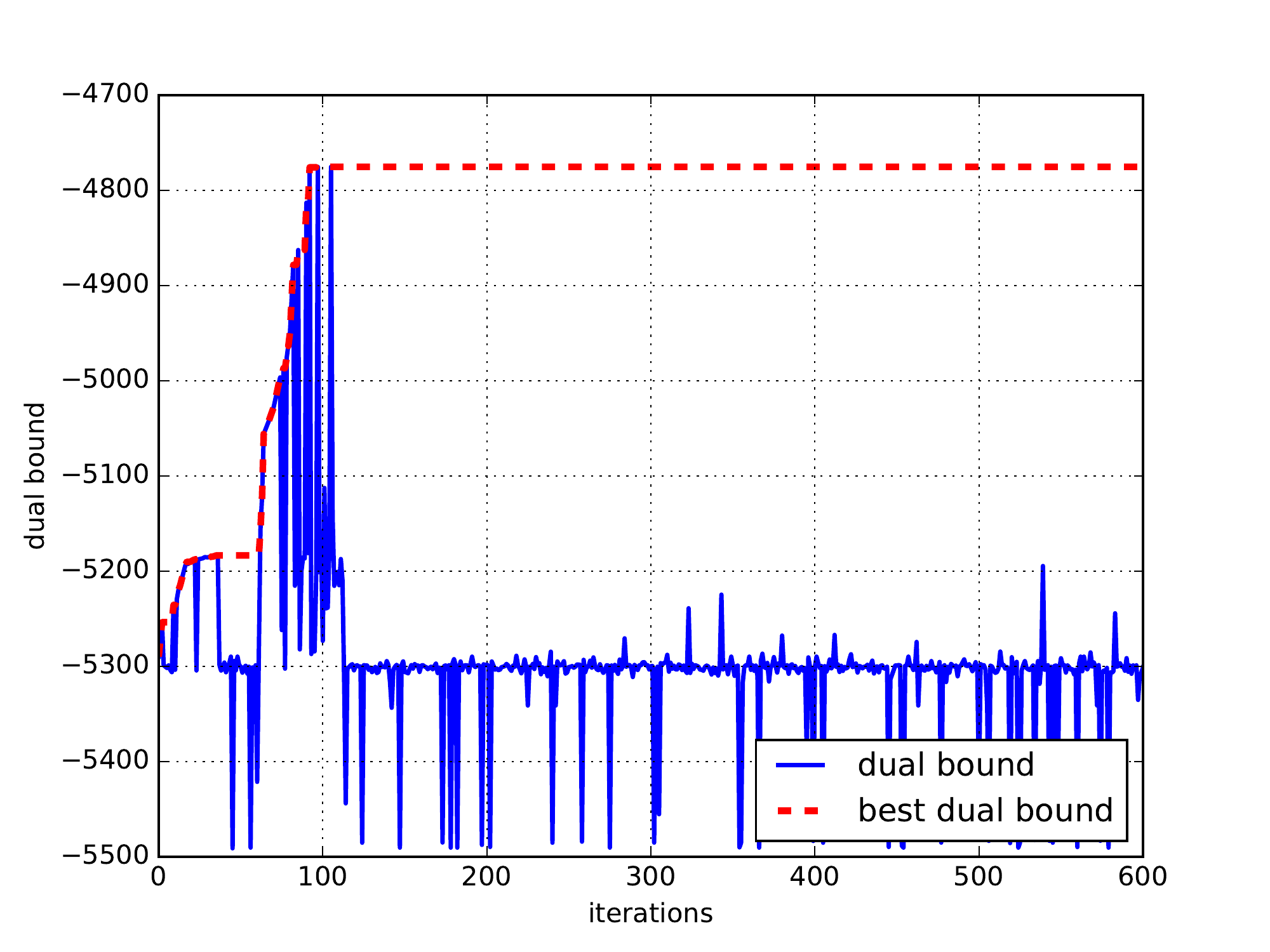}
  \end{minipage}
  \begin{minipage}{0.48\textwidth}
    \centering \includegraphics[width=\textwidth]{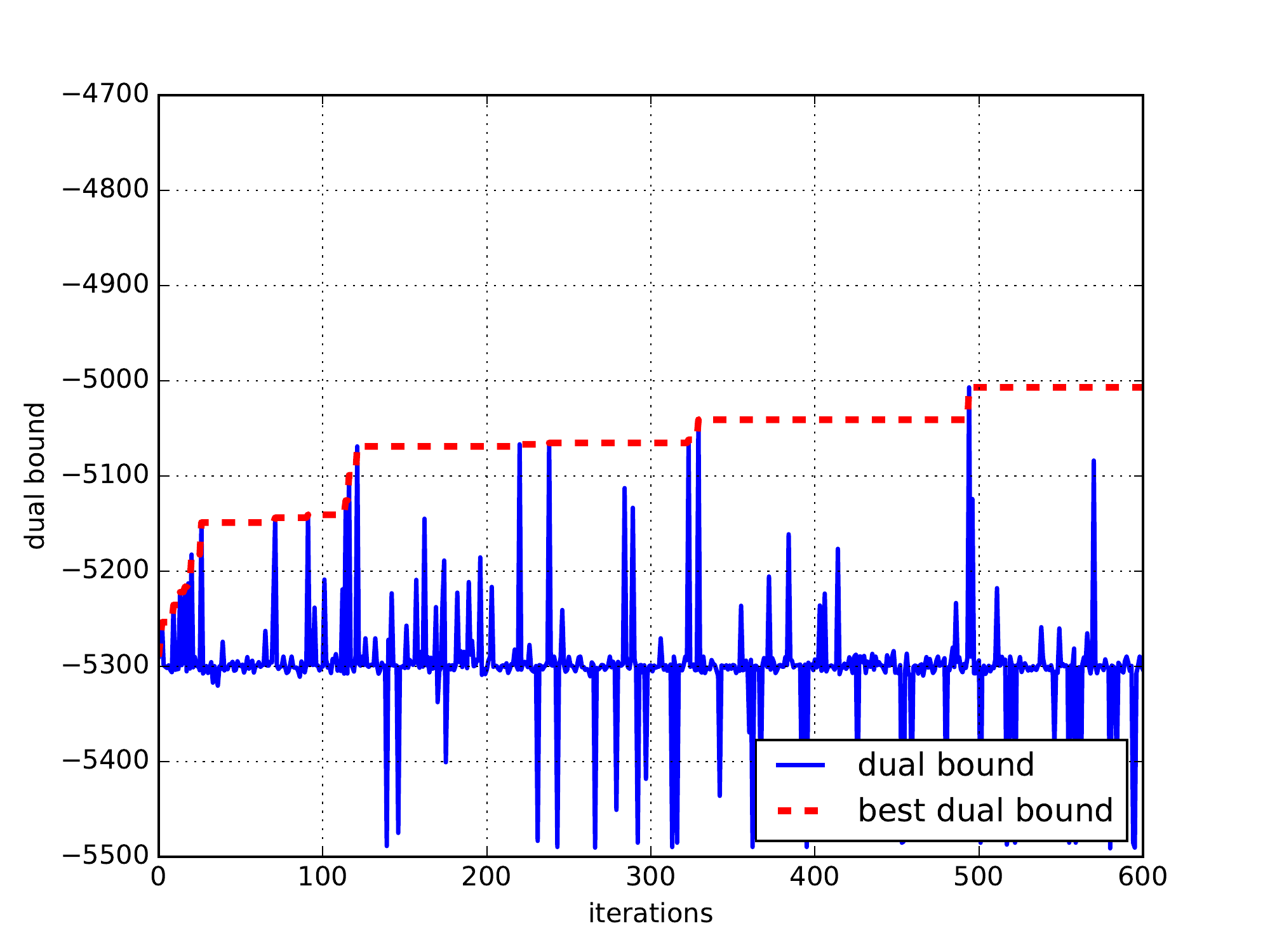}
  \end{minipage}

  \begin{minipage}{0.48\textwidth}
    \centering \includegraphics[width=0.85\textwidth]{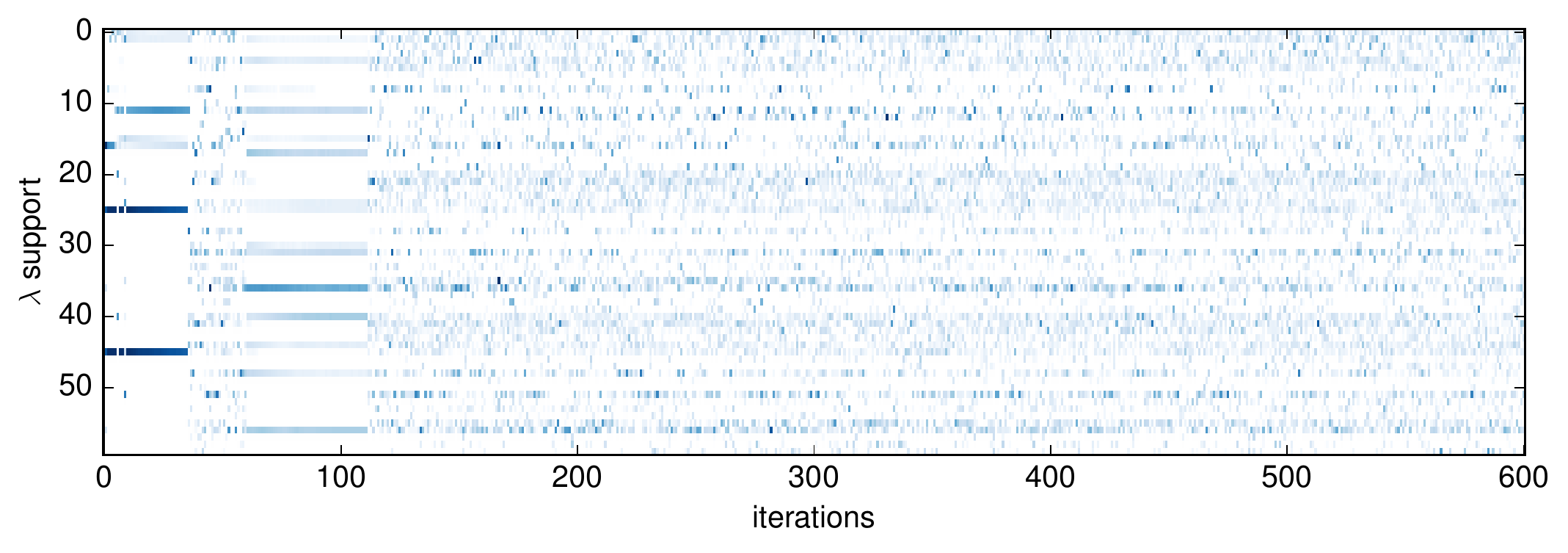}
  \end{minipage}
  \begin{minipage}{0.48\textwidth}
    \centering \includegraphics[width=0.85\textwidth]{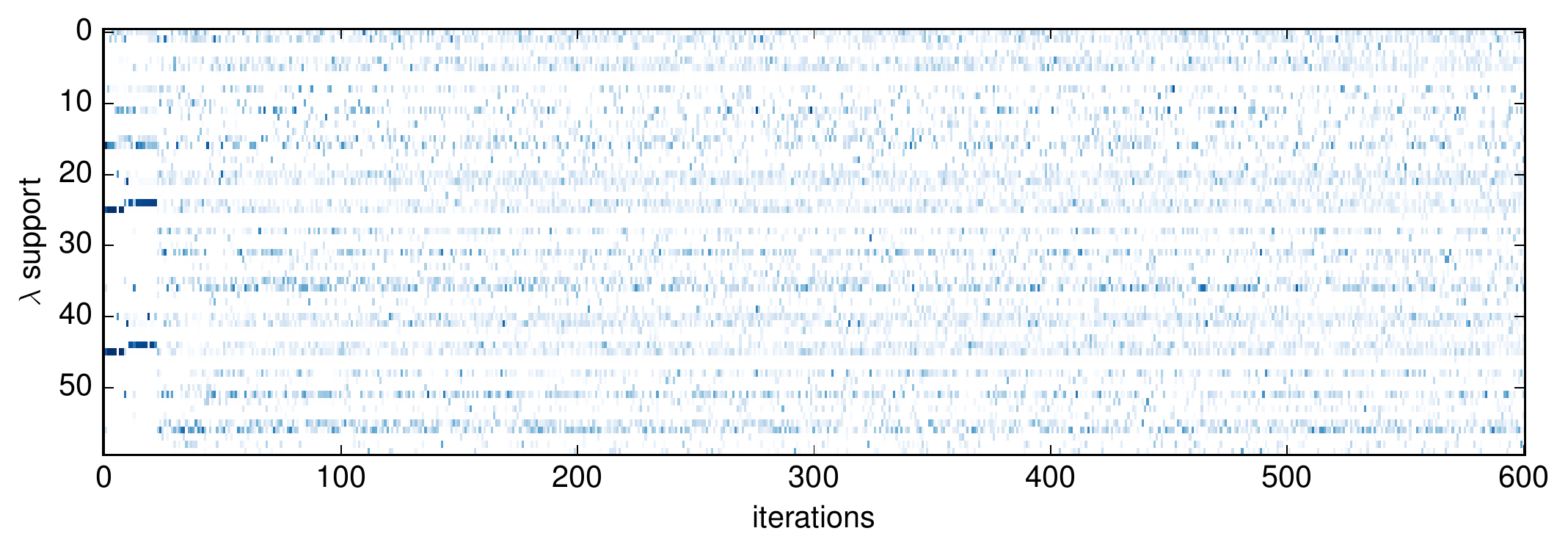}
  \end{minipage}
  \caption{Comparison of the achieved dual bounds and the obtained aggregation vectors during Algorithm~\ref{algo:genbenders}
    for \algoDefault{} (left) and \algoNoTR{} (right) on \texttt{genpooling\_lee1} using $K=3$. The red dashed
    curve shows the best found dual bound so far, whereas the blue curve shows the computed dual bound in every
    iteration. The pictures at the bottom visualize the $\lambda$ vector. White means that an entry $\lambda_i$ is zero
    and blue means that it is one.}
  \label{fig:algo:genpooling_lee1}
\end{figure}

\paragraph{\experimentDual{} Experiment.}

For this experiment, we include all instances which could not be solved by \scip with default settings within three
hours, have a final gap of at least ten percent, terminate without an error, and contain at least four nonlinear
constraints. To compute gaps we use the best known primal bounds from the \minlplib as reference values. This leaves in
total~$\treeExpNinstances$ instances for the \experimentDual{} experiment.
Table~\ref{table:algo:detailed} in the appendix reports detailed results on the subset of instances for which the
Algorithm~\ref{algo:genbenders} was able to improve on the bound obtained by \scip with default settings, which was the case for
$\treeExpNinstancesBetter$ of the~$\treeExpNinstances$ instances. On these instances, the average gap of
$\treeExpAvgGapSCIP$\% for \scip with default settings could be reduced to an average gap of $\treeExpAvgGapBenders$\%.

Two interesting subsets of instances are the \texttt{rsyn*} and \texttt{syn*} instances. These instances contain a large
number of integer variables and linear constraints but all nonlinear constraints are convex. Note that in this case the
aggregation constraints remain convex and thus each sub-problem in the Algorithm~\ref{algo:genbenders} is a convex
optimization problem with integrality constraints. The advantage of using a surrogate relaxation for these problems is
that such relaxation is able to capture the important nonlinear structure of the problem using a small \emph{convex}
problem that can be solved substantially faster with branch and bound. This explains the better dual bounds compared to
default \scip after three hours.

Algorithm~\ref{algo:genbenders} computes strong dual bounds on difficult nonconvex \MINLPs: For example, for all
\texttt{polygon*} instances and four \texttt{facloc*} instances, we find better bounds than the reported best known dual bounds from the
\minlplib, as shown in Table~\ref{table:algo:betterdbs}.

\begin{table}[ht]
  \centering
  \begin{tabular*}{1.0\textwidth}{@{\extracolsep{\fill}}lrrrr}
    \toprule
    instance & \colprimal & \coldb{} (\minlplib) & \coldb{} (\scip) & \experimentDual{} \\
    \midrule
    \texttt{polygon25}  & $-0.78$ &  $-5.80$ &  $-4.24$ & $-3.94$  \\
    \texttt{polygon50}  & $-0.78$ & $-15.27$ & $-10.78$ & $-8.72$  \\
    \texttt{polygon75}  & $-0.78$ & $-24.87$ & $-16.82$ & $-13.55$ \\
    \texttt{polygon100} & $-0.78$ & $-34.00$ & $-24.37$ & $-19.03$ \\
    \texttt{facloc1\_3\_95} & $12.30$ & $4.46$ & $5.50$ & $5.70$ \\
    \texttt{facloc1\_4\_80} &  $7.88$ & $0.16$ & $0.09$ & $0.41$ \\
    \texttt{facloc1\_4\_90} & $10.46$ & $0.48$ & $0.49$ & $1.18$ \\
    \texttt{facloc1\_4\_95} & $11.18$ & $0.79$ & $1.40$ & $2.40$ \\
    \bottomrule
  \end{tabular*}
  \caption{Comparison between the dual bounds computed by \scip, Algorithm~\ref{algo:genbenders}, and the best known dual
    bounds reported in the \minlplib for all \texttt{polygon*} and four \texttt{facloc*} instances.}
  \label{table:algo:betterdbs}
\end{table}

In general, we have observed the following behavior of Algorithm~\ref{algo:genbenders}. Due to the target dual bound,
the first iterations are processed quickly because the master problem~\eqref{eq:gbenders:master:milp} is easy to solve
and \scip rapidly finds feasible solutions for the sub-problems, which trigger the early termination criterion. After
this first phase, the Benders algorithm finds a promising aggregation vector, i.e., \scip does not find a feasible
solution with an objective value below the target dual bound.
Interestingly, we observe that $\gsfct{K}(\lambda)$ cannot be solved to global optimality within the time limit for most
of the~$\treeExpNinstances$ instances of the \experimentDual{}. However, the dual bound obtained by optimizing
$\gsfct{K}(\lambda)$ is often significantly better than the dual bound obtained by optimizing~\eqref{eq:minlp} when
using the same working limits.

\section{Surrogate duality during the tree search}
\label{section:bandb}

In the previous sections, we focused on developing computational techniques that can improve the performance of a dual
bounding procedure based on surrogate duality. While the obtained dual bounds are strong, in general complex instances
will still require branching in order to solve them to provable optimality. Additionally, even though the presented
computational techniques improve the running time of Algorithm~\ref{algo:genbenders}, it is still too costly to be
used in every node of a branch-and-bound tree.

In this section, we present a technique that incorporates Algorithm~\ref{algo:genbenders} into spatial branch and
bound. The technique focuses on extracting information of a \emph{single} execution of Algorithm~\ref{algo:genbenders}
in the root node, and reuses this information during spatial branch and bound.

Let $\Lambda := \{\lambda_1, \ldots, \lambda_L\} \subseteq \R^{K \nnlconss}$ be the set of aggregation vectors that
have been computed during Algorithm~\ref{algo:genbenders} in the root node of the branch-and-bound tree. We consider
only those aggregations that imply a tighter dual bound than the \MILP relaxation. Instead of using the generalized
Benders algorithm in a local node $v$, we select the most promising aggregation vector $\lambda$ from $\Lambda$ and
solve $\gsfct{K}_v(\lambda)$, which is equal to $\gsfct{K}(\lambda)$ except that the global linear relaxation is
replaced with a linear relaxation that is only locally valid in $v$.

We propose the following procedure. If $\gsfct{K}_v(\lambda)$ results in a better dual bound than the local \MILP
relaxation, i.e., $\gsfct{K}_v(0)$, then we skip the remaining aggregation vectors in $\Lambda$ and continue with the
tree search. If the dual bound does not improve, then we discard $\lambda$ in the sub-tree with root $v$. The intuition
behind discarding aggregations as we search down the tree is twofold. First, since the aggregations are computed in the
root node, their ability to provide good dual bounds is expected to deteriorate with the increasing depth of an explored
node. Second, we would like to alleviate the computational load of checking for too many aggregations as the
branch-and-bound tree-size increases. The idea is stated in Algorithm~\ref{algo:localapprox}.

\begin{algorithm}[tb]
  \caption{Surrogate approximation}
  \label{algo:localapprox}
  \begin{algorithmic}[1]
    \REQUIRE{node $v$, parent node $p$, parent aggregation candidates $C_{p} \subseteq \Lambda$}
    \ENSURE{$D \in \R$ valid dual bound for $v$, aggregation candidates $C_{v} \subseteq C_{p}$}
    \STATE{initialize $D \leftarrow \gsfct{K}_v(0)$ \label{algo:localapprox:milprelax}}
    \STATE{initialize $C_{v} \leftarrow C_{p}$}
    \FOR{$\lambda \in C_{v}$}
      \IF{$D < \gsfct{K}_v(\lambda)$}
        \RETURN{$(\gsfct{K}_v(\lambda),C_v)$ \label{algo:localapprox:found}}
      \ELSE
        \STATE{$C_v \leftarrow C_v \, \backslash \, \{\lambda\}$ \label{algo:localapprox:filter}}
      \ENDIF
    \ENDFOR
    \RETURN{$(D,\emptyset)$ \label{algo:localapprox:end}}
  \end{algorithmic}
 \end{algorithm}

\begin{figure}[tb]
  \centering
  \includegraphics[width=1.0\textwidth]{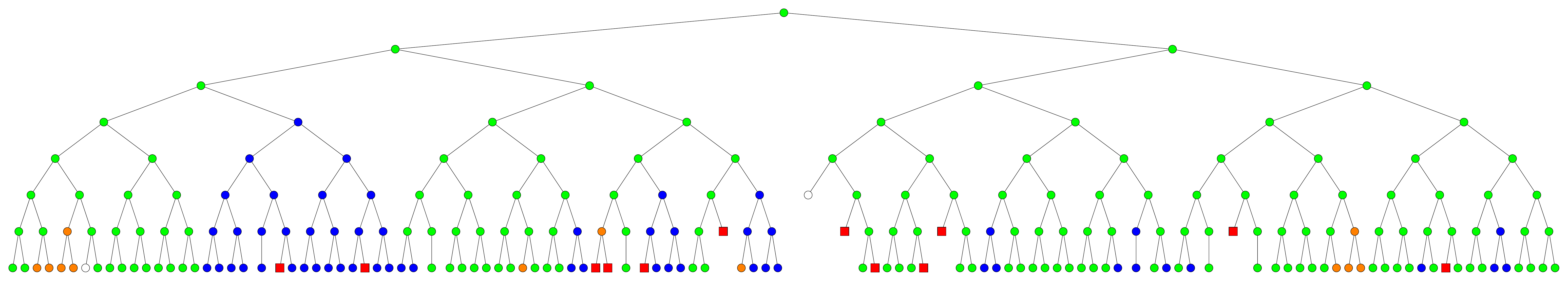}
  \caption{A visualization of Algorithm~\ref{algo:localapprox} for the instance \texttt{himmel16} of the \minlplib. The
    size of the aggregation pool has been limited to three and we used bread-first-search as the node selection
    strategy. The colors determine $|C_v|$ at each node $v$: green for three, blue for two, orange for one, white for
    zero aggregations. Red square shaped nodes could be pruned by Algorithm~\ref{algo:localapprox}, i.e., the proven
    dual bound exceeds the value of an incumbent solution.}
  \label{fig:localapprox:himmel16}
\end{figure}

Let $C_v \subset \Lambda$ denote the candidate aggregations in node $v$. The algorithm assumes $C_r = \Lambda$ for the
root node $r$. First, the value of the \MILP relaxation of node $p$ is computed in
Step~\ref{algo:localapprox:milprelax}. Each candidate aggregation $\lambda \in C_p$ is used to compute a tighter bound
for $v$. If $\gsfct{K}_v(\lambda)$ improves upon the \MILP relaxation, then the algorithm terminates in
Step~\ref{algo:localapprox:found}. Otherwise, $\lambda$ is discarded from the set of candidates in $v$ (see
Step~\ref{algo:localapprox:filter}).  In case no $\lambda \in C_p$ leads to a tighter dual bound, the algorithm returns
in Step~\ref{algo:localapprox:end} the value of the \MILP relaxation and the empty set as the set of aggregation
candidates for $v$.

As illustrated for the instance \texttt{himmel16} in Figure~\ref{fig:localapprox:himmel16},
Algorithm~\ref{algo:localapprox} might lead to stronger dual bounds in local nodes of the branch-and-bound tree, which
could result in a smaller tree. However, for the challenging instances of the \experimentDual{} experiment we observed
that solving $\gsfct{K}_v(\lambda)$ is too costly and almost always runs into the time limit. In these cases, we cannot
improve the dual bound.
An exception is instance \texttt{multiplants\_mtg1c}. The instance contains $28$ nonlinear constraints, $193$ continuous
variables, and $104$ integer variables. \scip with default settings proves a dual bound of $4096.04$, which is improved
by Algorithm~\ref{algo:genbenders} to $3161.13$. Algorithm~\ref{algo:localapprox} can further improve the dual bound
to $2935.58$ in the beginning of the search tree. Overall, there were only seven nodes for which
Algorithm~\ref{algo:localapprox} could improve a local dual bound. Afterward, all aggregation candidates have been
filtered out and \scip processed $491860$ nodes in total. During the exploration of these nodes, \scip could not improve
the dual bound further.

\section{Conclusion}
\label{section:conclusion}

In this article, we studied theoretical and computational aspects of surrogate relaxations for \MINLPs. We developed
the first algorithm to solve a generalization of the surrogate dual problem that allows multiple aggregations of nonlinear
constraints.
To this end, we adapted a Benders-type algorithm for solving the classic surrogate dual problem to solve its generalization and proved that the algorithm
always converges. Besides computational enhancements for solving the classic and generalized surrogate dual problem, we discussed
how to exploit surrogate duality in a spatial branch-and-bound solver to obtain strong dual bounds for difficult
nonconvex \MINLPs.

Our extensive computational study on the heterogeneous set of publicly available instances of the \minlplib, which used an
implementation in the \MINLP solver \scip, showed that exploiting surrogate duality can lead to significantly better dual
bounds than using \scip with default settings. Concretely, solving the classic surrogate dual problem led to a root gap reduction of $\rootGapAllKOne$\% on all
$\rootAllSize$ instances and $\rootGapAffectedKOne$\% on $\rootAffectedSize$ affected instances. The presented
generalization of surrogate duality reduced the root gap further, namely by $\rootGapAllKThree$\% on all
instances and by $\rootGapAffectedKThree$\% on the affected instances.
Additionally, our experiments showed that the presented computational enhancements are important to obtain good dual bounds
for problems with a large number of nonlinear constraints. On the
$\algoGroupSizeNconssFifty$ instances with at least $50$ nonlinear constraints, our implementation of the generalized
Benders algorithm closed $\floatsub{100.0}{\algoRelgapPlainNconssFifty}$\% more root gap when our proposed trust region
stabilization, support stabilization, and early termination criterion are used.
Finally, our tree experiments showed that using the result of Algorithm~\ref{algo:genbenders} during the tree search can
lead to significantly better dual bounds than solving \MINLPs with standard spatial branch and bound. On very difficult
\MINLPs, we achieved an average gap reduction from $\treeExpAvgGapSCIP$\% to $\treeExpAvgGapBenders$\%.

Finally, we want to highlight two out of many open questions that remain related to generalized surrogate duality and its application in branch-and-bound solvers.
First, consider the case that each constraint of~\eqref{eq:minlp} is quadratic, i.e., $g_i(x) = x^\T Q_i x + q_i^\T x +
b_i$ for each $i \in \nlconssidx$.  Note that adding the constraints
\begin{equation*}
  \sum_{i \in \nlconssidx} \lambda^k_i Q_i \psd 0
\end{equation*}
for all $k \in \{1,\ldots,K\}$ to the master problem~\eqref{eq:gbenders:master:milp} enforces that each sub-problem is a
convex mixed-integer quadratically constrained program. This increases the complexity of the master problem but, at the
same time, reduces the complexity of the sub-problems. A computational study of surrogate relaxations using this
modification would be interesting on instances for which solving the sub-problems is currently too expensive.

Second, it remains an open question how a pure surrogate-based spatial branch-and-bound approach could perform in
practice. All generated points $\pointset \subseteq \minlprelaxset$ of a parent node can be used as an initial set of points
in a child node, which could be considered as a warm-start strategy. However, it is not clear how branching decisions
would affect the dual bounds obtained by solving a surrogate relaxation. Future work might design a branching rule that
tries to improve the dual bounds obtained by solving surrogate relaxations.
 
\section*{Acknowledgments}
This work has been supported by the Research Campus MODAL \emph{Mathematical
  Optimization and Data Analysis Laboratories} funded by the Federal Ministry of
Education and Research (BMBF Grant~05M14ZAM).  All responsibilty for the content
of this publication is assumed by the authors.
The described research activities are funded by the Federal Ministry for Economic Affairs
and Energy within the project EnBA-M (ID: 03ET1549D).
The authors thank the Schloss Dagstuhl – Leibniz Center for Informatics for hosting
the Seminar 18081 "Designing and Implementing Algorithms for Mixed-Integer Nonlinear
Optimization" for providing the environment to develop the ideas in this paper.
 
\bibliographystyle{spmpsci}
\bibliography{surrogate}

\section*{Appendix}
\begin{landscape}
    \scriptsize

\end{landscape} 
\end{document}